\providecommand{\href}[2]{#2}
\providecommand{\texorpdfstring}[2]{#1}
\providecommand*{\backref}{}
\providecommand*{\backrefalt}{}
\renewcommand*{\backref}[1]{}
\renewcommand*{\backrefalt}[4]{%
	\ifcase #1 %
	\or
	  Cited page~#2.
	\else
	  Cited pages~#2.
	\fi
}
\newcommand{\Pbb}{\mathbb{P}}
\newcommand{\given}{\mid}
\newcommand{\E}{\mathbb{E}}
\newcommand{\Z}{\mathbb{Z}}
\newcommand{\N}{\mathbb{N}}
\newcommand{\R}{\mathbb{R}}
\newcommand{\boF}{\mathcal{F}}
\newcommand{\boL}{\mathcal{L}}
\newcommand{\dd}{\mathop{}\!\mathrm{d}}
\DeclarePairedDelimiter{\abs}{\lvert}{\rvert}
\DeclarePairedDelimiter{\norm}{\lVert}{\rVert}
\newcommand{\st}{\::\:}
\DeclareMathOperator{\Lip}{Lip}
\DeclareMathOperator{\Card}{Card}
\DeclareMathOperator{\tail}{tail}
\newcommand{\restr}[1]{_{\lvert #1}}
\DeclareMathOperator{\sgn}{sgn}
\newcommand{\coloneqq}{\mathrel{\mathop:}=}
\renewcommand{\epsilon}{\varepsilon}
\renewcommand{\phi}{\varphi}
\renewcommand{\leq}{\leqslant}
\renewcommand{\geq}{\geqslant}
\newcommand{\ic}{\mathbf{i}}
\newtheorem{thm}{Theorem}[section]
\newtheorem{prop}[thm]{Proposition}
\newtheorem{lem}[thm]{Lemma}
\newtheorem{cor}[thm]{Corollary}
\newtheorem*{prop*}{Proposition}
\theoremstyle{definition}
\newtheorem{rmk}[thm]{Remark}
\numberwithin{equation}{section}
\title[Moment bounds and concentration inequalities]{Moment bounds and concentration inequalities for slowly mixing dynamical systems}
\author{S\'ebastien Gou\"ezel and Ian Melbourne}
\address{IRMAR, CNRS UMR 6625,
Universit\'e de Rennes 1, 35042 Rennes, France}
\email{sebastien.gouezel@univ-rennes1.fr}
\address{Mathematics Institute, University of Warwick, Coventry, CV4 7AL, UK}
\email{I.Melbourne@warwick.ac.uk}
\thanks{The research of IM was supported in part by the European Advanced Grant StochExtHomog (ERC AdG 320977).}
\date{September 16, 2014}
\begin{document}

\begin{abstract}
We obtain optimal moment bounds for Birkhoff sums, and optimal concentration
inequalities, for a large class of slowly mixing dynamical systems, including
those that admit anomalous diffusion in the form of a stable law or a
central limit theorem with nonstandard scaling $(n\log n)^{1/2}$.
\end{abstract}

\maketitle

\section{Statement of results}

Consider a dynamical system $T$ on a space $X$, preserving an ergodic
probability measure $\mu$. If $x$ is distributed according to $\mu$,
the process $x, Tx, T^2 x,\dotsc$ on $X^\N$ is stationary, with
distribution $\mu\otimes \delta_{Tx} \otimes \delta_{T^2 x}\otimes
\dotsm$ (equivalently, one considers a Markov chain on $X$, with
stationary measure $\mu$, for which the transitions from $x$ to $Tx$ are
deterministic). In particular, if $f$ is a
real-valued function on $X$, the real process $f(x), f(Tx),\dotsc$ is
also stationary. We would like to understand to what extent these
processes behave like independent or weakly dependent processes:
Although they are purely deterministic once the starting point $x$ is
fixed, one expects a random-like behaviour if the map $T$ is
sufficiently chaotic and the observable $f$ is regular enough. In
such a situation, the Birkhoff sums $S_n f=\sum_{i=0}^{n-1}f\circ
T^i$ of H\"older continuous functions with zero average typically
satisfy the central limit theorem, and grow like $\sqrt{n}$. On the
other hand, the moments $\int \abs{S_n f}^p \dd\mu$ may grow faster
than $n^{p/2}$: it is possible that some subsets of $X$ with small
measure give a dominating contribution to those moments. Estimating
the precise growth rate is important from the point of view of large
deviations. It turns out that this precise growth rate depends on
finer characteristics of the system, and displays a transition at
some critical exponent $p^*$ directly related to the lack of uniform
hyperbolicity of the system.

The situation for uniformly expanding/hyperbolic (Axiom~A) systems is
easily described: all moments $\int \abs{S_n f}^p\dd\mu$ grow like
$n^{p/2}$ and moreover  $\int \abs{n^{-1/2}S_n f}^p\dd\mu$ converges
to the $p$'th moment of the limiting Gaussian in the central limit
theorem. \cite{MTorok12} showed that convergence of all moments holds
also for nonuniformly expanding/hyperbolic diffeomorphisms modelled
by Young towers with exponential tails~\cite{lsyoung_annals}.
However, it follows
from~\cite{melbourne_nicol_large_deviations,MTorok12} that the
situation is quite different for systems modelled by Young towers
with polynomial tails~\cite{lsyoung_recurrence}.

In this paper, we give optimal bounds for all moments of Birkhoff
sums (by optimal, we mean that we have upper and lower bounds of the
same order of magnitude), in the situation of Young towers. Many real
systems are quotients of such Young towers, hence our bounds apply to
such systems, including notably intermittent maps of the
interval~\cite{liverani_saussol_vaienti,pomeau_manneville}. See for
instance~\cite{melbourne_nicol_large_deviations} for a discussion of
such applications. Our techniques also give a generalization of
moment inequalities, to concentration inequalities
(see~\cite{gouezel_chazottes_concentration} for a discussion of
numerous applications of such bounds). By the methods
in~\cite{gouezel_chazottes_concentration,MTorok12}, all results
described here pass over to the situation of invertible systems and
flows; for brevity we present the results only for noninvertible
discrete time dynamics.

\medskip

We formulate our results in the abstract setting of Young
towers. To illustrate this setting, let us start with a more concrete
example, intermittent maps, i.e., maps of the interval which are
uniformly expanding away from an indifferent fixed point
(see~\cite{lsyoung_recurrence} for more details). For $\gamma \in
(0,1)$, consider for instance the corresponding
Liverani-Saussol-Vaienti map $T_\gamma:[0,1]\to [0,1]$ given by
  \begin{equation*}
  T_\gamma(x) =
  \begin{cases} x(1+2^\gamma x^\gamma) & \text{ if } x<1/2;
  \\
  2x-1 &\text{ if }x\geq 1/2.
  \end{cases}
  \end{equation*}
The first return map to the subinterval $Y=[1/2,1]$ is uniformly
expanding and Markov. Define a new space $X=\{(x,i) \st x\in [1/2,1],
i<\phi(x)\}$ where $\phi:Y\to \N^*$ is the first return time to $Y$,
i.e., $\phi(x)=\inf\{i>0 \st T_\gamma^i(x) \in Y\}$. On this new
space, we define a dynamics by $T(x,i)=(x,i+1)$ if $i+1<\phi(x)$, and
$T(x,\phi(x)-1)=(T_\gamma^{\phi(x)}(x),0)$. We think of $X$ as a
tower, where the dynamics $T$ is trivial when one climbs up while it
has a large expansion when one comes back to the bottom of the tower.
The point of this construction is that the combinatorics of $T$ are
simpler than those of the original map $T_\gamma$, while the
essential features of $T$ and $T_\gamma$ are the same. More
precisely, the two maps are semiconjugate: the projection $\pi:X\to
[0,1]$ given by $\pi(x,i)=T_\gamma^i(x)$ satisfies $T_\gamma\circ
\pi=\pi \circ T$. Hence, results for the decay of correlations, or
growth of moments, or concentration, for $T$ readily imply
corresponding results for $T_\gamma$. This situation is not specific
to the maps $T_\gamma$: many concrete maps can be modelled by
Young towers in the same way (although the Young tower is usually not
as explicit as in this particular example).

\medskip

Let us give a more formal definition. A Young tower is a space $X$
endowed with a partition $\bigcup_{\alpha} \bigcup_{0\leq i <
h_\alpha} \Delta_{\alpha,i}$ (where $\alpha$ belongs to some
countable set, and $h_\alpha$ are positive integers), a probability
measure $\mu$ and a map $T$ preserving $\mu$. The dynamics $T$ maps
bijectively $\Delta_{\alpha,i}$ to $\Delta_{\alpha, i+1}$ for
$i+1<h_\alpha$, and $\Delta_{\alpha, h_\alpha-1}$ to $\Delta_0
=\bigcup \Delta_{\alpha,0}$: the dynamics goes up while not at the
top of the tower, and then comes back surjectively to the basis. The
distance on $X$ is defined by $d(x,y)=\rho^{s(x,y)}$ where $\rho<1$
is fixed and $s(x,y)$, the separation time, is the number of returns
to the basis before the iterates of the points $x$ and $y$ are not in
the same element of the partition. Finally, we require a technical
distortion condition: Denoting by $g(x)$ the inverse of the jacobian
of $T$ for the measure $\mu$, we assume that $\abs{\log g(x) -\log
g(y)} \leq C d(Tx, Ty)$ for all $x,y$ in the same partition element.

With the distance $d$, the map $T$ is an isometry while going up the
tower, and expands by a factor $\rho^{-1}>1$ when going back to the
basis: it is non-uniformly expanding, the time to wait before seeing
the expansion being large on points in $\Delta_{\alpha,0}$ with
$h_\alpha$ large. In particular, denoting by $\phi(x)$ the return
time to the basis, the quantities
  \begin{equation*}
  \tail_n = \mu\{x\in \Delta_0 \st \phi(x) \geq n\} = \mu\left(\bigcup_{h_\alpha\geq n} \Delta_{\alpha,0}\right),
  \end{equation*}
called the \emph{tails of the return time}, dictate the statistical
properties of the transformation $T$. By Kac's Formula, $\tail_n$ is
summable since $\mu$ is finite by assumption. Various kinds of
behaviour can happen for $\tail_n$. For instance, in the case of
intermittent maps of parameter $\gamma\in (0,1)$, one has $\tail_n
\sim C/n^{1/\gamma}$. In general, if $\tail_n=O(n^{-q})$ for some
$q>1$, then Lipschitz functions mix at a speed $O(n^{-(q-1)})$
by~\cite{lsyoung_recurrence}, and this speed is optimal,
see~\cite{sarig_decay} and~\cite{gouezel_decay}. If $q>2$, then
$n^{-1/2}S_nf$ converges in distribution to a Gaussian, and the
variance is nonzero provided $f$ is not a coboundary. (More
generally, for convergence to a Gaussian it suffices that the return
time function $\phi$ is square-integrable, i.e., $n\tail_n$ is
summable.) When $q\in(1,2]$, more precise information is required on
$\tail_n$, leading to the following result.
\begin{thm}[\cite{gouezel_stable}]
\label{thm:stable}
Consider a Young tower with $\tail_n \sim C n^{-q}$ for some $q>1$.
There is a sequence $a_n$, and a nonempty set $\mathcal{U}$ in the
space of Lipschitz functions $f:X\to\R$ with mean zero, such that the
following holds. For each $f\in\mathcal{U}$, there exists a
nondegenerate law $Z$ such that $a_n^{-1}S_nf\to_d Z$.  Moreover,
$a_n$ and $Z$ are given as follows:

\begin{tabular}{lll}
$q>2$: & $a_n=n^{1/2}$, & $Z$ is Gaussian. \\
$q=2$: & $a_n=(n\log n)^{1/2}$, & $Z$ is Gaussian. \\
$q\in(1,2)$: & $a_n=n^{1/q}$, & $Z$ is a stable law of index $q$.
\end{tabular}
\end{thm}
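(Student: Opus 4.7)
The plan is to transfer a limit theorem from the Gibbs--Markov first-return map on the base back up to the tower. First I would set $Y = \Delta_0$, $\mu_Y = \mu(Y)^{-1}\mu\restr Y$, and define the first-return map $F = T^\phi : Y \to Y$. The tower distortion assumption makes $(Y, F, \mu_Y)$ a Gibbs--Markov map with symbolic partition $\{\Delta_{\alpha,0}\}_\alpha$ and symbolic metric inherited from $d$. For a mean-zero Lipschitz $f : X \to \R$, the correct observable for the induced system is
\[
\tilde f(y) = \sum_{i=0}^{\phi(y)-1} f(T^i y),
\]
which has mean zero on $(Y, \mu_Y)$ and which encodes $T$-Birkhoff sums via $\sum_{k<N}\tilde f \circ F^k = S_{\phi_N} f$, where $\phi_N = \sum_{k<N}\phi\circ F^k$.

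Next I would analyse the tails of $\tilde f$. On the column $\Delta_{\alpha,0}$, the Lipschitz bound together with the geometric contraction along the tower gives $\tilde f(y) = h_\alpha c_\alpha + O(1)$ uniformly, where $c_\alpha = h_\alpha^{-1}\sum_{i<h_\alpha} f(T^i y_\alpha)$ is the column average along a reference trajectory. Hence $\mu_Y\{\abs{\tilde f} > t\}$ is comparable to the total mass of columns with $h_\alpha \abs{c_\alpha} > t$, and as soon as $(c_\alpha)$ stays bounded away from zero along a subfamily of columns carrying total mass of order $\tail_n \sim C n^{-q}$, the tail of $\tilde f$ is regularly varying of index $q$ with a nontrivial coefficient. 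I would take $\mathcal{U}$ to be the set of Lipschitz mean-zero $f$ for which this holds, and verify $\mathcal{U}\neq\emptyset$ by an explicit construction: pick a nonzero constant on a patch near the top of a dominant subsequence of columns and correct the function elsewhere to have zero mean.

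With the tails of $\tilde f$ in hand, I would invoke the well-developed limit theory on the Gibbs--Markov map $(Y, F, \mu_Y)$. When $q > 2$, $\tilde f$ lies in $L^2(\mu_Y)$ and a Gordin martingale decomposition (equivalently, Nagaev--Guivarc'h perturbation of the transfer operator of $F$) yields a Gaussian CLT with normalization $N^{1/2}$; when $q \in (1,2)$, the Aaronson--Denker stable-law theorem for Gibbs--Markov systems applies directly and gives convergence to a stable law of index $q$ with normalization $N^{1/q}$; the boundary case $q=2$ requires an extra truncation and a Feller-type expansion of the characteristic function, producing a Gaussian limit under the non-standard normalization $(N\log N)^{1/2}$.

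To close, I would transfer the limit back to the tower. The Kac formula gives $\phi_N/N \to \mu(Y)^{-1}$, so setting $N \sim n\mu(Y)$ yields $S_n f \approx S_{\phi_N} f = \sum_{k<N}\tilde f \circ F^k$, and the constant factor $\mu(Y)^{1/2}$ or $\mu(Y)^{1/q}$ gets absorbed into the limit law. The hard part will be controlling the difference $S_n f - S_{\phi_N} f$: this consists of at most $\phi\circ F^N$ extra summands, a random number whose tail is itself regularly varying of index $q$, so in the critical cases $q\leq 2$ a crude bound is insufficient. A maximal inequality on the tower dominating $\sup_{0\leq j\leq \phi}\abs{S_j f}$ by (a constant times) $\phi\,\norm{f}_{\Lip}$, combined with either a truncation of the height or a tightness argument for the random time change, should be enough to conclude that this gap is negligible compared to $a_n$ in probability, and hence that $a_n^{-1}S_n f \to_d Z$.
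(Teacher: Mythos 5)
This theorem is only \emph{cited} in the paper (from \cite{gouezel_stable}); there is no proof of it here, so there is nothing to compare your proposal against directly. The closest material is Appendix~\ref{sec:lower_stable}, which proves a quantitative Berry--Esseen refinement of the $q\in(1,2)$ case for the single observable $f=1-1_Y/\mu(Y)$ using precisely the strategy you propose: Nagaev--Guivarc'h perturbation theory on the Gibbs--Markov base $(Y,F,\mu_Y)$, followed by a random-time-change (Melbourne--T\"or\"ok) argument to pass from $S^Y_Nf_Y$ back to $S_nf$. So your architecture is the right one.

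One step of your reasoning is wrong and should be repaired. You assert $\tilde f(y)=h_\alpha c_\alpha+O(1)$ ``by geometric contraction along the tower.'' There is no contraction while climbing: with the separation metric, $T$ is an \emph{isometry} away from the base, so $d(T^iy,T^iy')=d(y,y')$ for $0\le i<h_\alpha$ and the Lipschitz bound gives only $\abs{\tilde f(y)-\tilde f(y')}\le h_\alpha\Lip(f)\,d(y,y')$, i.e.\ $O(h_\alpha)$, not $O(1)$. What actually makes the tail of $\tilde f$ regularly varying of index $q$ is the defining property of $\mathcal{U}$: if $f$ converges to a nonzero constant $c^*$ as the height tends to infinity, then $f(T^iy)-f(T^iy')\to0$ uniformly as $i\to\infty$, so the intra-column variation is $o(h_\alpha)$ and $\tilde f\sim c^*\phi$ on tall columns, giving a genuinely one-sided regularly varying tail. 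Relatedly, ``$c_\alpha$ bounded away from zero along a dominant subfamily'' is too weak a characterization of $\mathcal{U}$: if the signs of $c_\alpha$ oscillate you can destroy regular variation of the one-sided tails, which is why the paper phrases $\mathcal{U}$ via convergence at high heights rather than nonvanishing of column averages. These are repairs to the justification rather than to the plan; the remaining sketch (von~Bahr--Esseen/Aaronson--Denker on the base, Kac and a maximal inequality to absorb the $S_nf-S^Y_{n\mu(Y)}f_Y$ gap) is the argument that actually carries the cited theorem, and matches the treatment in the appendix.
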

The set $\mathcal{U}$ is rather big: it contains for instance all the
functions that converge to a nonzero constant along points whose
height in the tower tends to infinity.


Lower bounds for the growth of moments are well-known
(see~\cite{melbourne_nicol_large_deviations}) and can be summarized
in the following proposition. We write $\norm{u}$ for the
Lipschitz norm of a function $u$, given by
  \begin{equation*}
  \norm{u} = \sup_{x} \abs{u(x)} + \sup_{x, y} \frac{\abs{u(x)-u(y)}}{d(x,y)},
  \end{equation*}
where the supremum in the second term is restricted to those $x$ and
$y$ that belong to the same partition element. Note that, changing
the parameter $\rho$ in the definition of the distance, H\"older
functions for the old distance become Lipschitz functions for the new
one. Hence, all results that are stated in this paper for Lipschitz
functions also apply to H\"older functions.
\begin{prop}
\label{prop:lower}
Consider a Young tower with $\tail_n \sim C n^{-q}$ for some $q>1$.
Then, for all $p\in [1,\infty)$, there exists $c>0$ such that for all
$n\geq 1$
  \begin{equation*}
  \sup_{\norm{f}\leq 1, \int f\dd\mu=0} \int \abs{S_n f}^p \dd\mu\geq
  \begin{cases}
    c\max(n^{p/2}, n^{p-q+1}) & \text{ if }q>2,
    \\
    c\max( (n\log n)^{p/2}, n^{p-q+1}) & \text{ if }q=2,
    \\
    c\max( n^{p/q}, n^{p-q+1}) & \text{ if }q<2.
  \end{cases}
  \end{equation*}
\end{prop}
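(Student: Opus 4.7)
The bound is a maximum of two terms, and I would obtain each by a separate choice of admissible test function, using that the sup over $\norm{f}\leq 1$, $\int f\dd\mu=0$ dominates the value attained by any individual admissible $f$. The first term---$n^{p/2}$, $(n\log n)^{p/2}$, or $n^{p/q}$ in the three regimes---reflects the distributional convergence provided by Theorem~\ref{thm:stable}, while the second term $n^{p-q+1}$ captures the heavy contribution of orbits which spend a long stretch on tall columns of the tower without returning to the base.

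For the first term, I would fix any $f_1\in\mathcal{U}$ (which is nonempty by the theorem) and rescale to $f=f_1/\norm{f_1}$, so that $\norm{f}\leq 1$ and $\int f\dd\mu=0$. By Theorem~\ref{thm:stable}, $a_n^{-1}S_nf\to_d Z'$, with $Z'=Z/\norm{f_1}$ still nondegenerate. In the stable regime $q<2$ the limit $Z'$ has infinite $p$-th moment for $p\geq q$, so one cannot pass directly to the limit of moments; instead I would truncate. Pick $M>0$ large enough that $\E[\min(\abs{Z'}^p,M)]>0$, which is possible by nondegeneracy. Then $\Phi_M(x)=\min(\abs{x}^p,M)$ is bounded and continuous, so by the Portmanteau theorem $\int\Phi_M(a_n^{-1}S_nf)\dd\mu\to\E[\Phi_M(Z')]>0$. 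Since $\abs{S_nf}^p\geq a_n^p\,\Phi_M(a_n^{-1}S_nf)$ pointwise, integrating gives $\int\abs{S_nf}^p\dd\mu\geq c\,a_n^p$ for all sufficiently large $n$, and finitely many small $n$ are absorbed into the constant.

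For the second term, I would use the explicit test function $f(y,i)=\mathbf{1}_{i\geq 1}-\mu(\{i\geq 1\})$, which is constant on each partition element $\Delta_{\alpha,i}$ and has mean zero. Since the Lipschitz seminorm only sees pairs in the same partition element and $f$ is constant on each, the seminorm vanishes and $\norm{f}\leq 1$. Setting $c_0=\mu(\Delta_0)>0$, we have $f\equiv c_0-1$ on $\Delta_0$ and $f\equiv c_0$ off the base. Therefore, for any $(y,j)$ with $h_\alpha\geq 2n$ and $1\leq j\leq n$, all $n$ successive iterates $T^k(y,j)$ stay strictly above the base and one has $S_nf(y,j)=nc_0$. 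The $\mu$-measure of this set of starting points is exactly $n\tail_{2n}$, so
\[
\int\abs{S_nf}^p\dd\mu\geq (nc_0)^p\cdot n\tail_{2n}\geq c\,n^{p-q+1}
\]
using $\tail_n\sim Cn^{-q}$. Combining the two bounds finishes the proof.

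The only delicate point is the truncation step in the stable regime $q<2$: because $\E\abs{Z'}^p=\infty$ for $p\geq q$, weak convergence does not yield convergence of $p$-th moments, and the bounded cutoff $\Phi_M$ is what bypasses this obstruction. The second bound is a purely combinatorial tower computation, and the only thing to verify carefully there is that a function constant on partition elements is genuinely Lipschitz in the tower metric, which is immediate from the Lipschitz seminorm only testing pairs in a common partition element.
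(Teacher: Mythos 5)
Your proof is correct and takes essentially the same two-pronged approach as the paper: the first term of the maximum comes from the distributional convergence in Theorem~\ref{thm:stable}, and the second from a piecewise-constant test function that detects long sojourns in tall columns. Two small points of difference, both improvements in clarity rather than substance: the paper's test function for the $n^{p-q+1}$ bound is $1$ on the union of columns of height at least $n$ (hence $n$-dependent), whereas yours is a fixed, $n$-independent function that is constant off the base; and the paper merely asserts that $a_n^{-p}\int \abs{S_n f}^p\dd\mu$ is bounded below by distributional convergence, whereas you make the argument explicit via the bounded cutoff $\Phi_M$ and Portmanteau, which is the correct way to handle the stable regime $q<2$ where $\E\abs{Z}^p=\infty$ for $p\geq q$.
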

The phase transition in these lower bounds happens at $p^*=2q-2$ for
$q\geq 2$, and at $p^*=q$ for $q\leq 2$. Before this threshold, the
first lower bound (that corresponds to an average behavior over the
whole space) is more important, while the second one (that
corresponds to the Birkhoff sum being large on a small part of the
space) is dominating afterwards.
\begin{proof}
For the lower bound $n^{p-q+1}$, we take $f$ that is equal to $1$ on
$\bigcup_{h_\alpha \geq n} \bigcup_{i<h_\alpha} \Delta_{\alpha, i}$,
and equal to another constant on the complement of this set, to make
sure that $\int f\dd\mu=0$. Then $S_n f = n$ on $\bigcup_{h_\alpha
\geq 2n} \bigcup_{i<h_\alpha/2} \Delta_{\alpha, i}$, hence
  \begin{equation*}
  \int \abs{S_n f}^p\dd\mu \geq n^p \mu\left(\bigcup_{h_\alpha \geq 2n}
  \bigcup_{i<h_\alpha/2} \Delta_{\alpha, i}\right)
  = n^p \sum_{h\geq 2n} \frac{h}{2} \mu(\phi=h).
  \end{equation*}
Using a discrete integration by parts and the assumption $\mu(\phi
\geq n) \sim C n^{-q}$, one checks that this is $\geq c n^{p-q+1}$.

For the other bound, we fix a mean zero Lipschitz function $f$ in the
set $\mathcal{U}$ constructed in Theorem~\ref{thm:stable}. This
theorem shows the existence of $a_n$ and $Z$ nondegenerate such that
$a_n^{-1}S_nf\to_d Z$. Hence $a_n^{-p}\int \abs{S_n f}^p\dd\mu$ is
bounded from below and we get the lower bound $c a_n^p$ in all three
cases.
\end{proof}

In the case $q<2$ and $p=q$, the lower bound in the proposition is
$\int \abs{S_n f}^q \dd\mu \geq cn$. It is not sharp: for
$f\in\mathcal{U}$, $S_n f/n^{1/q}$ converges to a stable law $Z$ of
index $q$, whose $q$-th moment is infinite, hence $\int \abs{S_n
f/n^{1/q}}^q \dd\mu$ tends to infinity. To get a better lower bound,
one should study the speed of convergence of $S_n f/n^{1/q}$ to $Z$.
We can do this under stronger assumptions on the tails (this is not
surprising since it is well known that the speed of convergence to
stable laws is related to regularity assumptions on the tails of the
random variables):
\begin{prop}
\label{prop:lower_stable}
Consider a Young tower with $\tail_n = C n^{-q}+O(n^{-q-\epsilon})$
for some $q\in (1,2)$ and some $\epsilon>0$. Then there exists $c>0$
such that for all $n> 0$
  \begin{equation*}
  \sup_{\norm{f}\leq 1, \int f\dd\mu=0} \int \abs{S_n f}^q \dd\mu \geq cn \log n.
  \end{equation*}
\end{prop}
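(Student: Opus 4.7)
The plan is to lower-bound $\int \abs{S_nf}^q \dd\mu$ for a carefully chosen mean-zero Lipschitz function $f=f_n$ that depends on $n$. Via the layer-cake formula
\begin{equation*}
\int \abs{S_nf}^q \dd\mu = q\int_0^\infty t^{q-1}\mu(\abs{S_nf}>t) \dd t,
\end{equation*}
the task reduces to establishing a tail lower bound $\mu(\abs{S_nf}>t) \geq cnt^{-q}$ uniformly on an interval $t \in [An^{1/q}, n/A]$; substituting this into the integral yields $\int \abs{S_nf}^q \dd\mu \geq c' n \log(n^{1-1/q}/A^2) \asymp n\log n$, producing the desired logarithmic factor.

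For the choice of $f$, I would take $g = \mathbf{1}_{\{h_\alpha \geq H_0\}}$, the indicator of the union of tall-tower partition elements, with cutoff $H_0$ of order $n^{1/q}$, and set $f = g - \int g \dd\mu$. Since $g$ is constant on each partition element, its Lipschitz seminorm vanishes, so $\norm{f}$ is bounded by a constant independent of $n$, and after rescaling we may assume $\norm{f}\leq 1$. A partial-summation computation using the tail hypothesis gives $\int g \dd\mu \asymp H_0^{1-q}$, so the centering satisfies $n\int g \dd\mu = O(n^{1/q})$, comparable to the scaling $a_n = n^{1/q}$ of Theorem~\ref{thm:stable}.

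The tail bound comes from a one-big-jump estimate. For $t \in [An^{1/q}, n/A]$ with $A$ large, let $W_t$ be the event that the orbit $x, Tx, \dotsc, T^{n/2-1}x$ enters some $\Delta_{\alpha,0}$ with $h_\alpha \in [t, 2t)$. On $W_t$ the corresponding full tower ascent lies within $[0, n-1]$ and contributes at least $t$ to $S_n g$; combined with $\abs{n\int g \dd\mu} = O(n^{1/q}) \leq t/2$, this forces $\abs{S_nf} \geq t/2$. Writing $W_t = \{X_t \geq 1\}$ where $X_t$ counts such visits, $T$-invariance gives $\E[X_t] = (n/2)(\tail_t - \tail_{2t}) \asymp nt^{-q}$. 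Distinct visits to $[t, 2t)$-tall bases are separated in time by at least $t$, so under approximate independence at these widely-separated times, $\E[X_t(X_t-1)] = O\bigl((\E[X_t])^2\bigr)$, and Paley--Zygmund then delivers $\mu(W_t) \geq (\E[X_t])^2/\E[X_t^2] \geq c\,nt^{-q}$ provided $A$ is large enough that $\E[X_t]$ is bounded.

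The main obstacle is the quantitative decorrelation bound on $\E[X_t^2]$: visits to distinct tall-tower bases are not genuinely independent under $T$, so the heuristic $\mu(\{T^ix \in \Delta_{\alpha,0}\}\cap\{T^jx \in \Delta_{\beta,0}\}) \approx \mu(\Delta_{\alpha,0})\mu(\Delta_{\beta,0})$ for $\abs{i-j}$ large must be made precise with controllable error. The natural route is through the first-return map to $\Delta_0$, which is uniformly expanding and mixes exponentially; the strengthened tail hypothesis $\tail_n = Cn^{-q} + O(n^{-q-\epsilon})$ provides enough regularity of the return-time distribution to sharpen a Nagaev-type one-big-jump tail bound all the way down to the critical scale $t \asymp n^{1/q}$, which is precisely the threshold at which the stable-limit behaviour of Theorem~\ref{thm:stable} takes over from the isolated-jump regime. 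With $\mu(W_t) \geq cnt^{-q}$ established, the layer-cake integration from the first paragraph concludes the proof.
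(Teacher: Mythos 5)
Your approach is genuinely different from the paper's. The paper proves Proposition~\ref{prop:lower_stable} by establishing a quantitative Berry--Esseen bound for $S_nf/n^{1/q}$ against the limiting stable law $Z$ (Propositions~\ref{prop:BE_Y} and~\ref{prop:BE_Delta}), via the Nagaev--Guivarc'h method: a $C^1$ perturbation of the transfer operator of the induced map $T_Y$, an asymptotic $\lambda_t = 1+ct^q+O(t^{q+\epsilon})$ for the leading eigenvalue, Berry--Esseen's lemma, and a delicate error analysis to lift the estimate from $Y$ to the whole tower. The heavy tail $\Pbb(Z>s)\geq cs^{-q}$ of the stable limit then supplies the lower bound. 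By contrast, you bypass all Fourier-analytic machinery and aim directly at the tail lower bound $\mu(\abs{S_nf}>t)\geq cnt^{-q}$ on $[An^{1/q}, n/A]$ via a one-big-jump event and Paley--Zygmund. The choice $f=\mathbf{1}_{\{h_\alpha\geq H_0\}}-\int g\dd\mu$ with $H_0\asymp n^{1/q}$, the verification that $\norm{f}\leq 1$ (the Lipschitz seminorm vanishes since $g$ is constant on partition elements), the centering estimate $n\int g\dd\mu=O(n^{1/q})$, and the implication $W_t\Rightarrow\abs{S_nf}\geq t/2$ are all correct and clean. Conceptually this is a more elementary route, and it is noteworthy that (unlike the paper's argument) it does not appear to require the strengthened hypothesis $\tail_n = Cn^{-q}+O(n^{-q-\epsilon})$: only $\tail_n\sim Cn^{-q}$ is used, to get $\tail_t-\tail_{2t}\asymp t^{-q}$ and $\mu(B')\asymp t^{-q}$.

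However, the proposal leaves the crux of the argument --- the second-moment bound $\E[X_t(X_t-1)]\leq C(\E X_t)^2$ --- as a sketch, and filling it in is not routine. The naive decay-of-correlations bound from~\cite{lsyoung_recurrence}, namely $\abs{\mu(B'\cap T^{-k}B')-\mu(B')^2}\leq Ck^{-(q-1)}$, does \emph{not} suffice: since $q<2$, the series $\sum_k k^{-(q-1)}$ diverges, and summing $(n/2-k)\cdot k^{-(q-1)}$ over $k\in[t,n/2]$ overwhelms the target bound $n^2t^{-2q}$ by polynomial factors. What saves the argument is that $1_{B'}$ is supported on the base $\Delta_0$ and is Lipschitz with norm $O(1)$, so one can invoke the renewal decomposition $1_{\Delta_0}\boL^k 1_{B'}=T_k1_{B'}$ together with Proposition~\ref{prop:conseq_banach}, $\norm{T_k-\Pi T_k\Pi}=O(k^{-q})$, to get the sharper estimate $\mu(B'\cap T^{-k}B')\leq C\mu(B')^2+C\mu(B')k^{-q}$. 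Since $a_k=0$ for $k<t$ and $\sum_{k\geq t}k^{-q}\asymp t^{1-q}$, this yields $\E[X_t(X_t-1)]\leq Cn(nt^{-2q}+t^{1-2q})\leq Cn^2t^{-2q}$ for $t\leq n$, as needed. So your strategy does close, but only by exploiting the same renewal-operator input (Proposition~\ref{prop:conseq_banach}) that drives the paper's argument, and the proposal should be explicit that the generic $O(k^{-(q-1)})$ mixing rate is inadequate here; without that sharpening the Paley--Zygmund step fails, and the ``approximate independence'' claim is unsupported. Your final sentence attributing the rescue of the critical scale to the $O(n^{-q-\epsilon})$ error term is a red herring: the $\epsilon$-regularity plays no role in this argument.
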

This lower bound is considerably more complicated to establish than
the ones in Proposition~\ref{prop:lower}. Since the arguments are
rather different from the rest of the paper (essentially, they reduce
to a proof of a Berry-Esseen like bound for $S_n f/n^{1/q}$), we
defer the proof of the proposition to
Appendix~\ref{sec:lower_stable}. The assumptions of this proposition
are for instance satisfied for the classical Pomeau-Manneville
intermittent maps~\cite{liverani_saussol_vaienti,pomeau_manneville}.
(See for example~\cite[Proposition~11.12]{melbourne_terhesiu}.)

\medskip

For $q=2$, the bound $c\max( (n\log n)^{p/2}, n^{p-q+1})$ is known to
be optimal for all $p$, see Remarks~\ref{rmk:BCD}
and~\ref{rmk:previous} below. Also, for $q>2$, the bound
$c\max(n^{p/2}, n^{p-q+1})$ is known to be optimal for all $p\neq
2q-2$.  The remaining cases are much more subtle, and are solved for
the first time in this paper.
We note that for $q>2$ and $p=2q-2$,~\cite{gouezel_chazottes_concentration}
obtains an additional upper bound for the weak moment of $S_n f$,
which implies for $p>2q-2$ the upper bound $C n^{p-q+1}$, in
accordance with the lower bound. Moreover, the very precise methods
of~\cite{gouezel_chazottes_concentration} seemed to indicate that the
upper bound for the weak moment at $p=2q-2$ was optimal, and that the
discrepancy with the lower bound was due to a suboptimality of the
(naive) lower bound. We prove below that this is not the case.
\begin{thm}
\label{main_thm:moments} Consider a Young tower with $\tail_n = O(n^{-q})$ for some
$q>1$. Then, for all $p\in [1,\infty)$, there exists $C>0$ such that
for any Lipschitz function $f$ with $\norm{f}\leq 1$ and $\int
f\dd\mu=0$, for all $n\geq 0$,
  \begin{equation*}
  \int \abs{S_n f}^p \dd\mu\leq
  \begin{cases}
    C\max(n^{p/2}, n^{p-q+1}) & \text{ if }q>2,
    \\
    C\max( (n\log n)^{p/2}, n^{p-q+1}) & \text{ if }q=2,
    \\
    C\max( n^{p/q}, n^{p-q+1}) & \text{ if }q<2\text{ and }p\neq q.
  \end{cases}
  \end{equation*}
If $q<2$, we have for all $t>0$
  \begin{equation}
  \label{eq:weak_q}
  \mu\{ x \st \abs{S_n f(x)}\geq t\} \leq C t^{-q}n
  \end{equation}
and therefore
  \begin{equation*}
  \int \abs{S_n f}^q\dd\mu \leq C n\log n.
  \end{equation*}
\end{thm}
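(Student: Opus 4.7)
The plan is to induce to the uniformly hyperbolic return map $F=T^\phi:\Delta_0\to\Delta_0$ and transfer the problem to estimating Birkhoff sums of the heavy-tailed induced observable $\tilde f(y)=\sum_{i<\phi(y)}f(T^iy)$. Since $\norm{f}\leq 1$ one has $\abs{\tilde f}\leq\phi$, so $\mu_0\{\abs{\tilde f}>t\}\leq Ct^{-q}$, where $\mu_0$ is the normalized restriction of $\mu$ to $\Delta_0$. Standard bookkeeping yields, for every $x\in X$,
\begin{equation*}
S_nf(x)=\sum_{k=0}^{N-1}\tilde f(F^ky)+R(x,n),
\end{equation*}
where $y$ is the first return of $x$ to $\Delta_0$, $N=N(x,n)$ is the number of returns in time $n$, and $\abs{R(x,n)}\leq 2\max_{0\leq k\leq N}\phi(F^ky)$. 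Because the $N$ full excursions must fit inside $n$ iterates of $T$, every $\phi(F^ky)$ with $0\leq k<N$ is automatically $\leq n$; longer excursions are shunted into $R$.

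For the main sum, truncate at level $T$: write $\tilde f\,\mathbf{1}_{\abs{\tilde f}\leq T}=u_T+c_T$ with $c_T\coloneqq\int\tilde f\,\mathbf{1}_{\abs{\tilde f}\leq T}\dd\mu_0$. Kac's formula together with $\int f\dd\mu=0$ gives $\int\tilde f\dd\mu_0=0$, whence $\abs{c_T}\lesssim T^{1-q}$, and $N\abs{c_T}$ is absorbed into the bounds below. Since $F$ is Gibbs--Markov, $u_T$ admits a martingale--coboundary decomposition $u_T=m_T+g_T-g_T\circ F$ for the reverse filtration, with the norms $\norm{m_T}_r$ and $\norm{g_T}_r$ controlled uniformly in $T$ by $\norm{u_T}_r$. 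The Burkholder--Rosenthal inequality then yields
\begin{equation*}
\norm*{\sum_{k<N}m_T\circ F^k}_p\lesssim N^{1/2}\norm{u_T}_2+N^{1/p}\norm{u_T}_p.
\end{equation*}
The tail hypothesis $\mu_0\{\abs{\tilde f}>t\}\leq Ct^{-q}$ yields $\norm{u_T}_2^2\lesssim 1,\log T, T^{2-q}$ in the regimes $q>2,\,q=2,\,q<2$, and $\norm{u_T}_p^p\lesssim T^{p-q}$ whenever $p>q$. Choosing $T=n^{1/2}$, $T=(n\log n)^{1/2}$, $T=n^{1/q}$ in the three regimes balances the two right-hand terms and reproduces exactly the first contribution in the claimed upper bound.

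The remainder $R$ is controlled via the pointwise bound $\abs{R}\leq 2\max_{k\leq n}\phi\circ F^k$. A union-bound estimate
\begin{equation*}
\mu\bigl\{\max_{k\leq n}\phi\circ F^k>t\bigr\}\leq Cnt^{-q}
\end{equation*}
gives the weak inequality~\eqref{eq:weak_q} at once, and integrating against $pt^{p-1}\dd t$ up to the trivial cutoff $t=n$ produces $\norm{R}_p^p\lesssim n^{p-q+1}$ for $p>q$ and $\lesssim n\log n$ for $p=q$. The main obstacle is setting up the martingale--coboundary decomposition for the truncated observable $u_T$ with norm estimates uniform in $T$: this relies on the spectral gap of the transfer operator of $F$ on a Banach space adapted to Lipschitz observables, together with a careful control of $g_T-g_T\circ F^N$ to ensure that the coboundary remainder does not spoil the gain from Burkholder. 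A secondary difficulty is the bookkeeping that lifts the base-map estimates back to arbitrary starting points on the tower while propagating the Lipschitz assumption on $f$ through the truncation procedure.
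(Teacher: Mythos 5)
Your strategy --- inducing to the Gibbs--Markov return map $F=T^{\phi}$, truncating the induced observable at a level $T$, and running a martingale--coboundary plus Burkholder--Rosenthal argument on the base --- is genuinely different from the paper's proof, which never induces: there the reverse martingale differences are $A_k\circ T^k$ for the filtration $T^{-k}\boF_0$ on the tower itself, and the difficulty is concentrated in Lemma~\ref{lem_Fk}, proved with renewal-operator asymptotics, giving $\abs{A_k}=O(1+h\wedge k)$ at height $h$. Your route is a priori reasonable (close in spirit to the approach of Dedecker--Merlev\`ede mentioned in Remark~\ref{rmk:previous}), but as written it has a gap at exactly the point where the theorem is delicate. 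After truncating at $T\ll n$, the main sum still contains the contribution $\sum_{k<N}\tilde f\,\mathbf{1}_{\abs{\tilde f}>T}\circ F^k$ of the \emph{completed} excursions of length between $T$ and $n$; your remark that ``longer excursions are shunted into $R$'' covers only the incomplete excursion at time $n$, not these. This big-jump term is precisely what produces the $n^{p-q+1}$ branch and the phase transition at $p^*=2q-2$: its mean is of order $nT^{1-q}$, but its $p$-th moment cannot be handled by the triangle inequality (which gives $n^{2p-q}$ instead of $n^{p-q+1}$), so it needs its own Rosenthal-type estimate for a dependent, non-martingale, nonnegative heavy-tailed sequence --- none of which appears in your write-up. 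For $q<2$ the same term must be bounded in weak $L^q$, where the expected number of jumps above $T=n^{1/q}$ is of order $1$ and a ``single big jump'' argument for dependent variables is required.

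Two smaller but real issues. First, $N(x,n)$ is random, so $\sum_{k<N}m_T\circ F^k$ is a stopped martingale sum; you need Doob's maximal inequality (and a weak-type maximal analogue of von Bahr--Esseen when $p=q<2$) before Burkholder--Rosenthal can be applied with $N$ replaced by $n$. Second, the union bound $\mu_0\{\max_{k\leq n}\phi\circ F^k>t\}\leq Cnt^{-q}$ controls only the remainder $R$; it does not ``give the weak inequality~\eqref{eq:weak_q} at once'', since \eqref{eq:weak_q} concerns $S_nf$ itself and also requires weak-$L^q$ control of the martingale part, of $Nc_T$, and of the big-jump sum. Finally, the passage from $\mu_0$ on $\Delta_0$ to $\mu$ on the whole tower, which you defer, contributes a further term of size $n^{p-q+1}$ (the initial climb, of length up to the height of the starting point, integrated via Lemma~\ref{lem:karamata}); it is routine but not optional.
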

Our upper bounds all match the corresponding lower bounds given in
Propositions~\ref{prop:lower} and~\ref{prop:lower_stable}, and are
therefore optimal.

Note that, in the proofs, if is sufficient to understand what happens
at the critical exponent $p^*=2q-2$ for $q> 2$: a control on the
$L^{2q-2}$-norm for $q>2$ readily implies the control for any $p\in
[1,\infty)$ thanks to the trivial inequalities
  \begin{equation}
  \label{eq:deduce_moments_trivial}
  \norm{u}^p_{L^p} \leq \norm{u}^{p-r}_{L^\infty}\norm{u}^r_{L^r} \text{ if $p>r$},\quad
  \norm{u}_{L^p} \leq \norm{u}_{L^r} \text{ if $p<r$}.
  \end{equation}
In the same way, for $q<2$, the control~\eqref{eq:weak_q} on the weak
$q$-th moment implies the corresponding $L^p$ controls for any $p\in
[1,\infty)$ thanks to the equality
  \begin{equation}
  \label{eq:deduce_moments}
  \int \abs{u}^p \dd\mu = p\int_{s=0}^{\norm{u}_{L^\infty}}
  s^{p-1}\mu\{\abs{u}>s\}\dd s.
  \end{equation}
This formula would also apply in the $q>2$ case (combined with the
control of $\mu\{\abs{u}>s\}$ coming from the estimate at the
exponent $p^*$ and the Markov inequality), but it gives worse
constants than~\eqref{eq:deduce_moments_trivial} in this case.

On the other hand, for $q=2$, the bound $\sqrt{n \log n}$ for the
second moment does not give the desired upper bound for $p>2$ (using
the formulas~\eqref{eq:deduce_moments_trivial}
or~\eqref{eq:deduce_moments}, one only gets the upper bound $\int
\abs{S_n f}^p \dd\mu \leq Cn^{p-1} \log n$, with an extra $\log n$).

\medskip

As an immediate consequence of the bounds on moments at the critical exponent,
we obtain convergence of moments for all lower exponents.

\begin{cor} \label{cor:convergence}
Consider a Young tower with $\tail_n \sim C n^{-q}$ for some $q>1$.
Suppose that $f$, $a_n$ and $Z$ are as in Theorem~\ref{thm:stable}.
Then $\int \abs{a_n^{-1}S_nf}^p \dd\mu\to \E(\abs{Z}^p)$ for all
$p<p^*$ where $p^*=2q-2$ for $q\ge 2$ and $p^*=q$ for $q\in(1,2)$.

In particular, there exist nonzero constants $C=C_{p,q}$ such that
\begin{itemize}
\item  if $q>2$, then $\int \abs{S_nf}^p \dd\mu\sim C n^{p/2}$
    for all $p<2q-2$.
\item  if $q=2$, then $\int \abs{S_nf}^p \dd\mu\sim C (n\log
    n)^{p/2}$ for all $p<2$.
\item  if $q\in(1,2)$, then $\int\abs{S_nf} ^p\dd\mu\sim C
    n^{p/q}$ for all $p<q$.
\end{itemize}
\end{cor}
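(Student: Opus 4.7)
The plan is to deduce $\int \abs{a_n^{-1}S_nf}^p\dd\mu \to \E(\abs{Z}^p)$ from the distributional convergence $a_n^{-1}S_nf\to_d Z$ provided by Theorem~\ref{thm:stable}, via uniform integrability of the family $\{\abs{a_n^{-1}S_nf}^p\}_n$. The standard sufficient condition is to exhibit an exponent $p'$ with $p<p'<p^*$ such that $\sup_n \int \abs{a_n^{-1}S_nf}^{p'}\dd\mu<\infty$, i.e., $\int\abs{S_nf}^{p'}\dd\mu=O(a_n^{p'})$. Since $p<p^*$ by assumption, such a $p'$ exists, and the required moment estimate is precisely what Theorem~\ref{main_thm:moments} gives at subcritical exponents.

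Concretely, I would verify the three cases separately. For $q>2$ one has $a_n=n^{1/2}$ and $p^*=2q-2$; picking $p<p'<2q-2$, the inequality $p'-q+1<p'/2$ forces the first term in the max to dominate, so $\int\abs{S_nf}^{p'}\dd\mu\leq Cn^{p'/2}=Ca_n^{p'}$. For $q=2$ one has $a_n=(n\log n)^{1/2}$ and $p^*=2$; for $p'<2$, the term $n^{p'-1}$ is negligible compared to $(n\log n)^{p'/2}$, so again $\int\abs{S_nf}^{p'}\dd\mu=O(a_n^{p'})$. For $q\in(1,2)$ one has $a_n=n^{1/q}$ and $p^*=q$; choosing $p<p'<q$ (so in particular $p'\neq q$), a short computation shows $p'/q>p'-q+1$, hence Theorem~\ref{main_thm:moments} yields $\int\abs{S_nf}^{p'}\dd\mu\leq Cn^{p'/q}=Ca_n^{p'}$. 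In all three regimes, uniform integrability holds, and combined with $a_n^{-1}S_nf\to_d Z$ this yields $\int\abs{a_n^{-1}S_nf}^p\dd\mu\to \E(\abs{Z}^p)$.

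The three explicit asymptotics in the ``in particular'' statement then follow by substituting the corresponding value of $a_n$ and setting $C=\E(\abs{Z}^p)$, which is strictly positive because $Z$ is nondegenerate. There is no real obstacle here; the corollary is essentially a packaging of Theorem~\ref{main_thm:moments} with the de la Vall\'ee-Poussin criterion for convergence of moments. The only mild subtlety is in the $q\in(1,2)$ case: one must choose $p'$ strictly \emph{below} the critical value $q$, so that the pure power bound $n^{p'/q}$ applies, rather than relying on the endpoint estimate $\int\abs{S_nf}^q\dd\mu\leq Cn\log n$ which carries an extra logarithmic factor and would not give uniform integrability at the level of $a_n$.
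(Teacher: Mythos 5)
Your proof is correct and takes essentially the same approach as the paper: both deduce convergence of the $p$'th moment from the distributional convergence in Theorem~\ref{thm:stable} together with uniform integrability, the latter supplied by the boundedness of $\int\abs{a_n^{-1}S_nf}^{p'}\dd\mu$ for some $p'\in(p,p^*)$ as guaranteed by Theorem~\ref{main_thm:moments}. The paper's proof is just terser, and it includes your observation (phrased as ``for $q\geq 2$, one can even take $p'=p^*$'') that for $q<2$ one must choose $p'$ strictly below the critical exponent.
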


\begin{proof}
As in~\cite{MTorok12}, this is an immediate consequence of
Theorem~\ref{thm:stable}, together with the fact that
$\int\abs{a_n^{-1/2}S_nf}^{p'}\dd\mu$ is bounded for any $p'\in (p,
p^*)$ as guaranteed by Theorem~\ref{main_thm:moments} (for $q\geq 2$,
one can even take $p'=p^*$).
\end{proof}

\begin{rmk} \label{rmk:BCD}
Previously, no results were available on convergence of moments for
$q<2$. The case $q>2$ in Corollary~\ref{cor:convergence} recovers a
result of~\cite{MTorok12} and the result for $q=2$ was obtained
by~\cite{Chernov_preprint} in the context of dispersing billiards
with cusps. \cite{Chernov_preprint} consider also the critical
exponent $p=2$ for dispersing billiards with cusps, and prove for
this example that the limiting second moment is twice the moment of
the limiting Gaussian: $\int\abs{(n\log n)^{-1/2}S_nf}^2 \dd\mu\to
2\E(\abs{Z}^2)$. This particular behaviour is due to the very
specific geometric structure of the billiard.
\end{rmk}

\begin{rmk} \label{rmk:previous}
Certain aspects of Theorem~\ref{main_thm:moments} and
Corollary~\ref{cor:convergence} do not require the full strength of
the assumption that there is an underlying Young tower structure. We
can consider the more general situation where $f$ is a mean zero
observable lying in $L^\infty$ such that $\abs*{\int f\,g\circ T^n
\dd\mu}\le C\norm{g}_{L^\infty}n^{-(q-1)}$ for all $g\in L^\infty$,
$n\ge1$. (Such a condition is satisfied for $f$ Lipschitz when $X$ is
a Young tower with $\tail_n =O(n^{-q})$.)

In the case $q=2$, this weaker condition is sufficient to recover all
the moment estimates (and hence the convergence of moments for $p<2$)
described above. By~\cite[Lemma~2.1]{M09b}, $\int
\abs{S_nf}^p\dd\mu\ll n^{p-1}$ for $p>2$ and $\int
\abs{S_nf}^2\dd\mu\ll n\log n$.

In the case $q>2$, $\int \abs{S_nf}^p\dd\mu \ll n^{p-q+1}$ for
$p>2q-2$ by~\cite[Lemma~2.1]{M09b}, and $\int \abs{S_nf}^p\dd\mu\le
Cn^{p/2}$ for $p<2q-2$ by~\cite{MTorok12}, Again it follows that all
moments converge for $p<2q-2$.

After we completed this article, we learned that, using techniques
that are completely different from the ones we develop, Dedecker and
Merlev\`ede~\cite{dedecker_merlevede_moment} also obtain the controls
on moments given in Theorem~\ref{main_thm:moments}, essentially under
an assumption of the form $\abs*{\int f\,g\circ T^n\dd\mu}\le
C\norm{g}_{L^\infty}n^{-(q-1)}$. Their arguments (initially developed
to control the behavior of the empirical measure) rely on general
probabilistic inequalities for sums of random variables, and can
apparently not give the concentration inequalities of
Theorem~\ref{main_thm:concentration} below.
\end{rmk}

\begin{rmk}
Proposition~\ref{prop:lower} and Theorem~\ref{main_thm:moments}
clarify certain results in the Physics literature. As
in~\cite{MTorok12}, our results go over to flows, and apply in
particular to infinite horizon planar periodic Lorentz gases.   These
can be viewed as suspension flows over Young towers with $\tail_n\sim
Cn^{-2}$ so we are in the case $q=2$.  In particular, if $r(t)$
denotes position at time $t$, then $(t\log t)^{-1/2}r(t)\to_d Z$
where $Z$ is a nondegenerate Gaussian~\cite{szasz_varju_infinite}.
\cite{Armstead_etal03} consider growth rate of moments for $r(t)$,
but neglecting logarithmic factors.  Defining
$\gamma_p=\lim_{t\to\infty}\log \int\abs{r(t)}^p \dd\mu/\log t$, they
argue heuristically that $\gamma_p=\max\{p/2,p-1\}$ in accordance
with our main results. \cite{Courbage_etal08} conducted numerical
simulations to verify the growth rates of the moments, including
logarithmic factors, but based on the belief that $\int\abs{r(t)}^p
\dd\mu$ scales like $(n\log n)^{p/2}$ for all $p$, whereas we have
shown that this is correct only for $p\le2$.

Two other examples of billiards that are modelled by Young towers
with $\tail_n\sim Cn^{-2}$ are Bunimovich stadia (discrete and
continuous time)~\cite{balint_gouezel} and billiards with cusps
(discrete time)~\cite{BalintChernovDolgopyat11,Chernov_preprint}.
Again, our results apply to these situations with $q=2$.
\end{rmk}

\medskip

The above optimal upper bounds for moments, dealing with Birkhoff
sums, can be extended to concentration estimates, for any (possibly
non-linear) function of the point and its iterates. More precisely,
consider a function $K(x_0,x_1,\dotsc)$ (depending on finitely or
infinitely many coordinates) which is separately Lipschitz: for all
$i$, there exists a constant $\Lip_i(K)$ such that, for all
$x_0,x_1,\dotsc$ and $x'_i$,
  \begin{equation*}
  \abs{K(x_0,x_1,\dotsc,x_{i-1}, x_i, x_{i+1},\dotsc)
  -K(x_0,x_1,\dotsc,x_{i-1}, x'_i, x_{i+1},\dotsc)}
  \leq \Lip_i(K) d(x_i, x'_i).
  \end{equation*}
If $K$ does not depend on some variable $x_i$, we set by convention
$\Lip_i(K)=0$.

The function $K$ is defined on the space $\tilde X=X^{\N}$. This
space carries a natural probability measure, describing the
deterministic dynamics once the starting point is chosen at random
according to $\mu$, i.e., $\tilde\mu\coloneqq \mu\otimes
\delta_{Tx}\otimes \delta_{T^2 x}\otimes \dotsm$. Let
  \begin{equation*}
  \E(K)=\int_X K(x,Tx,\dotsc)\dd\mu(x)=\int_{\tilde X} K \dd\tilde\mu.
  \end{equation*}
This is the average of $K$ with respect to the natural measure of the
system. We are interested in the deviation of $K(x, Tx,\dotsc)$ from
its average $\E(K)$. For instance, if $K(x_0,\dotsc, x_{n-1})=\sum
f(x_i)$, then $K(x, Tx,\dotsc)$ is simply the Birkhoff sum $S_n f$.
It is separately Lipschitz if $f$ is Lipschitz, with Lipschitz
constants $\Lip_i(K)=\Lip(f)$ for $0\leq i\leq n-1$, and
$\Lip_i(K)=0$ otherwise.

\begin{thm}
\label{main_thm:concentration}
Consider a Young tower with $\tail_n =O(n^{-q})$ for some $q>1$.
Then, for all $p\in [1,\infty)$, there exists $C>0$ such that, for
all separately Lipschitz function $K$,
\begin{itemize}
\item if $q>2$,
  \begin{equation*}
  \int \abs{K(x,Tx,\dotsc)-\E K}^{p}\dd\mu \leq \begin{cases}
    C\left(\sum \Lip_i(K)^2\right)^{p/2}&\text{ if }p\leq 2q-2,\\
    C\left(\sum \Lip_i(K)^2\right)^{q-1} \left(\sum \Lip_i(K)\right)^{p-(2q-2)}&\text{ if }p\geq 2q-2.
  \end{cases}
  \end{equation*}
\item if $q=2$, the quantity $\int \abs{K(x,Tx,\dotsc)-\E
    K}^{p}\dd\mu$ is bounded by
    \begin{equation*}
    \begin{cases}
    C\left(\sum \Lip_i(K)^2\right)^{p/2}\left[1+\log\left(\sum \Lip_i(K)\right)
      -\log\left(\sum \Lip_i(K)^2\right)^{1/2}\right]^{p/2}&\text{ if }p\leq 2,\\
    C\left(\sum \Lip_i(K)^2\right) \left(\sum \Lip_i(K)\right)^{p-2}&\text{ if }p>2.
  \end{cases}
  \end{equation*}
\item if $q<2$, then for all $t>0$
  \begin{equation}
  \label{eq:concent_q<2_weak}
  \mu\{x \st \abs{K(x,Tx,\dotsc)-\E K} \geq t\} \leq C t^{-q} \sum_i \Lip_i(K)^q
  \end{equation}
and therefore $\int \abs{K(x,Tx,\dotsc)-\E K}^{p}\dd\mu$ is
bounded by
  \begin{equation*}
  \begin{cases}
    C\left(\sum \Lip_i(K)^q\right)^{p/q}&\text{ if }p<q,\\
    C\left(\sum \Lip_i(K)^q\right)\left[1+\log\left(\sum \Lip_i(K)\right)
      -\log\left(\sum \Lip_i(K)^q\right)^{1/q}\right]&\text{ if }p=q,\\
    C\left(\sum \Lip_i(K)^q\right) \left(\sum \Lip_i(K)\right)^{p-q}&\text{ if }p>q.
  \end{cases}
  \end{equation*}
\end{itemize}
\end{thm}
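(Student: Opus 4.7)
The plan is to reduce Theorem~\ref{main_thm:concentration} to the moment bounds of Theorem~\ref{main_thm:moments} via a reverse martingale decomposition, in the spirit of the concentration technique developed in~\cite{gouezel_chazottes_concentration}. Set $\boF_n = T^{-n}\boB$ and write
  \begin{equation*}
  K(x,Tx,\dotsc) - \E K = \sum_{n\geq 0} D_n, \qquad D_n \coloneqq \E(K\given \boF_n) - \E(K\given \boF_{n+1}),
  \end{equation*}
which is a reverse martingale difference sequence. Using the transfer operator $\boL$ dual to $T$, the conditional expectation takes the form $\E(\psi\given \boF_n) = (\boL^n\psi)\circ T^n$, so each $D_n$ can be expressed explicitly in terms of $\boL^n$ applied to a function depending, via the separate Lipschitz property of $K$, on the future coordinates $x_n,x_{n+1},\dotsc$; conditioning on $\boF_{n+1}$ automatically cancels variations in $x_0,\dotsc,x_{n-1}$.

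The quantitative heart of the argument is to estimate $D_n$ using the regularity properties of $\boL$ acting on the Lipschitz class over the base of the tower. I would expect bounds of schematic form
  \begin{equation*}
  \norm{D_n}_{L^2}^{2} \lesssim \sum_{i\geq n} \Lip_i(K)^{2}, \qquad \norm{D_n}_{L^\infty} \lesssim \sum_{i\geq n} \Lip_i(K),
  \end{equation*}
with corrections reflecting the tower tail $\tail_n$. Feeding these into a Burkholder/Rosenthal-type inequality for martingale differences produces the $(\sum\Lip_i(K)^{2})^{p/2}$ bound (with the logarithmic factor when $q=2$) for $p$ up to the critical exponent $p^{*}$, by the very same mechanism that underlies the proof of the corresponding case of Theorem~\ref{main_thm:moments}: the Birkhoff case $K = \sum f \circ T^{i}$ exhibits all $\Lip_i(K)$ equal, and one recovers $n^{p/2}$ or $(n\log n)^{p/2}$.

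For the regime $p > p^{*}$, and for the weak-type bound~\eqref{eq:concent_q<2_weak} when $q<2$, I would truncate each $D_n$ at a threshold determined by the Lipschitz contribution and the current tower height: the bounded part is absorbed into the variance-type estimate just described, while the unbounded part is controlled by the tail estimate on $\{\phi\geq h\}$ and produces the factor $(\sum\Lip_i(K))^{p-p^{*}}$ (respectively $(\sum\Lip_i(K))^{p-q}$). The weak-$L^{q}$ estimate for $q<2$ then follows by combining this truncation with the maximal analogue of~\eqref{eq:weak_q}, and the $L^{p}$ bounds for $p\neq q$ follow from the weak bound via~\eqref{eq:deduce_moments}.

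The main obstacle will be making these schematic estimates on $D_n$ genuinely quantitative. The separate Lipschitz condition controls variation in one coordinate at a time, so one must track the joint dependence of $K$ on all future coordinates when $\boL^{n}$ is applied; moreover, the contraction of $\boL$ is highly non-uniform along fibres of large return time, which forces a careful decomposition of $D_n$ along the levels $\Delta_{\alpha,i}$ of the tower. Achieving the sharp matching with Theorem~\ref{main_thm:moments} at the critical exponent $p^{*}=2q-2$ (and $p=q$ when $q<2$), without losing an extraneous logarithm, will be the most delicate step and is where the bulk of the technical work is expected to lie.
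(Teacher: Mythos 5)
The high-level skeleton you propose — reverse martingale decomposition $K-\E K=\sum D_k$ with $D_k=\E(K\given\boF_k)-\E(K\given\boF_{k+1})$, then Burkholder--Rosenthal for $q\ge 2$ and a von Bahr--Esseen inequality (in weak $L^q$) for $q<2$ — is indeed the route the paper takes, so you have correctly identified the framework. However the quantitative core of your sketch is wrong, and the actual obstacles are not the ones you point to.

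First, your schematic bounds $\norm{D_n}_{L^2}^2 \lesssim \sum_{i\geq n}\Lip_i(K)^2$ and $\norm{D_n}_{L^\infty}\lesssim \sum_{i\geq n}\Lip_i(K)$ have the wrong structure. Writing $K_k=\E(K\given\boF_k)$, the difference $D_k=K_k-K_{k+1}$ is driven by the averaging over one more preimage step; it is nonzero only when $T x\in\Delta_0$, and it is controlled by $\Lip_a(K)$ for $a\le k$ (the early coordinates that are averaged out), weighted by a decaying factor in $k-h-a$ (with $h$ the height of the preimage) plus the full contribution $\sum_{a=k-h+1}^{k}\Lip_a(K)$ from the last excursion. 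In the Birkhoff case this reduces to $\abs{D_k}\lesssim 1+h\wedge k$; your heuristic would give $\norm{D_k}_{L^\infty}\lesssim n-k$, which is off by an order of magnitude and would not reproduce $n^{p/2}$. To obtain the correct bound the paper re-derives $K_k$ via an excursion-by-excursion replacement argument (the functions $w_i$ and the renewal decomposition $K_k=\sum T_{k-i}w_i+\text{const}$, with $\norm{w_i}\le\sum_{a+b=i}\Lip_a(K)c_b^{(q)}$), and then compares $K_k$ \emph{directly} to $K_{k+1}$ rather than to a limiting integral — this last point is precisely what avoids the divergence that killed the estimates of~\cite{gouezel_chazottes_concentration} for $q\le 2$.

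Second, your sketch does not address the genuine technical difficulty, which is that the sequence $\Lip_a(K)$ is arbitrary. After feeding the bound on $D_k$ into Burkholder--Rosenthal one is left, for the term $\sum_{a=k-h+1}^{k}\Lip_a(K)$, with double sums of the form $\sum_k\sum_h a_h\bigl(\sum_{a=k-h+1}^{k}\Lip_a(K)\bigr)^r$ which must be bounded by $(\sum\Lip_a(K)^2)^{r/2}$. When the $\Lip_a(K)$ vary this is \emph{not} an elementary convexity estimate; the paper needs the Hardy--Littlewood maximal function on $\Z$ together with Marcinkiewicz interpolation (Lemmas~\ref{lem:ineqq>2} and~\ref{lem:technical_q<2}), and in fact Lemma~\ref{lem:ineqq>2} fails at $q=2$, which explains the logarithm. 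This machinery is entirely absent from your proposal, and nothing in the rest of the sketch would substitute for it. Finally, your proposed truncation of $D_n$ for $p>p^*$ is not used: the regime above the critical exponent follows trivially from the bound at $p^*$ together with $\abs{K-\E K}\le\sum\Lip_i(K)$ via~\eqref{eq:deduce_moments_trivial}, and the weak-$L^q$ bound for $q<2$ is obtained directly from the weak von Bahr--Esseen inequality applied to the $D_k$, not by truncation.
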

Note that $\abs{K-\E(K)}$ is trivially bounded by $\sum_i \Lip_i(K)$.
Hence, when $q>2$, it is sufficient to prove the estimates for
$p=2q-2$, as the other ones follow
using~\eqref{eq:deduce_moments_trivial}. In the same way, for $q<2$,
it suffices to prove the weak moment
bound~\eqref{eq:concent_q<2_weak}, thanks
to~\eqref{eq:deduce_moments}. On the other hand, for $q=2$, the
inequality for $p=2$ is not sufficient to obtain the result for
$p>2$.

There are logarithmic terms in some of the above bounds when $q\leq
2$. This is not surprising, since such terms are already present in
the simpler situation of Birkhoff sums, in
Theorem~\ref{main_thm:moments}. The precise form of these logarithmic
terms may seem surprising at first sight, but it is in fact natural
since such a bound has to be homogeneous: The logarithmic term should
be invariant if one replaces $K$ with $\lambda K$, and therefore each
$\Lip_i(K)$ with $\lambda \Lip_i(K)$. This would not be the case for
the simpler bound $\log(\sum \Lip_i(K))$. When $\Lip_i(K)$ does not
depend on $i$, the bound $\log\left(\sum \Lip_i(K)\right)
-\log\left(\sum \Lip_i(K)^q\right)^{1/q}$ reduces to $(1-1/q) \log
n$, a constant multiple of $\log n$ as we may expect.

Compared to moment controls, concentration results for arbitrary
functions $K$ have a lot more applications, especially when $K$ is
non-linear. We refer the reader
to~\cite[Section~7]{gouezel_chazottes_concentration} for a
description of such applications.

Theorem~\ref{main_thm:concentration} implies
Theorem~\ref{main_thm:moments} (just take $K(x_0,\dotsc,
x_{n-1})=\sum f(x_i)$). However, the proof of
Theorem~\ref{main_thm:moments} is considerably simpler, and motivates
some techniques used in the proof of
Theorem~\ref{main_thm:concentration}. Hence, we prove both
theorems separately below. While some cases of
Theorem~\ref{main_thm:moments} are already known (especially the case
$q=2$, see Remark~\ref{rmk:previous}), we nevertheless give
again a full proof of these cases, for completeness and with the
concentration case in mind.

\medskip

The proofs of our results rely on two main tools: a dynamical one
(very precise asymptotics of renewal sequences of operators) and a
probabilistic one (inequalities for martingales, of
Burkholder-Rosenthal and von Bahr-Esseen type). In addition, for the
concentration inequalities, we require analytic tools such as
maximal inequalities and interpolation results, since the Lipschitz
constants $\Lip_a(K)$ may vary considerably with $a$, which makes more usual
inequalities too crude. All these tools are presented in
Section~\ref{sec:preliminaries}. Theorem~\ref{main_thm:moments} is
proved in Section~\ref{sec:moment}, and
Theorem~\ref{main_thm:concentration} is proved in
Section~\ref{sec:concentration}.

\section{Preliminaries}
\label{sec:preliminaries}

\subsection{Renewal sequences of operators}
\label{subsec:renewal}

In this paragraph, we summarize the results on renewal sequences of
operators that we need later on. They are proved
in~\cite{sarig_decay, gouezel_decay, gouezel_these}.

Consider a Young tower $T:X\to X$. The associated transfer operator
$\boL$, adjoint to the composition by $T$, is given by
  \begin{equation*}
  \boL u(x) = \sum_{Ty=x} g(y) u(y).
  \end{equation*}
Denoting by $g^{(n)}(x)=g(x)\dotsm g(T^{n-1}x)$ the inverse of the
jacobian of $T^n$, one has $\boL^n u(x) = \sum_{T^n y=x} g^{(n)}(y)
u(y)$. Iterating the inequality $\abs{\log g(x)-\log g(y)}\leq C
d(Tx, Ty)$ and using the uniform expansion when a trajectory returns
to the basis, one has the following bounded distortion property:
there exists $C>0$ such that, for all $n$, for all points $x$ and $y$
in the same cylinder of length $n$ (i.e., for $i<n$, the points $T^i
x$ and $T^i y$ are in the same partition element),
  \begin{equation*}
  \abs{\log g^{(n)}(x) - \log g^{(n)}(y)} \leq C d(T^n x, T^n y).
  \end{equation*}

Among the trajectories of $T$, the only non-trivial behavior is
related to the successive returns to the basis. Define a first return
transfer operator at time $n$ by $R_n u(x)= \sum_{T^n y=x} g^{(n)}(y)
u(y)$ where $x\in  \Delta_0$ and the sum is over those preimages $y$
of $x$ that belong to $\Delta_0$ but $T^i y \not\in \Delta_0$ for
$1\leq i\leq n-1$. Since $R_n$ only involves preimages $y$ with
$\phi(y)=n$, its operator norm $\norm{R_n}$ with respect to the
Lipschitz norm satisfies $\norm{R_n}\leq C \mu(\phi=n)$. In
particular, $R_n$ is easy to understand.

Define a partial transfer operator $T_n = 1_{\Delta_0}\boL^n
1_{\Delta_0}$. It can be written as $T_n u(x)=\sum_{T^n y= x}
g^{(n)}(y)u(y)$, where $x$ and $y$ all have to belong to $\Delta_0$.
Decomposing a trajectory from $\Delta_0$ to $\Delta_0$ into
successive excursions, one gets
  \begin{equation*}
  T_n = \sum_{k=1}^n \sum_{\ell_1+\dotsb+\ell_k=n} R_{\ell_1}\dotsm R_{\ell_k}.
  \end{equation*}
Formally, this is equivalent to the equality $\sum T_n z^n = (I-\sum
R_k z^k)^{-1}$. This makes it possible to understand $T_n$. Denote by
$\Pi$ the projection on constant functions on $\Delta_0$, given by
$\Pi u(x) = \int_{\Delta_0} u \dd \mu / \mu(\Delta_0)$.

The following proposition is~\cite[Proposition 2.2.19 and Remark
2.4.8]{gouezel_these} in the specific case of polynomial growth rate
(this proposition also holds for more exotic asymptotics such as
$O(n^{-q} \log n)$ -- it follows that most results of our paper could
be extended to such speeds).
\begin{prop}
\label{prop:conseq_banach}
Assume that $\mu(\phi \geq n)=O(n^{-q})$ for some $q>1$. Then
$\norm{T_{n+1}-T_n}=O(n^{-q})$ and $\norm{T_n -\Pi
T_n\Pi}=O(n^{-q})$.
\end{prop}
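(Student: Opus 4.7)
The plan is to exploit the operator-valued renewal identity $\sum_{n\ge 0} T_n z^n = (I - R(z))^{-1}$ with $R(z) = \sum_{k\ge 1} R_k z^k$, and to extract the asymptotics of $T_n$ from the behaviour of $(I-R(z))^{-1}$ on the closed unit disk, using spectral perturbation at $z=1$ plus a Fourier/Tauberian step. The hypothesis $\mu(\phi\ge n) = O(n^{-q})$ enters only through the estimate $\sum_{k> n}\norm{R_k} \leq C\mu(\phi\ge n) = O(n^{-q})$, which controls the regularity of $R(z)$ on $\{\abs z\leq 1\}$.

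First I would study $R(1)=\sum_k R_k$, the transfer operator of the first-return map to $\Delta_0$. Since this first-return map is uniformly expanding and Markov with bounded distortion, $R(1)$ is quasi-compact on Lipschitz functions with a simple leading eigenvalue $1$, spectral projection exactly $\Pi$, and spectral gap. By standard perturbation theory, in a neighbourhood of $z=1$ in $\C$ there is a splitting
\begin{equation*}
R(z) = \lambda(z)P(z) + N(z),\qquad (I-R(z))^{-1} = \frac{P(z)}{1-\lambda(z)} + (I-N(z))^{-1}(I-P(z)),
\end{equation*}
with $\lambda(1)=1$, $P(1)=\Pi$, and $I-N(z)$ uniformly invertible. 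The second summand is already smooth near $z=1$. Using the tail bound, an Abel-summation argument shows that $R$ is Lipschitz on $\{\abs z\le 1\}$ with $\lambda(z) = 1 - \mu(\Delta_0)^{-1}(1-z) + G(z)$, where $G$ and the analogous correction in $P(z)$ have a quantitative modulus of continuity of order $\abs{1-z}^q$ (up to possible log factors in borderline cases). Consequently
\begin{equation*}
\frac{P(z)}{1-\lambda(z)} = \frac{\mu(\Delta_0)\Pi}{1-z} + H(z),
\end{equation*}
where $H(z)$ extends continuously to $\{\abs z\le 1\}$ and its boundary restriction has enough regularity that its Fourier coefficients decay like $O(n^{-q})$.

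Extracting the $n$th Taylor coefficient of $T(z)$ then yields an operator asymptotic
\begin{equation*}
T_n = \mu(\Delta_0)\,\Pi + E_n,\qquad \norm{E_n} = O(n^{-q}).
\end{equation*}
From this, both conclusions of the proposition follow immediately: $T_{n+1}-T_n = E_{n+1}-E_n$ has norm $O(n^{-q})$, and since $\Pi^2=\Pi$ and $\norm\Pi$ is bounded,
\begin{equation*}
T_n - \Pi T_n\Pi = (\mu(\Delta_0)\Pi + E_n) - (\mu(\Delta_0)\Pi + \Pi E_n\Pi) = E_n - \Pi E_n\Pi,
\end{equation*}
which also has norm $O(n^{-q})$.

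The main obstacle is the quantitative transfer from the regularity of $R(z)$ near $z=1$ to the $O(n^{-q})$ bound on the Fourier coefficients of $H(z)$: one must show that a single isolated singularity at $z=1$ of the prescribed strength leaves behind a remainder whose Fourier/Taylor coefficients decay at exactly the tail rate. This is the functional-analytic core of the renewal-sequence arguments of Sarig and Gou\"ezel that are cited, and the place where one has to be careful about the range $q\in(1,2)$ where $R(z)$ is only H\"older of exponent $q-1$ at $z=1$ and one does not have a genuine first derivative there.
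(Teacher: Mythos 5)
Your overall approach --- the operator renewal identity $\sum T_n z^n=(I-R(z))^{-1}$, quasi-compactness of $R(1)$ on Lipschitz functions, spectral perturbation near $z=1$, and a Fourier/Tauberian extraction --- is indeed the one used in the cited references (Sarig's decay paper and Gou\"ezel's thesis). The paper itself does not reprove the proposition; it simply cites those works.

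There is, however, a genuine error in your quantitative extraction step. You claim $T_n = \mu(\Delta_0)\Pi + E_n$ with $\norm{E_n}=O(n^{-q})$, and then deduce both conclusions from this. This intermediate claim is too strong: the rate of convergence of $T_n$ to $\mu(\Delta_0)\Pi$ is only $O(n^{-(q-1)})$, not $O(n^{-q})$, and this is sharp. The discrepancy is already present in scalar renewal theory: if $p_k$ is a probability on $\N^*$ with $\sum_{k>n}p_k \asymp n^{-q}$ and mean $m$, the renewal sequence $u_n$ satisfies $u_n-1/m \asymp n^{-(q-1)}$ and no better. In operator language, the slow part sits in the scalar sequence $t_n$ with $\Pi T_n\Pi=t_n\Pi$, for which summing the increments $\norm{T_{n+1}-T_n}=O(n^{-q})$ gives only $\abs{t_n-\mu(\Delta_0)}=O(n^{-(q-1)})$. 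In your Tauberian step this shows up in the remainder $H(z)=T(z)-\mu(\Delta_0)\Pi/(1-z)$: it behaves like $\abs{1-z}^{q-2}$ near $z=1$ (unbounded for $q<2$), and its Taylor coefficients decay only like $n^{-(q-1)}$.

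The two quantities in the proposition are each $O(n^{-q})$ precisely because each one cancels the slowly converging scalar contribution carried by $\Pi/(1-z)$: $T_{n+1}-T_n$ corresponds to multiplying the generating function by essentially $(1-z)$, which removes the pole, while $T_n-\Pi T_n\Pi$ annihilates the rank-one singular part since $\Pi\cdot\mu(\Delta_0)\Pi/(1-z)\cdot\Pi$ equals $\mu(\Delta_0)\Pi/(1-z)$ itself. So you cannot obtain both bounds as consequences of a common bound on $T_n-\mu(\Delta_0)\Pi$ (that bound is false at the needed order); each must be established by exhibiting its own cancellation in the generating-function decomposition. This is exactly where the cited renewal-operator arguments do extra work that your sketch elides.
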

In particular, $\norm{T_{n+1}-T_n}$ is summable, hence $T_n$
converges. Its limit is $\mu(\Delta_0) \Pi$.

Consider now a general function $u$ and a point $x\in \Delta_0$, we
wish to describe $\boL^n u(x) = \sum_{T^n y=x} g^{(n)}(y) u(y)$.
Splitting the trajectory of $y$ into a first part until the first
entrance in $\Delta_0$, of length $b\geq 0$, and then a second part
starting from $\Delta_0$ at time $b$ and coming back to $\Delta_0$ at
time $n$, we obtain a decomposition
  \begin{equation}
  \label{eq:decompose_boLn}
  1_{\Delta_0} \boL^n = \sum_{\ell+b=n} T_\ell B_b.
  \end{equation}
The operator $B_b$ is given by $B_b u(x) = \sum_{T^b
y=x}g^{(b)}(y) u(y)$, the sum being restricted to those preimages
whose first entrance in $\Delta_0$ is at time $b$ (the projection in
the basis of those points necessarily has $\phi>b$). By bounded
distortion, one gets
  \begin{equation}
  \label{eq:controle_Bb}
  \norm{B_b} \leq C \mu(\phi > b).
  \end{equation}

\subsection{Weak \texorpdfstring{$L^p$}{Lp} spaces}

If a function $u$ belongs to $L^p$ on a probability space, then
$\Pbb(\abs{u}>s) \leq s^{-p}\E(\abs{u}^p)$ by Markov's inequality. On
the other hand, this condition $\Pbb(\abs{u}>s) = O(s^{-p})$ is not
sufficient to belong to $L^p$. For instance, a stable law of index
$p\in (1,2)$ satisfies $\Pbb(\abs{Z}>s) \sim c s^{-p}$, it readily
follows that it does not belong to $L^p$.

We say that a random variable $u$ belongs to weak $L^p$ if
$\Pbb(\abs{u}>s) = O(s^{-p})$. We write
  \begin{equation*}
  \norm{u}^p_{L^{p,w}} = \sup_s s^p \Pbb(\abs{u}>s).
  \end{equation*}
This is the analogue of the $L^p$ norm in this context. It satisfies
$\norm{u}_{L^{p,w}}\leq \norm{u}_{L^p}$. In general,
$\norm{\cdot}_{L^{p,w}}$ is not a norm (i.e., it does not satisfy the
triangular inequality), however it is equivalent to a norm when $p>1$
(see for instance~\cite[Paragraph V.3]{stein_weiss_fourier}). The
weak $L^p$ space is a particular instance of \emph{Lorentz spaces},
corresponding to the space $L^{p,\infty}$ in the standard notation.

Apart from its natural appearance when considering stable laws, a
major role of the weak $L^p$ space comes from interpolation theory.
The following is a particular case of the Marcinkiewicz interpolation
theorem, see for instance~\cite[Theorem V.2.4]{stein_weiss_fourier}.
\begin{thm}
\label{thm:interpolation}
If a linear operator is bounded from $L^1(\mu)$ to $L^{1,w}(\nu)$ and
from $L^\infty(\mu)$ to $L^\infty(\nu)$, then it is bounded from
$L^p(\mu)$ to $L^p(\nu)$ for any $1<p<\infty$.
\end{thm}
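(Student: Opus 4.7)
The plan is to run the standard truncation argument that proves the Marcinkiewicz theorem, specialized to the present linear, endpoint-$(L^1\to L^{1,w},\,L^\infty\to L^\infty)$ situation. Write $C_1$ and $C_\infty$ for the norms at the two endpoints. Given $f\in L^p(\mu)$ and $s>0$, I would split $f=f_s+f^s$ with $f_s=f\cdot 1_{\{\abs{f}\leq s/(2C_\infty)\}}$ and $f^s=f-f_s$, so that $f_s$ is bounded by $s/(2C_\infty)$ in $L^\infty$ while $f^s$ carries the tail; the threshold $s/(2C_\infty)$ is chosen precisely so that the two endpoint bounds combine cleanly.

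By linearity and the $L^\infty$ endpoint, $\norm{Tf_s}_{L^\infty(\nu)}\leq C_\infty\cdot s/(2C_\infty)=s/2$, which forces $\{\abs{Tf}>s\}\subset\{\abs{Tf^s}>s/2\}$; the weak $L^1$ endpoint then gives
\[
  \nu\{\abs{Tf}>s\}\leq \frac{2C_1}{s}\int_{\{\abs{f}>s/(2C_\infty)\}}\abs{f}\dd\mu.
\]
I would then insert this into the layer cake formula~\eqref{eq:deduce_moments} and swap the order of integration by Fubini,
\[
  \int \abs{Tf}^p\dd\nu \leq 2pC_1 \int \abs{f}\int_0^{2C_\infty \abs{f}} s^{p-2}\dd s \dd\mu
  = \frac{2pC_1(2C_\infty)^{p-1}}{p-1}\int\abs{f}^p\dd\mu,
\]
which produces the claim with an explicit constant $\bigl(2pC_1(2C_\infty)^{p-1}/(p-1)\bigr)^{1/p}$.

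The role of the hypothesis $1<p<\infty$ is visible in the final inner integral: integrability of $s^{p-2}$ at $s=0$ demands $p>1$, while the upper limit $2C_\infty\abs{f}$ is finite a.e.\ only because $f\in L^p$ with $p<\infty$, and both $\mu$ and $\nu$ are $\sigma$-finite, legitimizing Fubini. I do not foresee any genuine obstacle: the argument is entirely classical, and because $T$ is assumed \emph{linear} rather than merely sublinear, I can avoid the only technical subtlety that could arise, namely the failure of the triangle inequality for the weak $L^1$ quasinorm when decomposing $T(f_s+f^s)$.
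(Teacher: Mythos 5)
Your proof is correct and complete: it is the standard truncation argument for the Marcinkiewicz theorem in this linear, $(1,\infty)$-endpoint case, with the correct explicit constant $\bigl(2pC_1(2C_\infty)^{p-1}/(p-1)\bigr)^{1/p}$. The paper gives no proof of its own --- it simply cites \cite{stein_weiss_fourier} --- and your argument is essentially the one found in that reference, so there is nothing substantive to compare.
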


This result can for instance be used to prove the boundedness of the
Hardy-Littlewood maximal function on any $L^p$ space, $1<p\leq
\infty$, since boundedness from $L^1$ to $L^{1,w}$ and from
$L^\infty$ to itself hold. We recall the statement in the case of
$\Z$, since we need it later on. See for instance~\cite[Theorem
II.3.7]{stein_weiss_fourier}.
\begin{thm}
\label{thm:maximal}
To a sequence $(u_n)_{n\in \Z}$, associate the sequence
  \begin{equation*}
  Mu(n) = \sup_{h\geq 0} \frac{1}{2h+1}\sum_{i=n-h}^{n+h} \abs{u_i}.
  \end{equation*}
For all $p\in (1,+\infty]$, there exists a constant $C$ such that
$\norm{M u}_{\ell^p} \leq C \norm{u}_{\ell^p}$ for any sequence $u\in
\ell^p$.
\end{thm}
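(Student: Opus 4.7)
The plan is to follow the standard path for the Hardy-Littlewood maximal inequality, adapted to the discrete setting: establish the two endpoint estimates ($\ell^1 \to \ell^{1,w}$ and $\ell^\infty \to \ell^\infty$) and then interpolate.

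The $\ell^\infty$ endpoint is immediate: for any $n$ and $h \geq 0$, the average $\frac{1}{2h+1}\sum_{i=n-h}^{n+h}\abs{u_i}$ is bounded by $\norm{u}_{\ell^\infty}$, so $\norm{Mu}_{\ell^\infty} \leq \norm{u}_{\ell^\infty}$. The real work is the weak-type $(1,1)$ inequality: there exists $C$ such that, for every $\lambda>0$,
\begin{equation*}
\Card\{n\in\Z \st Mu(n) > \lambda\} \leq \frac{C}{\lambda} \norm{u}_{\ell^1}.
\end{equation*}
For this I would use a Vitali-type covering argument. For each $n$ in the level set $E_\lambda=\{Mu>\lambda\}$, pick an $h_n\geq 0$ realizing the supremum up to a factor, so that the interval $I_n=[n-h_n,n+h_n]\cap\Z$ satisfies $\sum_{i\in I_n}\abs{u_i}>\lambda\Card(I_n)$. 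One then extracts, by a greedy procedure ordering intervals by decreasing length, a pairwise disjoint subcollection $\{I_{n_k}\}$ such that the tripled intervals $3I_{n_k}$ cover $E_\lambda$. Disjointness together with the defining inequality yields
\begin{equation*}
\Card(E_\lambda) \leq 3\sum_k \Card(I_{n_k}) < 3\lambda^{-1}\sum_k \sum_{i\in I_{n_k}}\abs{u_i} \leq 3\lambda^{-1}\norm{u}_{\ell^1}.
\end{equation*}

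Finally, to pass from the endpoints to $\ell^p\to\ell^p$ for $1<p<\infty$, I would interpolate directly (since $M$ is sublinear rather than linear, citing Theorem~\ref{thm:interpolation} verbatim is a bit awkward, but the same idea works by hand). Given $u\in\ell^p$ and $\lambda>0$, decompose $u = u^\lambda + u_\lambda$ where $u^\lambda = u\cdot 1_{\abs{u}>\lambda}$ and $u_\lambda=u\cdot 1_{\abs{u}\leq \lambda}$. Since $\norm{M u_\lambda}_{\ell^\infty}\leq \lambda$, the sublinearity of $M$ gives $\{Mu>2\lambda\}\subseteq\{Mu^\lambda>\lambda\}$, so by the weak-type bound
\begin{equation*}
\Card\{Mu>2\lambda\} \leq \frac{C}{\lambda}\sum_{i\,:\,\abs{u_i}>\lambda}\abs{u_i}.
\end{equation*}
Integrating this against $p\lambda^{p-1}\dd\lambda$ via the layer-cake formula and exchanging sum and integral produces $\norm{Mu}_{\ell^p}^p \leq \frac{2^p p C}{p-1}\norm{u}_{\ell^p}^p$, which is the desired inequality.

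The main obstacle is really the Vitali covering step: it has no genuine difficulty on $\Z$ (indeed the discrete setting is simpler than $\R^d$ since the intervals are just arithmetic progressions of length $2h+1$), but it is the only non-formal ingredient in the argument. Everything else is endpoint arithmetic and the standard distributional interpolation computation.
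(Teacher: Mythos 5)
Your proof is correct and follows the standard route: trivial $\ell^\infty$ bound, weak $(1,1)$ via a discrete Vitali covering, and Marcinkiewicz interpolation carried out by hand to accommodate the sublinearity of $M$. The paper does not prove this result itself --- it simply cites Stein--Weiss, Theorem~II.3.7 --- and the argument found there is the same one you give, so the approaches agree; the only point worth spelling out is that for $u\in\ell^1$ the level set $\{Mu>\lambda\}$ is finite (the averages over long intervals are small), which is what makes your greedy selection of disjoint intervals terminate.
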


\subsection{Martingale inequalities}

Given a decreasing sequence of $\sigma$-algebras $\boF_0 \supset
\boF_1 \supset \dotso$ on a probability space, a sequence of reverse
martingale differences with respect to this filtration is a sequence
of random variables $D_k$ such that $\E(D_k \given \boF_{k+1}) = 0$.
This is a kind of one-sided independence condition. Moment
inequalities, similar to classical inequalities for independent
random variables, hold in this setting.

We will use the following Burkholder-Rosenthal inequality:
\begin{thm}
\label{thm:BR}
For any $Q\geq 2$, there exists a constant $C$ such that any sequence
of reverse martingale differences satisfies
  \begin{equation*}
  \E\left|\sum D_k\right|^Q \leq C \E\left(\sum \E(D_k^2 \given \boF_{k+1})\right)^{Q/2}
  + C \E(\max \abs{D_k}^Q ).
  \end{equation*}
As a consequence,
  \begin{equation}
  \label{eq:burkh_useful}
  \E\left|\sum D_k\right|^Q \leq C \left(\sum \norm{\E(D_k^2 \given \boF_{k+1})}_{L^\infty}\right)^{Q/2}
  + C \sum \norm{\E(\abs{D_k}^Q \given \boF_{k+1})}_{L^\infty}.
  \end{equation}
\end{thm}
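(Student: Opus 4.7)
The plan is to deduce the reverse-martingale version from the classical (forward) Burkholder-Rosenthal inequality by a time-reversal of the filtration. For finitely many differences $D_0,\dots,D_N$, define $\boF'_k=\boF_{N-k+1}$ for $0\leq k\leq N+1$, which is an increasing filtration, and set $D'_k=D_{N-k}$. The hypothesis $\E(D_k\given \boF_{k+1})=0$ becomes $\E(D'_k\given \boF'_k)=0$, so that $(D'_k)$ is an ordinary martingale difference sequence with respect to $(\boF'_k)$. Since $\sum D_k=\sum D'_k$, any bound proved for forward martingales transfers verbatim, and the conditional second moments $\E(D_k^2\given \boF_{k+1})$ correspond to $\E((D'_k)^2\given \boF'_k)$. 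Infinite sums are then handled by truncating to the first $N$ terms, applying Fatou's lemma, and letting $N\to\infty$.

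For the forward case, I would cite (or reproduce) the standard Burkholder-Rosenthal inequality. The usual route is Burkholder's square-function inequality $\norm{\sum D'_k}_{L^Q}\leq c_Q \norm{(\sum(D'_k)^2)^{1/2}}_{L^Q}$, combined with the decomposition of $\sum (D'_k)^2$ into its predictable compensator $\sum \E((D'_k)^2\given \boF'_{k-1})$ plus the martingale remainder; controlling the remainder in $L^{Q/2}$ by induction on $Q$, together with a bound involving $\max\abs{D'_k}$, yields the stated inequality. Since this is a classical fact (Burkholder 1973, Rosenthal 1970), I expect simply to invoke it rather than redo the argument.

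The second assertion~\eqref{eq:burkh_useful} follows by estimating both terms of the first inequality pointwise. For the quadratic term,
\begin{equation*}
\E\Bigl(\sum \E(D_k^2\given \boF_{k+1})\Bigr)^{Q/2}
\leq \Bigl(\sum \norm{\E(D_k^2\given \boF_{k+1})}_{L^\infty}\Bigr)^{Q/2},
\end{equation*}
since the integrand is essentially bounded by the corresponding deterministic sum. For the maximum term, $\E(\max\abs{D_k}^Q)\leq \sum \E(\abs{D_k}^Q) = \sum \E\bigl[\E(\abs{D_k}^Q\given \boF_{k+1})\bigr]\leq \sum \norm{\E(\abs{D_k}^Q\given \boF_{k+1})}_{L^\infty}$.

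I do not expect any serious obstacle here: the only point requiring some care is checking that the time-reversal correctly identifies conditional expectations on both sides, and that the infinite-sum case reduces cleanly to the finite one. The rest is either a citation of a well-established martingale inequality or routine pointwise estimation.
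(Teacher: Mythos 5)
Your proposal is correct and follows essentially the same route as the paper, which simply cites Burkholder's Theorem~21.1 for the first inequality and notes that the weaker form follows readily. The time-reversal of the filtration is the standard (and tacit) step for passing between forward and reverse martingale-difference sequences, and your pointwise estimates for the second assertion — bounding the predictable quadratic variation by its $L^\infty$ sum, and bounding $\E(\max\abs{D_k}^Q)\leq\sum\E(\abs{D_k}^Q)$ followed by the tower property — are exactly the right (and only) observations needed.
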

The first statement is due to
Burkholder~\cite[Theorem~21.1]{burkholder}. The second (much weaker)
statement readily follows, and is sufficient for our purposes. One
interest of the second formulation is that the two terms look the
same: in the applications we have in mind, we will control
simultaneously $\norm{\E(D_k^2 \given \boF_{k+1})}_{L^\infty}$ and
$\norm{\E(\abs{D_k}^Q \given \boF_{k+1})}_{L^\infty}$.

For $Q\in (1,2)$, the (easier) analogue of the above theorem is the
inequality of von Bahr and Esseen~\cite{vonbahr_esseen} stating that
  \begin{equation*}
  \E\abs*{\sum D_k}^Q \leq C \sum \E\abs{D_k}^Q.
  \end{equation*}
However, we will rather need a version of this inequality involving
weak $L^Q$ norms (since the main part of
Theorem~\ref{main_thm:moments} in the case $q<2$ is the
inequality~\eqref{eq:weak_q}, controlling the weak $L^q$ norm of $S_n
f$). Such an inequality holds:
\begin{thm}
\label{prop:weak_vBE}
For any $Q\in (1,2)$, there exists a constant $C$ such that any
sequence of reverse martingale differences $D_k$ satisfies
  \begin{equation}
  \label{eq:vBE}
  \norm*{\sum D_k}^Q_{L^{Q,w}} \leq C \sum_k \norm{D_k}^Q_{L^{Q,w}}.
  \end{equation}
\end{thm}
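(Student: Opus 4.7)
The plan is to reduce the weak-type inequality to the classical $L^2$ martingale estimate (orthogonality of reverse martingale differences) and to the trivial $L^1$ triangle inequality, via a truncation argument at level $t$ that preserves the reverse-martingale structure. Fix $t>0$ and, for each $k$, decompose $D_k = D_k' + D_k''$ where
\begin{equation*}
D_k' = D_k \mathbf{1}_{\abs{D_k}\leq t} - \E(D_k \mathbf{1}_{\abs{D_k}\leq t}\given \boF_{k+1}),\qquad
D_k'' = D_k \mathbf{1}_{\abs{D_k}>t} - \E(D_k \mathbf{1}_{\abs{D_k}>t}\given \boF_{k+1}).
\end{equation*}
Since $\E(D_k\given \boF_{k+1})=0$, these centerings sum to $D_k$, and both $(D_k')$ and $(D_k'')$ are again reverse martingale differences with respect to $(\boF_k)$. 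One then writes
\begin{equation*}
\Pbb\Bigl(\Bigl|\sum D_k\Bigr|>2t\Bigr) \leq \Pbb\Bigl(\Bigl|\sum D_k'\Bigr|>t\Bigr)+\Pbb\Bigl(\Bigl|\sum D_k''\Bigr|>t\Bigr),
\end{equation*}
and treats the two terms separately.

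For the bounded part, orthogonality of reverse martingale differences gives $\E\bigl|\sum D_k'\bigr|^2 = \sum \E\abs{D_k'}^2$, and the variance-reducing property of conditional expectation yields $\E\abs{D_k'}^2 \leq \E(D_k^2 \mathbf{1}_{\abs{D_k}\leq t})$. Writing $A_k=\norm{D_k}_{L^{Q,w}}$ and integrating by parts,
\begin{equation*}
\E(D_k^2 \mathbf{1}_{\abs{D_k}\leq t}) = \int_0^t 2s\,\Pbb(\abs{D_k}>s,\abs{D_k}\leq t)\dd s \leq \int_0^t 2s\min(1,A_k^Q s^{-Q})\dd s \leq C_Q A_k^Q t^{2-Q}
\end{equation*}
(splitting the integral at $\min(A_k,t)$). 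By Chebyshev, $\Pbb(\abs{\sum D_k'}>t)\leq C_Q t^{-Q}\sum A_k^Q$.

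For the large part, I avoid $L^2$ and use the triangle inequality and Markov:
\begin{equation*}
\Pbb\Bigl(\Bigl|\sum D_k''\Bigr|>t\Bigr)\leq t^{-1}\sum\E\abs{D_k''}\leq 2t^{-1}\sum \E(\abs{D_k}\mathbf{1}_{\abs{D_k}>t}).
\end{equation*}
Another integration by parts gives $\E(\abs{D_k}\mathbf{1}_{\abs{D_k}>t}) = t\Pbb(\abs{D_k}>t)+\int_t^\infty \Pbb(\abs{D_k}>s)\dd s$, which after splitting at $\max(t,A_k)$ is bounded by $C_Q A_k^Q t^{1-Q}$ (the key point being that $Q>1$ makes the tail integral converge). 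Hence $\Pbb(\abs{\sum D_k''}>t)\leq C_Q t^{-Q}\sum A_k^Q$ as well.

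Combining yields $(2t)^Q \Pbb(\abs{\sum D_k}>2t)\leq C_Q \sum A_k^Q$ uniformly in $t$, which is \eqref{eq:vBE}. There is no real obstacle beyond keeping track of cases in the two integrals: the argument is the classical weak-type truncation used in Marcinkiewicz-style interpolation, and the only thing to verify carefully is that the centered truncated blocks remain reverse martingale differences, which is immediate from the definition.
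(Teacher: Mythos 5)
Your proof is correct, and it takes a genuinely different and more elementary route than the paper's. The paper establishes \eqref{eq:vBE} by quoting two results from the literature: a theorem of Braverman showing that $L^{Q,\infty}$ has the von Bahr--Esseen property of index $Q$ for \emph{independent} centered random variables, and a comparison theorem of Astashkin bounding $\norm{\sum D_k}_{L^{Q,w}}$ by the corresponding quantity for independent copies of the $D_k$. You instead give a self-contained truncation argument of the classical Marcinkiewicz type: you split each $D_k$ at level $t$ and recenter each piece so that both pieces remain reverse martingale differences (the two conditional means cancel because $\E(D_k\given\boF_{k+1})=0$), estimate the bounded piece in $L^2$ via the orthogonality of reverse martingale differences together with the integral bound $\int_0^t 2s\min(1,A_k^Q s^{-Q})\dd s \leq C_Q A_k^Q t^{2-Q}$, and estimate the large piece in $L^1$ via the triangle inequality and $t\,\Pbb(\abs{D_k}>t)+\int_t^\infty \Pbb(\abs{D_k}>s)\dd s \leq C_Q A_k^Q t^{1-Q}$. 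Your approach is shorter, needs no outside machinery, and makes the roles of the two endpoints $Q\in(1,2)$ transparent ($Q<2$ for the truncated-$L^2$ integral, $Q>1$ for the tail integral). One implicit hypothesis worth being aware of, common to both proofs and to the paper's applications, is that $D_k$ is $\boF_k$-measurable: the paper's definition only records $\E(D_k\given\boF_{k+1})=0$, but adaptedness is what makes the $L^2$-orthogonality in your argument (and the Burkholder estimates elsewhere in the paper) valid, and it holds automatically in all the situations where the theorem is applied.
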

\begin{proof}
This is a consequence of existing results in the literature, as we
now explain. First, the $L^{Q,w}$-seminorm is not a norm, which can
be a problem for the proof of inequalities involving an arbitrary
number of terms. However, it is equivalent to a true norm, the
Lorentz norm $L^{Q,\infty}$ (see~\cite[Paragraph
V.3]{stein_weiss_fourier})), so this is not an issue.

\cite[Theorem~7(1) on Page 39]{braverman_rearrangement} proves that
the space $L^{Q,w}=L^{Q,\infty}$ satisfies the von Bahr-Esseen
property of index $Q$, i.e., the inequality~\eqref{eq:vBE} holds
whenever the $D_k$ are independent centered random variables.

Consider now a sequence of reverse martingale differences $D_k$. Let
$(\tilde D_k)$ be independent random variables, such that $\tilde
D_k$ is distributed as $D_k$. \cite[Theorem
6.1]{astashkin_weak_lp_vonbahr} shows that
  \begin{equation*}
  \norm*{\sum D_k}_{L^{Q,w}} \leq C \norm*{\sum \tilde D_k}_{L^{Q,w}}.
  \end{equation*}
As the random variables $\tilde D_k$ are independent, they
satisfy~\eqref{eq:vBE} by~\cite{braverman_rearrangement}. The same
inequality follows for $D_k$.
\end{proof}

\subsection{Miscellaneous}

We use repeatedly the following classical lemma, which is
readily proved by a discrete integration by parts.
\begin{lem}
\label{lem:karamata}
Let $c_h$ be a sequence of nonnegative real numbers with $\sum_{h>n}
c_h = O(n^{-q})$ for some $q>1$. Then, for all $\alpha<q$, one has
$\sum_{h>n} h^\alpha c_h = O(n^{\alpha-q})$. Moreover, for all
$\alpha>q$, one has $\sum_{h<n} h^\alpha c_h = O(n^{\alpha-q})$.
Finally, $\sum_{h<n} h^q c_h=O(\log n)$.
\end{lem}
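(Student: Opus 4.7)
The plan is to apply discrete integration by parts (Abel summation) in a uniform way, and then separate the three regimes according to the sign of $\alpha-q$. Introduce the tail $R_n = \sum_{h>n} c_h$, so by hypothesis $R_n = O(n^{-q})$, and write $c_h = R_{h-1}-R_h$. The identity to start from, valid for every $1\leq m\leq N$, is the telescoping computation
\begin{equation*}
\sum_{h=m}^{N} h^\alpha c_h \;=\; m^\alpha R_{m-1} \;-\; N^\alpha R_N \;+\; \sum_{h=m}^{N-1}\bigl((h+1)^\alpha-h^\alpha\bigr) R_h,
\end{equation*}
together with the elementary mean value estimate $(h+1)^\alpha-h^\alpha = O(h^{\alpha-1})$ for $h\geq 1$.

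For the first statement, with $\alpha<q$, I would take $m=n+1$ and let $N\to\infty$. Since $N^\alpha R_N = O(N^{\alpha-q})\to 0$, the boundary term at infinity vanishes, leaving
\begin{equation*}
\sum_{h>n} h^\alpha c_h \;=\; (n+1)^\alpha R_n \;+\; \sum_{h>n}\bigl((h+1)^\alpha-h^\alpha\bigr) R_h.
\end{equation*}
The first term is $O(n^{\alpha-q})$. For the second, insert $(h+1)^\alpha-h^\alpha\ll h^{\alpha-1}$ and $R_h\ll h^{-q}$, so the series is dominated by $\sum_{h>n} h^{\alpha-1-q}$; the exponent is less than $-1$, and a standard comparison with an integral yields $O(n^{\alpha-q})$.

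For the other two statements, I would apply the same identity with $m=1$ and $N=n-1$, giving
\begin{equation*}
\sum_{h=1}^{n-1} h^\alpha c_h \;=\; R_0 \;-\; (n-1)^\alpha R_{n-1} \;+\; \sum_{h=1}^{n-2}\bigl((h+1)^\alpha-h^\alpha\bigr) R_h.
\end{equation*}
Both $R_0$ and the boundary term $(n-1)^\alpha R_{n-1}=O(n^{\alpha-q})$ are absorbed into the target bound. The remaining sum is again controlled by $\sum_{h=1}^{n-2} h^{\alpha-1-q}$: when $\alpha>q$ the exponent exceeds $-1$ so this grows like $n^{\alpha-q}$, and when $\alpha=q$ it is a harmonic-type sum of order $\log n$.

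There is no real obstacle; the only care needed is the bookkeeping of boundary terms and verifying which regime of the exponent $\alpha-1-q$ one sits in, so that one knows whether the key comparison sum converges, diverges polynomially, or diverges logarithmically.
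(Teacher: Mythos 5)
Your proof is correct and is precisely the discrete integration by parts that the paper invokes (the paper itself gives no details, stating only that the lemma is "readily proved by a discrete integration by parts"). The Abel summation identity you set up is right, and the three cases are handled cleanly by the sign of the exponent $\alpha-1-q$ in the comparison sum. One small point worth making explicit: in the first case you pass $N\to\infty$ after dropping the boundary term, so you should observe that $\sum_{h>n}\abs{(h+1)^\alpha-h^\alpha}R_h$ converges (which it does, since $\alpha-1-q<-1$), and that all terms $h^\alpha c_h$ are nonnegative, so the partial sums $\sum_{h=n+1}^{N}h^\alpha c_h$ increase to a finite limit bounded by $(n+1)^\alpha R_n+\sum_{h>n}\abs{(h+1)^\alpha-h^\alpha}R_h=O(n^{\alpha-q})$. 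For $\alpha<0$ you should also write $\abs{(h+1)^\alpha-h^\alpha}$ rather than $(h+1)^\alpha-h^\alpha$ (the difference is negative there), though this changes nothing in the estimate.
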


We also use the following fact: If $c_n$ is a summable sequence of
nonnegative real numbers and $p\geq1$,
  \begin{equation}
  \label{eq:stable_power}
  \abs*{\sum c_n u_n}^p \leq \left(\sum c_n\right)^{p-1} \sum c_n \abs{u_n}^p.
  \end{equation}
Indeed, this follows from the convexity of $x\mapsto x^p$ for $\sum
c_n=1$, and the general case follows.

\section{Moment bounds}
\label{sec:moment}

Our goal in this section is to prove Theorem~\ref{main_thm:moments}.
We therefore fix a Young tower with $\tail_n=O(n^{-q})$ for some
$q>1$.

The convolution of two sequences $(c_n)_{n\geq 0}$ and $(d_n)_{n \geq
0}$ is the sequence $c\star d$ given by
  \begin{equation*}
  (c\star d)_n = \sum_{i=0}^n c_i d_{n-i}.
  \end{equation*}

We write $c_n^{(q)}$ for a generic sequence of the form
$C/(n+1)^{q}$, for a generic $C$ that can change from one occurrence
to the next, even on the same line, but only finitely many times in
the whole article. We use repeatedly the fact that the
convolution of two such sequences is bounded by a sequence of the
same form. This fact reads
  \begin{equation}
  \label{seq:convole}
  (c^{(q)}\star c^{(q)})_n = \sum_{i=0}^n c^{(q)}_i c^{(q)}_{n-i} \leq c_n^{(q)}.
  \end{equation}
(Note that the sequence $c_n^{(q)}$ on the right is not the same as
the sequences on the left, in accordance with the above convention.)

We wish to understand the moments of Birkhoff sums $S_n f$. Since
martingale inequalities are very powerful, we will reduce to such
martingales in the most naive way. Let $\boF_k = T^{-k}(\boF_0)$
(where $\boF_0$ is the Borel $\sigma$-algebra), a function is
$\boF_k$-measurable if and only if it can be written as $u\circ T^k$
for some function $u$. We have
  \begin{equation*}
  S_n f = \sum_{k=0}^{n-1} (\E(S_n f \given \boF_k) - \E(S_n f \given \boF_{k+1}))
  + \E(S_n f \given \boF_n)
  = \sum_{k=0}^{n-1} A_k \circ T^k + \E(S_n f \given \boF_n),
  \end{equation*}
for some functions $A_k$ that we now describe. Note that this is
a decomposition as a sum of reverse martingale differences, hence the
moments of $S_n f$ will essentially be controlled by those of $A_k$.

Let $\boL$ be the transfer operator, it satisfies $\E(u \given
\boF_1) = (\boL u) \circ T$. Hence, for $k<n$,
  \begin{equation*}
  \E(S_n f \given \boF_k) - \E(S_n f \given \boF_{k+1})
  = \left(\sum_{i=0}^{k} \boL^i f\right)\circ T^k
    - \left(\sum_{i=1}^{k+1} \boL^i f\right) \circ T^{k+1},
  \end{equation*}
giving
  \begin{equation}
  \label{eq:Ak}
  A_k = \sum_{i=0}^{k} \boL^i f - \left(\sum_{i=1}^{k+1} \boL^i f\right)\circ T.
  \end{equation}

Let us define a function $F_k= \sum_{i=0}^{k} \boL^i f$, this is the
main function to understand.
\begin{lem}
\label{lem_Fk}
If $x$ is at height $h$ and $Tx \in \Delta_0$, then
  \begin{equation*}
  F_k(x) - F_{k+1}(Tx) = O(1+ h\wedge k).
  \end{equation*}
\end{lem}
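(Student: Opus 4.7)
The argument rests on the fact that $T$ ascends the tower deterministically with unit Jacobian, so that for $x$ at height $h$ the transfer operator simplifies to $\boL^i f(x) = f(T^{-i}x)$ for $0\leq i\leq h$, and hence $\boL^i f(x) = \boL^{i-h}f(T^{-h}x)$ for all $i\geq h$, where $T^{-h}x\in\Delta_0$. This decomposes the sums $F_k(x)$ cleanly into a ``deterministic column'' contribution and a contribution living on the basis.

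In the regime $k\leq h$ the bound is immediate: since $\norm{\boL^i f}_{L^\infty}\leq\norm{f}_{L^\infty}\leq 1$, one has $\abs{F_k(x)}\leq k+1$ and $\abs{F_{k+1}(Tx)}\leq k+2$, hence $\abs{F_k(x)-F_{k+1}(Tx)}\leq 2k+3 = O(1+h\wedge k)$.

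For $k>h$, the column identity yields
\[
F_k(x) = \sum_{i=0}^{h-1} f(T^{-i}x) + F_{k-h}(T^{-h}x),
\]
whose first sum is at most $h\norm{f}_{L^\infty} = O(h)$. The task then reduces to proving the core comparison $\abs{F_{k-h}(T^{-h}x) - F_{k+1}(Tx)} = O(1+h)$, where $T^{-h}x$ and $Tx$ both lie in $\Delta_0$ and are linked by the first-return relation $Tx = T^{h+1}(T^{-h}x)$. My plan is to write each $\boL^n f$ on $\Delta_0$ via the renewal decomposition \eqref{eq:decompose_boLn} as $\sum_{\ell+b=n}T_\ell B_b f$, separate off the $O(h+1)$ extra index pairs by which the two partial sums differ, and on the common range exploit the renewal estimate $\norm{T_{n+1}-T_n}=O(n^{-q})$ of Proposition~\ref{prop:conseq_banach} together with the operator bound $\norm{B_b}\leq C\mu(\phi>b)$ from Section~\ref{subsec:renewal} to extract the necessary cancellations.

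The main obstacle will be this last step. The partial sums $F_m$ restricted to $\Delta_0$ need not be uniformly bounded in $m$ when $q\in(1,2]$, so the two evaluations cannot be estimated separately and one must produce term-by-term cancellation from the renewal structure. A convenient reformulation is the identity
\[
A_k(x) = \sum_{Tz=Tx,\,z\neq x}g(z)\bigl(F_k(x)-F_k(z)\bigr),
\]
which follows from $\sum_{Tz=Tx}g(z)=1$ and expresses the quantity of interest as a weighted average of differences across sibling preimages of $Tx$. Applying the column decomposition to each $F_k(z)$ and using that the weights $g(z)$ suppress contributions from tall sibling columns (thereby bringing in the tail bound on $\mu(\phi>b)$) should close the proof.
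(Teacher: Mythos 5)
Your reduction to the case $k>h$ via the $L^\infty$ bound, and your column identity $F_k(x)=\sum_{i=0}^{h-1}f(T^{-i}x)+F_{k-h}(T^{-h}x)$, match the paper exactly and are correct. You also correctly diagnose that the whole difficulty is the comparison on $\Delta_0$, and that $F_m\restr{\Delta_0}$ cannot be bounded in $m$ when $q\le 2$, so separate estimation fails. But the step you flag as ``the main obstacle'' is in fact not closed by either of your two suggestions, and the estimate you name would not do the job.

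The gap is this. After the column reduction you must compare $F_{k-h}$ at $T^{-h}x\in\Delta_0$ with $F_{k+1}$ at the \emph{different} basis point $Tx\in\Delta_0$. Your first plan invokes $\norm{T_{n+1}-T_n}=O(n^{-q})$, but that estimate compares $T_\ell$ to $T_{\ell+1}$ at the \emph{same} point; it gives no control on the spatial variation of $T_\ell B_b f$ across $\Delta_0$. The estimate that is actually needed is the \emph{other} half of Proposition~\ref{prop:conseq_banach}, namely $\norm{T_\ell-\Pi T_\ell\Pi}=O(\ell^{-q})$. The mechanism (which the paper uses and your sketch does not articulate) is to split $T_\ell=\Pi T_\ell\Pi+E_\ell$, so that
\[
1_{\Delta_0}F_m=\sum_{\ell+b\le m}\Pi T_\ell\Pi B_b f+\sum_{\ell+b\le m}E_\ell B_b f
=\sum_{\ell+b\le m}t_\ell u_b(f)+O(1),
\]
where the first sum is a \emph{scalar} depending only on $m$ (hence the evaluation point drops out entirely) and the $O(1)$ is uniform because $\norm{E_\ell}=O(\ell^{-q})$ and $\norm{B_bf}=O(b^{-q})$ are both summable, so their convolution is uniformly bounded. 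Once $F_m\restr{\Delta_0}$ is a scalar plus a uniform $O(1)$, the desired difference is just $\sum_{k-h<\ell+b\le k+1}t_\ell u_b(f)+O(1)$, and since each $b$ contributes at most $h+1$ values of $\ell$ with $t_\ell$ bounded and $\sum_b\abs{u_b(f)}<\infty$, this is $O(1+h)$. Note that $\norm{T_{n+1}-T_n}=O(n^{-q})$ only yields $\norm{T_n-\Pi T_n\Pi}=O(n^{-(q-1)})$ by summing tails, which is \emph{not} summable for $q\le 2$ and therefore does not give the uniform $O(1)$ you need; the $O(n^{-q})$ bound on $T_n-\Pi T_n\Pi$ is a genuinely stronger statement and is indispensable here.

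Your second plan, rewriting $A_k$ as a $g$-weighted average of $F_k(x)-F_k(z)$ over sibling preimages $z$ (you are missing a harmless $-f(Tx)$ term in that identity), does not avoid this issue: after applying the column decomposition to each $z$ you are left with $\sum_z g(z)\bigl[F_{k-h}(\pi x)-F_{k-h_z}(\pi z)\bigr]$, again a comparison of $F_m$ at distinct basis points, so the same $\Pi T_\ell\Pi$ extraction is required. The weights $g(z)$ and the tail bound $\mu(\phi>b)$ control the column contributions $\sum_z g(z)h_z=O(1)$, but they do nothing for the $\Delta_0$ comparison. In short, the proposal has the right skeleton but the decisive step --- replacing $1_{\Delta_0}F_m$ by a point-independent scalar up to a uniformly bounded error via $\norm{T_\ell-\Pi T_\ell\Pi}=O(\ell^{-q})$ --- is missing, and the tools you cite would not supply it.
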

\begin{proof}
Clearly $\norm{F_k-F_{k+1}\circ T}_{L^\infty}\le 2(1+k)\norm{f}_{L^\infty}$.
We have to prove that $F_k(x) - F_{k+1}(Tx) = O(1+h)$.

First, we estimate $1_{\Delta_0}F_k$.
We use the formalism of renewal
transfer operators introduced in Paragraph~\ref{subsec:renewal}. As
in~\eqref{eq:decompose_boLn}, we write $1_{\Delta_0} \boL^n =
\sum_{\ell+b=n} T_\ell B_b$, where $T_\ell$ counts the returns to the
basis at time $\ell$, and $B_b$ is an average over preimages at time
$b$ that did not return to the basis in between. Write $\Pi$ for the
projection on constant functions on $\Delta_0$.
Proposition~\ref{prop:conseq_banach} shows that the operator $E_\ell
= T_\ell-\Pi T_\ell \Pi$ satisfies $\norm{E_\ell} \leq c^{(q)}_\ell$.
We get
  \begin{equation*}
  1_{\Delta_0}F_k = \sum_{i=0}^{k} 1_{\Delta_0}\boL^i f
  =\sum_{\ell + b \le k} T_\ell B_b f
  =\sum_{\ell + b \le k} \Pi T_\ell \Pi B_b f + \sum_{\ell + b \le k}E_\ell B_b f.
  \end{equation*}
Since $\norm{B_b f} \leq c_b^{(q)}$ by~\eqref{eq:controle_Bb} and
$\norm{E_\ell} \leq c_\ell^{(q)}$, the second sum is uniformly
$O(1)$. For the first sum, the function $\Pi T_\ell \Pi B_b f$ is
constant by definition, and can be written as $t_\ell u_b(f)$ for
$t_\ell = \int_{\Delta_0} T_\ell 1 \dd\mu/\mu(\Delta_0)$ and $u_b(f)
= \int_{\Delta_0} B_b f \dd\mu/\mu(\Delta_0)$. We have obtained
  \begin{equation*}
  1_{\Delta_0}F_k = \sum_{\ell + b\le k} t_\ell u_b(f) + O(1),
  \end{equation*}
where $t_\ell$ is uniformly bounded, and $u_b(f)$ is summable (with
sum at most $\int \abs{f}\dd\mu/\mu(\Delta_0)$).

Consider now an arbitrary $x$, at height $h<k$, and with $Tx \in
\Delta_0$. Then $F_k(x) = F_{k-h}(\pi x) + O(h)$ where $\pi x$ is the
projection of $x$ in the basis of the tower, i.e., the unique
preimage of $x$ under $T^h$. We get
  \begin{align*}
  F_k(x) - F_{k+1}(Tx) &=
  \sum_{\ell +b \le  k-h} t_\ell u_b(f) -\sum_{\ell+b \le  k+1} t_\ell u_b(f) + O(1+h)
  \\&
  = \sum_{k-h< \ell+b \leq k+1} t_\ell u_b(f) + O(1+h).
  \end{align*}
For each $b$, there are at most $h+1$ values of $\ell$ for which
$k-h< \ell+b \leq k+1$. Since $t_\ell$ is bounded, we obtain
  \begin{equation*}
  \abs{F_k(x) - F_{k+1}(Tx)} \leq (h+1) \sum_b \abs{u_b(f)} +O(1+h) = O(1+h).
  \qedhere
  \end{equation*}
\end{proof}

\subsection{The case \texorpdfstring{$q>2$}{q>2}}

In this paragraph, we prove Theorem~\ref{main_thm:moments} in the
case $q>2$. It suffices to prove the desired estimate for $p=2q-2$,
since the other estimates follow
using~\eqref{eq:deduce_moments_trivial}.

We start from the decomposition
  \begin{equation}
  \label{eq:dec_Snf}
  S_n f=\sum A_k\circ T^k + \E(S_nf \given \boF_n).
  \end{equation}

First, we control the last term, which is easier. Write $Q=2q-2$, we
have
  \begin{equation*}
  \norm{\E(S_nf \given \boF_n)}_{L^Q}
  =\norm*{\left(\sum_{k=1}^n \boL^k f\right)\circ T^n}_{L^Q}
  =\norm*{\sum_{k=1}^n \boL^k f}_{L^Q}
  \leq \sum_{k=1}^n \norm{\boL^k f}_{L^Q}.
  \end{equation*}
One can use transfer operators techniques, or argue directly as
in~\cite{melbourne_nicol_large_deviations}: since the speed of decay
of correlations against bounded functions is $O(1/n^{q-1})$
by~\cite{lsyoung_recurrence}, we have
  \begin{equation*}
  \int \abs{\boL^k f}^Q \dd\mu \leq \norm{f}_\infty^{Q-1} \int \sgn(\boL^k f)\cdot \boL^k f \dd\mu
  \leq C/k^{q-1}.
  \end{equation*}
Hence, $\norm{\boL^k f}_{L^Q} \leq C/k^{(q-1)/Q}=k^{-1/2}$, giving
$\norm{\E(S_nf \given \boF_n)}_{L^Q} \leq C n^{1/2}$.

Then, we turn to the first sum $\sum A_k \circ T^k =\sum D_k$
in~\eqref{eq:dec_Snf}. It is a sum of reverse martingale differences,
hence we may apply Burkholder-Rosenthal inequality in the form
of~\eqref{eq:burkh_useful}:
  \begin{equation}
  \label{eq:BR}
  \E\abs*{\sum D_k}^Q \leq C \left(\sum \norm{\E(D_k^2 \given \boF_{k+1})}_{L^\infty}\right)^{Q/2}
  + C \sum \norm{\E(\abs{D_k}^Q \given \boF_{k+1})}_{L^\infty}.
  \end{equation}
For $r\in \{2, Q\}$, we have $\E(\abs{D_k}^r \given \boF_{k+1})(y) =
\boL(\abs{A_k}^r)(T^{k+1}y)$. This implies $\sum \norm{\E(\abs{D_k}^r
\given \boF_{k+1})}_{L^\infty} =
\norm{\boL(\abs{A_k}^r)}_{L^\infty}$.

Consider a point $x\in X$. If it does not belong to $\Delta_0$, it
has a unique preimage $z$, and moreover $A_k(z)=0$. Hence,
$\boL(\abs{A_k}^r)(x)=0$. Suppose now $x\in \Delta_0$. Let $z_\alpha$
denote its preimages (with respective heights $h_\alpha-1$).
Lemma~\ref{lem_Fk} gives $A_k(z_\alpha) = O(1+h_\alpha\wedge k)$.
Hence,
  \begin{equation*}
  \boL(\abs{A_k}^r)(x) = \sum_\alpha g(z_\alpha) \abs{A_k(z_\alpha)}^r
  \leq C + \sum g(z_\alpha) (h_\alpha\wedge k)^r
  \leq C \int_{\Delta_0} (\phi \wedge k)^r \dd\mu.
  \end{equation*}
We have proved that
  \begin{equation}
  \label{eq:controle_Ak2}
  \norm{\E(\abs{D_k}^r \given \boF_{k+1})}_{L^\infty} \leq C\int_{\Delta_0} (\phi \wedge k)^r \dd\mu.
  \end{equation}
We use this inequality to estimate the two sums on the right hand
side of~\eqref{eq:BR}. For $r=2$, the above integral is uniformly
bounded since $\phi$ has a moment of order $2$. Hence, the first sum
in~\eqref{eq:BR} is bounded by $n^{Q/2}=n^{q-1}$. For $r=Q=2q-2$, the
above integral is bounded by $k^{q-2}$ thanks to
Lemma~\ref{lem:karamata}. Summing over $k$, it follows that the
second sum in~\eqref{eq:BR} is bounded by $n^{q-1}$, as desired.
\qed

\subsection{The case \texorpdfstring{$q<2$}{q<2}}

In this paragraph, we prove Theorem~\ref{main_thm:moments} in the
case $q\in (1,2)$. Again, it suffices to prove the
estimate~\eqref{eq:weak_q} regarding the weak $q$-moment, i.e.,
$\norm{S_n f}_{L^{q,w}} \leq C n^{1/q}$, since the other estimates
follow using~\eqref{eq:deduce_moments}.

We start again from the decomposition $S_n f=\sum A_k\circ T^k +
\E(S_n f \given \boF_n)$. We rely on the von Bahr-Esseen result
for weak moments given in Theorem~\ref{prop:weak_vBE}, for $Q=q$.

First, we control the last term, as above: we have $\norm{\E(S_nf
\given \boF_n)}_{L^q} \leq \sum_{k=1}^n \norm{\boL^k f}_{L^q}$.
Moreover, we have as above $\norm{\boL^k f}_{L^q} \leq
C/k^{(q-1)/q}$. Summing over $k$,
  \begin{equation*}
  \norm{\E(S_nf \given \boF_n)}_{L^q} \leq C\sum_{k=1}^n k^{1/q-1}\leq C n^{1/q}.
  \end{equation*}
As the weak $L^q$-norm is dominated by the strong $L^q$-norm, this is
the desired control.

Now, we turn to the contribution of $A_k$. We want to estimate
$\norm{A_k}_{L^{q,w}}$. If $Tx\not\in \Delta_0$, then $A_k(x)=0$. If
$Tx\in \Delta_0$, then $\abs{A_k(x)} \leq C(1+k\wedge h)\leq C(1+h)$
by Lemma~\ref{lem_Fk}. Hence, for $s$ larger than a fixed constant,
  \begin{equation*}
  \mu\{x \st \abs{A_k(x)} \geq s\} \leq C \mu\{y\in \Delta_0 \st \phi(y) \geq C^{-1} s\}=O(s^{-q}).
  \end{equation*}
This shows that $\norm{A_k}_{L^{q,w}}$ is uniformly bounded. Summing
over $k$ and using Theorem~\ref{prop:weak_vBE}, we get $\norm{\sum
A_k\circ T^k}^q_{L^{q,w}} \leq Cn$, as desired.
\qed

\subsection{The case \texorpdfstring{$q=2$}{q=2}}

In this paragraph, we prove Theorem~\ref{main_thm:moments} in the
case $q=2$. Contrary to the previous cases, it is not sufficient to
prove the result at the critical exponent $p=2$, one should also
control all $p>2$. The arguments in the proof of the case $q>2$
(notably Burkholder's inequality~\eqref{eq:BR} combined
with~\eqref{eq:controle_Ak2}) give, for a general $p\geq 2$,
  \begin{equation}
  \label{eq:q=2}
  \norm{S_n f}_{L^p}^p \leq C \left(\sum_{k=0}^{n-1} \int_{\Delta_0}(\phi \wedge k)^2\right)^{p/2} \dd\mu
  + C \sum_{k=0}^{n-1}\int_{\Delta_0}(\phi \wedge k)^p \dd\mu
  + \left(\sum_{k=1}^n \norm{\boL^k f}_{L^p}\right)^p.
  \end{equation}
First, we have $\int \abs{\boL^k f}^p \dd\mu\leq C/k$ since the speed
of decay of correlations is $1/k$. Hence, $\norm{\boL^k f}_{L^p} \leq
k^{-1/p}$ and the last term in~\eqref{eq:q=2} is bounded by
$n^{p-1}$.

Let us now deal with $p=2$. Lemma~\ref{lem:karamata} gives
$\int_{\Delta_0}(\phi \wedge k)^2 \dd\mu \leq \log k$ since we are
precisely at the critical exponent for which there is an additional
logarithmic factor. Summing over $k$ and using~\eqref{eq:q=2}, we
obtain $\norm{S_n f}_{L^2}^2 \leq n \log n$ as desired.

Consider then $p>2$. Again, $\int_{\Delta_0}(\phi \wedge k)^2 \dd\mu
\leq \log k$, hence the first sum in~\eqref{eq:q=2} gives a
contribution $C(n\log n)^{p/2}$, which is bounded by $C n^{p-1}$ as
$p/2<p-1$. For the second sum in~\eqref{eq:q=2},
Lemma~\ref{lem:karamata} gives $\int (\phi \wedge k)^p \dd\mu \leq C
k^{p-2}$. Summing over $k$, we get a bound $n^{p-1}$.
\qed

\section{Concentration bounds}
\label{sec:concentration}

In this section, we prove Theorem~\ref{main_thm:concentration} about
concentration inequalities in Young towers with $\tail_n=O(n^{-q})$
for some $q>1$. As before, we write $c_n^{(q)}$ for a generic
sequence that is $O(n^{-q})$.

Consider a general function $K(x_0,x_1,\dotsc)$ which is separately
Lipschitz in each variable, with corresponding constants $\Lip_i(K)$.
Fix any reference point $x_*$ in the space.

To study the magnitude of $K(x,Tx,\dotsc)$, the idea is to decompose
it as a sum of reverse martingale differences. We consider $K$ as a
function defined on the space $\tilde X=X^{\N}$, endowed with the
probability measure $\tilde\mu=\mu\otimes
\delta_{Tx}\otimes\delta_{T^2x}\otimes \dotsm$. Let $\boF_k$ be the
$\sigma$-algebra generated by indices starting with $k$ (i.e., a
function $f(x_0,x_1,\dotsc)$ on $\tilde X$ is $\boF_k$-measurable if
it does not depend on $x_0,\dotsc, x_{k-1}$). Let
  \begin{equation}
  \label{eq:def_Kk}
  K_k(x_k,\dotsc) = \E(K \given \boF_k)(x_k,\dotsc)
  = \sum_{T^k x=x_k} g^{(k)}(x) K(x,\dotsc, T^{k-1}x, x_k,\dotsc).
  \end{equation}
This function plays the role of the function $F_k$ (defined
after~\eqref{eq:Ak}) for Birkhoff sums, and is the main object to
understand.

As in the proof of Lemma~\ref{lem_Fk}, we want to express
$K_k(x_k,\dotsc)$, for $x_k\in \Delta_0$, using the transfer operator
restricted to the basis, i.e., $T_n$. Define for $i\leq k$ a function
$w_i$ on the basis by
  \begin{align*}
  w_i(x) = \sum_{T^i y = x} g^{(i)}(y) \Bigl[&
    K(y, Ty,\dotsc, T^{j(y)-1}y, T^{j(y)}y,\dotsc,
       T^{i-1}y, x, \underbrace{x_*,\dotsc, x_*}_{k-i-1\text{ terms}}, x_k, \dotsc)
    \\
    -& K(y, Ty, \dotsc, T^{j(y)-1}y, T^{j(y)}y, \underbrace{x_*,\dotsc, x_*}_{\mathclap{k-j(y)-1\text{ terms}}}, x_k,\dotsc)\Bigr],
  \end{align*}
where for each $y$ we define $j(y)$ as the last time in $[0, i-1]$
for which $T^{j(y)}(y) \in \Delta_0$. If there is no such time, then
$j(y)=-1$. The idea is that, for each preimage $y$ of $x$ under
$T^i$, we replace its last excursion outside of $\Delta_0$ by the
trivial sequence $x_*,\dotsc, x_*$.

A simple telescoping argument then gives:
  \begin{equation*}
  K_k(x_k,\dotsc) = \sum_{i=0}^k \sum_{x\in \Delta_0, T^{k-i}x=x_k} g^{(k-i)}(x) w_i(x) + K(x_*,\dotsc, x_*,x_k,\dotsc).
  \end{equation*}
Indeed, in the expression~\eqref{eq:def_Kk}, if one starts replacing
successively each excursion outside of $\Delta_0$, one ends up adding
sums of the functions $w_i(x)$, and the remaining term (where all
excursions have been replaced) is $\sum_{T^k x =x_k} g^{(k)}(x)
K(x_*,\dotsc, x_*, x_k,\dotsc)$, which reduces to $K(x_*,\dotsc, x_*,
x_k,\dotsc)$ since $\sum g^{(k)}(x)=1$ as the measure is invariant.

The above expression also reads
  \begin{equation}
  \label{eq:decompose_Kk}
  K_k(x_k,\dotsc) = \sum_{i=0}^k T_{k-i}w_i (x_k) + K(x_*,\dotsc, x_*, x_k, \dotsc).
  \end{equation}
We will be able to use it since we know a lot about the operators
$T_n$ (their properties, expressed in
Proposition~\ref{prop:conseq_banach}, were already at the heart of
the proof of Lemma~\ref{lem_Fk}), but we first need to understand
$w_i$ more properly.

\begin{lem}
We have $\norm{w_i}\leq \sum_{a+b=i} \Lip_a(K) c_{b}^{(q)}$.
\end{lem}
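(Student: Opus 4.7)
The plan is to establish both the $L^\infty$ bound and the Lipschitz seminorm bound of the claimed form. The central tool is a coordinate-by-coordinate telescoping of the bracket inside the definition of $w_i$, combined with the estimate $\norm{B_b}\le C\mu(\phi>b)$ from~\eqref{eq:controle_Bb}.

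For the uniform bound, fix $x\in\Delta_0$. For each preimage $y$ of $x$ under $T^i$, I telescope between the truncated and full sequences by changing positions $c=j(y)+1,\dotsc,i$ one at a time, from $x_*$ to $T^c y$ (or to $x$ when $c=i$). By the separately Lipschitz property of $K$ and $d\le1$, each such step contributes at most $\Lip_c(K)$. Swapping the sums over $y$ and $c$,
\begin{equation*}
|w_i(x)|\le\sum_{c=0}^{i}\Lip_c(K)\sum_{T^iy=x,\,j(y)<c}g^{(i)}(y).
\end{equation*}
The inner sum ranges over preimages whose orbit avoids $\Delta_0$ in $[c,i-1]$. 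After substituting $z=T^cy$ and using $\sum_{T^cy=z}g^{(c)}(y)=1$, it reduces to $B_{i-c}1(x)\le c_{i-c}^{(q)}$ by~\eqref{eq:controle_Bb}. Setting $a=c$, $b=i-c$ yields the claim.

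For the Lipschitz seminorm, I fix $x,x'\in\Delta_{\alpha,0}$ and pair preimages $y\leftrightarrow y'$ of $x,x'$ in corresponding length-$i$ cylinders, so $j(y)=j(y')$. Denoting by $\mathrm{brac}(y,x)$ the bracket in the definition of $w_i$, I split
\begin{equation*}
w_i(x)-w_i(x')=\sum[g^{(i)}(y)-g^{(i)}(y')]\,\mathrm{brac}(y,x)+\sum g^{(i)}(y')[\mathrm{brac}(y,x)-\mathrm{brac}(y',x')].
\end{equation*}
The first sum is handled by bounded distortion, $|g^{(i)}(y)-g^{(i)}(y')|\le Cg^{(i)}(y')d(x,x')$, combined with the uniform estimate above, which already has the desired form up to a factor $d(x,x')$. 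For the second sum I apply the same telescoping to both brackets. The key tower-geometry observation is that paired orbits satisfy $d(T^cy,T^cy')\le\rho d(x,x')$ for every $c\in[j(y),i-1]$, since the only return of the orbit in $[j(y)+1,i]$ happens at time $i$. This extracts the factor $d(x,x')$ from each telescoping step, and the remaining weighted sum over preimages is once again a $B_{i-c}$-type sum that produces $c_{i-c}^{(q)}$.

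The main obstacle is controlling the bracket-difference part. A direct triangle-inequality expansion of $|\mathrm{brac}(y,x)-\mathrm{brac}(y',x')|$ produces unwanted cross-terms $\Lip_j(K)d(T^jy,T^jy')$ for $j<c$ that carry no decay in $i-c$. The remedy is a careful re-summation: the tower-distance bound gives $d(T^jy,T^jy')\le d(x,x')\rho^{N}$ for $N$ equal to the number of returns in $[j+1,i]$, and the resulting double sum over $(c,j)$ is controlled using the convolution inequality $(c^{(q)}\star c^{(q)})_n\le c_n^{(q)}$ from~\eqref{seq:convole}, collapsing back to an expression of the expected form $\sum_{a+b=i}\Lip_a(K)c_b^{(q)}$.
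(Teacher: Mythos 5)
Your estimate for $\norm{w_i}_\infty$ is correct and matches the paper's: telescope the bracket over the coordinates $c\in[j(y)+1,i]$, swap sums, and recognize the inner sum as a $B_{i-c}$-type quantity. That part is fine.

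The Lipschitz seminorm part, however, has a genuine gap. The paper does \emph{not} re-telescope the bracket when comparing the two preimages: it directly bounds $\abs{H(y)-H(y')}$ by the separately-Lipschitz property, giving the single sum $\sum_{\ell}\Lip_\ell(K)\,d(T^\ell y,T^\ell y')$, then writes $d(T^\ell y,T^\ell y')=\Psi_{i-\ell}(T^\ell y)\,d(x,x')$ and invokes the key input $\int_{T^{-n}\Delta_0}\Psi_n\,\dd\mu\le c_n^{(q)}$ (Lemma~4.4 of~\cite{gouezel_chazottes_concentration}). You instead run the bracket-telescoping again \emph{and then} compare $y$ to $y'$; this double telescoping is what creates the cross-terms you flag. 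The proposed remedy (``careful re-summation'' via~\eqref{seq:convole}) does not work as stated: the cross-term $\Lip_j(K)d(T^jy,T^jy')$ at step $c$ does not depend on $c$, so summing it over the telescoping index $c\in[j(y),i-1]$ produces an unwanted multiplicity $\approx i-\max(j,j(y))$, and this linear-in-$i$ overcount cannot be absorbed by a $c^{(q)}\star c^{(q)}$ collapse. Taking the complementary bound $\min(\cdot,\Lip_{c+1}(K))$ loses the factor $d(x,x')$ entirely. Furthermore, even if the re-summation worked, you still need the $\Psi_n$ integral estimate to control the distances at coordinates $j<j(y)$ (where the orbit has many returns to the base); your argument nowhere supplies this, whereas the paper cites it explicitly. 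The fix is simply to drop the second telescoping and bound $H(y)-H(y')$ directly as a separately-Lipschitz function of the orbit coordinates.
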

\begin{proof}
First, we control the supremum of $w_i$. Write $w_i(x)=\sum_y
g^{(i)}(y) H(y)$, then $\abs{H(y)} \leq \sum_{j(y)+1}^{i-1}
\Lip_\ell(K)$. The sum of $g^{(i)}(y)$ over those points with
$j(y)<\ell$ is $\sum_{T^{i-\ell}z=x} g^{(i-\ell)}(z)$, where the sum
is restricted to those points $z$ that do not come back to the basis
before time $i-\ell$. By bounded distortion, this is comparable to
$\mu\{\phi>i-\ell\} \leq c_{i-\ell}^{(q)}$. We get
  \begin{equation}
  \label{eq:wi_sup}
  \norm{w_i}_\infty \leq \sum_\ell \Lip_\ell(K) c_{i-\ell}^{(q)}.
  \end{equation}

We estimate now the Lipschitz constant of $w_i$. Write for $x,x'\in
\Delta_0$
  \begin{align*}
  w_i(x)-w_i(x') & = \sum g^{(i)}(y) H(y) -g^{(i)}(y')H(y')
  \\&
  = \sum g^{(i)}(y) (H(y)-H(y')) + \sum (g^{(i)}(y)-g^{(i)}(y')) H(y'),
  \end{align*}
where we have paired together the preimages $y$ and $y'$ of $x$ and
$x'$ under $T^i$ that belong to the same cylinder of length $i$. For
the second sum, bounded distortion gives $\abs{g^{(i)}(y) -
g^{(i)}(y')} \leq C d(x,x') g^{(i)}(y') $, hence the Lipschitz norm
of this sum is at most $C \norm{w_i}_\infty$, which has already been
controlled in~\eqref{eq:wi_sup}. For the first sum, we have
  \begin{equation*}
  \abs{H(y)-H(y')} \leq 2\sum_{\ell=0}^{i-1} \Lip_\ell(K) d(T^\ell y, T^\ell y')
  \leq 2 \sum_{\ell=0}^{i-1} \Lip_\ell(K)\Psi_{i-\ell}(T^\ell y)d(x,x'),
  \end{equation*}
where $\Psi_{a}(z) = \rho^{\Card\{0\leq t< a, T^t z\in \Delta_0\}}$:
this function measures the expansion of the map $T^a$ applied to $z$,
since each return to the basis gives an expansion factor of
$\rho^{-1}>1$ by definition of the distance. Using bounded
distortion, we get
  \begin{equation*}
  \abs*{\sum g^{(i)}(y) (H(y)-H(y'))}  \leq C \sum_{\ell=0}^{i-1} \Lip_\ell(K) d(x,x')
    \int_{T^{-(i-\ell)}\Delta_0} \Psi_{i-\ell}\dd\mu.
  \end{equation*}
By~\cite[Lemma 4.4]{gouezel_chazottes_concentration}, the sequence
$\int_{T^{-n}\Delta_0} \Psi_{n}\dd\mu$ is $\leq c_n^{(q)}$. The
desired bound for the Lipschitz constant of $w_i$ follows.
\end{proof}

Then, we turn to the analogue of Lemma~\ref{lem_Fk}.
\begin{lem}
\label{lem:Kk}
If $x_k$ is at height $h$ and $x_{k+1}\in \Delta_0$, then
  \begin{multline}
  \label{eq_avec_decroissance}
  \abs{K_k(x_k, x_{k+1}, x_{k+2},\dotsc)-K_{k+1}(x_{k+1}, x_{k+2},\dotsc)}
  \\
  \leq \sum_{a=0}^{k-h} \Lip_a(K)\min\left( (h+1)c_{k-h-a}^{(q)}, c_{k-h-a}^{(q-1)}\right)
  + \sum_{a=k-h+1}^k \Lip_a(K).
  \end{multline}
\end{lem}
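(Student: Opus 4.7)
The strategy extends Lemma~\ref{lem_Fk} from Birkhoff sums to general separately Lipschitz functions, using the renewal decomposition~\eqref{eq:decompose_Kk}. The second term $\sum_{a=k-h+1}^{k}\Lip_a(K)$ in the target bound is the ``climb'' contribution: when $x_k$ is at height $h$, the injectivity of $T$ off $\Delta_0$ forces every preimage $z$ of $x_k$ under $T^k$ to satisfy $T^{k-h}z=\pi x_k$ and $T^j z = T^{j-k+h}\pi x_k$ for $k-h\le j\le k-1$; these $h$ positions are deterministic inside $K_k$, whereas inside $K_{k+1}$ the orbit at these positions varies over all preimages of $x_{k+1}$, so nothing better than the trivial Lipschitz bound is possible there. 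The first sum is the contribution of the genuinely averaged positions $a\le k-h$. We may assume $h<k$, otherwise the crude bound $\sum_{a=0}^k \Lip_a(K)\cdot\mathrm{diam}(X)$ already suffices.

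For the reduction, define
\[
\widetilde K(y_0,y_1,\ldots) = K(y_0,\ldots,y_{k-h-1},\pi x_k,T\pi x_k,\ldots,T^{h-1}\pi x_k,x_k,y_{k+1},\ldots),
\]
which freezes positions $k-h$ through $k$ to the forced climb. Since the preimage orbits in $K_k$ match this climb exactly, I get the identity $K_k(x_k,\ldots)=\widetilde K_{k-h}(\pi x_k,\ldots)$, and $\Lip_a(\widetilde K)=\Lip_a(K)$ for $a<k-h$ or $a\ge k+1$ while $\Lip_a(\widetilde K)=0$ for $k-h\le a\le k$. Applying~\eqref{eq:decompose_Kk} to $\widetilde K$ at conditioning time $k-h$, and separately to $K$ at conditioning time $k+1$, yields
\[
K_k(x_k,\ldots)=\sum_{i=0}^{k-h}T_{k-h-i}\widetilde w_i(\pi x_k)+C_1,\quad K_{k+1}(x_{k+1},\ldots)=\sum_{i=0}^{k+1}T_{k+1-i}w_i(x_{k+1})+C_2,
\]
where the previous lemma gives $\norm{w_i},\norm{\widetilde w_i}\le \sum_{a+b=i}\Lip_a(K)c_b^{(q)}$. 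The constant-term discrepancy $C_1-C_2$ differs only in the arguments at positions $k-h,\ldots,k$ and is bounded by $\sum_{a=k-h}^k\Lip_a(K)$, which is absorbed into the claim.

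To estimate the difference of the two renewal sums I use Proposition~\ref{prop:conseq_banach}: write $T_\ell=\Pi T_\ell\Pi+(T_\ell-\Pi T_\ell\Pi)$ with $\norm{T_\ell-\Pi T_\ell\Pi}=O(\ell^{-q})$. The projected parts are constants on $\Delta_0$ with bounded coefficient $t_\ell$, and comparing the two such projected sums over the offset ranges $[0,k-h]$ versus $[0,k+1]$ the bulk cancels, leaving $O(h+1)$ mismatched indices each carrying a $\widetilde w_i$ (or $w_i$) of size $\sum_a\Lip_a(K)c_{i-a}^{(q)}$; after rearrangement this gives a contribution of $\sum_{a\le k-h}\Lip_a(K)\cdot(h+1)c_{k-h-a}^{(q)}$, mirroring the $(h+1)\sum_b |u_b(f)|$ estimate at the end of the proof of Lemma~\ref{lem_Fk}. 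Alternatively, skipping cancellation and directly bounding $\norm{T_\ell}=O(1)$ times $\norm{\widetilde w_i}$ on the ``extra'' tail via Lemma~\ref{lem:karamata} (using $\sum_{b>n}b^0 c_b^{(q)}=O(n^{-(q-1)})$ convolved with the $c^{(q)}$ coming from $\widetilde w_i$) yields $\sum_{a\le k-h}\Lip_a(K)c_{k-h-a}^{(q-1)}$ without the $(h+1)$ factor. Taking the minimum per position $a$ gives the first term of the claim.

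The main technical obstacle is that $w_i$ and $\widetilde w_i$ are defined using different ``futures'' (conditioning times $k+1$ versus $k-h$), so the projected sums $t_\ell\overline{w_{k+1-\ell}}$ and $t_\ell\overline{\widetilde w_{k-h-\ell}}$ do not align term by term, and a clean cancellation requires showing that the mismatches in both range and integrand contribute no worse than the stated bound. A convenient device is to perform a single telescoping that directly transforms $K_k$ into $K_{k+1}$ by a sequence of elementary substitutions, each costing one Lipschitz constant weighted by the appropriate tail factor from Proposition~\ref{prop:conseq_banach}, rather than subtracting two independent decompositions; verifying that the accumulated cost stays within the claimed min-bound is where the bookkeeping becomes delicate.
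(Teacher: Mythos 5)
Your identity $K_k(x_k,\dots)=\widetilde K_{k-h}(\pi x_k,\dots)$ is correct, but the way you build $\widetilde K$ creates a gap that you then notice and do not close. You freeze positions $k-h,\dots,k$ to the actual climb of $\pi x_k$, so your $\widetilde K$ \emph{depends on $x_k$}. As a result the excursion functions $\widetilde w_i$ used to expand $K_k$ are genuinely different from the functions $w_i$ used to expand $K_{k+1}$, and the projected main terms $\Pi T_\ell\Pi\widetilde w_i=t_\ell\widetilde u_i$ and $\Pi T_\ell\Pi w_i=t_\ell u_i$ have $\widetilde u_i\neq u_i$. The telescoping argument that produces the factor $(h+1)c^{(q)}_{k-h-a}$ (and also its replacement $c^{(q-1)}_{k-h-a}$, which in the paper's proof comes from bounding $\sum_{j=k-h-i}^{k-i}|t_{j+1}-t_j|$ rather than by $h+1$ copies of a single term) relies on the two renewal sums sharing the \emph{same} coefficients $u_i$, so that the difference reduces to increments $t_{j+1}-t_j$ controlled by Proposition~\ref{prop:conseq_banach}. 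Your ``alternative'' of skipping cancellation and bounding $\norm{T_\ell}=O(1)$ against $\norm{\widetilde w_i}$ does not produce the decaying prefactor $c^{(q-1)}_{k-h-a}$ either: without cancellation one only gets $\sum_{i\le k-h}\norm{T_{k-h-i}}\norm{\widetilde w_i}=O(\sum_a\Lip_a(K))$, which has no decay in $k-h-a$ at all. So both branches of the minimum in~\eqref{eq_avec_decroissance} require a single set of $u_i$ on both sides, and your construction does not provide one. You flag exactly this obstacle in your last paragraph, but flagging it is not resolving it.

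The paper's resolution is simple and worth internalizing: instead of freezing the variables in $(k-h,k]$ to the climb of $\pi x_k$, freeze them all to the \emph{fixed} reference point $x_*$. This costs the additive error $\sum_{a=k-h+1}^k\Lip_a(K)$ once, applied to both $K_k$ and $K_{k+1}$, and produces a single function $\tilde K$ (with $\Lip_a(\tilde K)=0$ for $a>k-h$) that is independent of $x_k$. Since $\tilde K$ does not depend on any variable past index $k-h$, the excursion functions $w_i$ in~\eqref{eq:decompose_Kk} are the same whether one conditions at time $k-h$ or at time $k+1$; the two renewal sums $\sum_{i\le k-h}t_{k-h-i}u_i$ and $\sum_{i\le k+1}t_{k+1-i}u_i$ then differ only through $t$-increments and an index tail, and the estimate $|t_{j+1}-t_j|\le c_j^{(q)}$ yields~\eqref{eq_sans_decroissance}, from which both branches of the minimum follow.

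As a minor side point, with your $\widetilde K$ the constant-term discrepancy you compute also involves the index $a=k-h$ (since $\widetilde K$ freezes position $k-h$ as well), so the boundary contribution would be $\sum_{a=k-h}^k\Lip_a(K)$ rather than $\sum_{a=k-h+1}^k\Lip_a(K)$; this particular slippage is harmless, as the extra term can be absorbed into the $a=k-h$ summand of the first sum, but it is another indication that the two decompositions you are subtracting are not aligned.
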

When $h>k$, the first sum vanishes, and the second one reduces to
$\sum_{a=0}^k \Lip_a(K)$ since $\Lip_a(K)=0$ for $a<0$.

If all the $\Lip_a(K)$ are of order $1$ (which is the case for
instance with Birkhoff sums), it is easy to check that the expression
in the lemma reduces to $O(1+h\wedge k)$ as in Lemma~\ref{lem_Fk}.

\begin{proof}
The case $h>k$ is easy (just substitute each variable in the
expression of $K_k(x_k,\dotsc)$ with the corresponding variable in
$K_{k+1}(x_{k+1},\dotsc)$), let us deal with the more interesting
case $h\leq k$.

We first prove the inequality
  \begin{multline}
  \label{eq_sans_decroissance}
  \abs{K_k(x_k, x_{k+1}, x_{k+2},\dotsc)-K_{k+1}(x_{k+1}, x_{k+2},\dotsc)}
  \\
  \leq \sum_{a=0}^{k-h} \Lip_a(K)\left[
    \sum_{b=0}^{k-h-a} c_b^{(q)} \left(
      \sum_{j=k-h-a-b}^{k-a-b} c_j^{(q)}\right)
    +\sum_{b=k-h-a+1}^{k-a+1} c_b^{(q)}\right]
  + \sum_{a=k-h+1}^k \Lip_a(K).
  \end{multline}

We replace successively all the variables with index in $(k-h,k]$ in
the expressions of $K_k(x_k,\dotsc)$ and $K_{k+1}(x_{k+1},\dotsc)$
with $x_*$, introducing an error at most $\sum_{a=k-h+1}^k \Lip_a(K)$
that corresponds to the last term in~\eqref{eq_sans_decroissance}.
Letting
  \begin{equation*}
  \tilde K(x_0,\dotsc, x_{k-h}) = K(x_0,\dotsc, x_{k-h},x_*,\dotsc, x_*, x_{k+1},\dotsc)-K(x_*,\dotsc, x_*, x_{k+1},\dotsc),
  \end{equation*}
we may then work with $\tilde K$ instead of $K$. It satisfies
$\Lip_a(\tilde K)\leq \Lip_a(K)$ for $a\leq k-h$, and $\Lip_a(\tilde
K)=0$ for $a>k-h$. Let $w_i$ be the corresponding functions for
$\tilde K$, and let $x=\pi x_k$ be the projection of $x_k$ in the
basis of the tower. We get from~\eqref{eq:decompose_Kk}
  \begin{equation*}
  \tilde K_k(x_k) = \tilde K_{k-h}(x) = \sum_{i=0}^{k-h}T_{k-h-i} w_i(x),\quad
  \tilde K_{k+1}(x_{k+1}) = \sum_{i=0}^{k+1} T_{k+1-i} w_i(x_{k+1}).
  \end{equation*}
We write $T_\ell = \Pi T_\ell \Pi + E_\ell$, where $\Pi$ is the
projection on constant functions, and $\norm{E_\ell} \leq
c_{\ell}^{(q)}$ by Proposition~\ref{prop:conseq_banach}. We have
  \begin{align*}
  \abs*{\sum_{i=0}^{k-h}E_{k-h-i} w_i(x)} &
  \leq \sum_{\ell+i=k-h} \norm{E_\ell} \norm{w_i}
  \leq \sum_{\ell+ i=k-h} c_\ell^{(q)} \sum_{a+b=i} \Lip_a(\tilde K) c_b^{(q)}
\\&
  =\sum_{a+j=k-h} \Lip_a(\tilde K) (c^{(q)}\star c^{(q)})_j.
  \end{align*}
By~\eqref{seq:convole}, this is bounded by $\sum_{a+j=k-h} \Lip_a(K)
c_j^{(q)}$, which is bounded by~\eqref{eq_sans_decroissance} (to see
this, in~\eqref{eq_sans_decroissance}, take $b=0$ in the first sum
over $b$, and then $j=k-h-a$ in the next sum). In the same way, we
have
  \begin{equation*}
  \abs*{\sum_{i=0}^{k+1}E_{k+1-i} w_i(x_{k+1})}
  \leq \sum_{a+j=k+1} \Lip_a(\tilde K) c_j^{(q)},
  \end{equation*}
which is again bounded by~\eqref{eq_sans_decroissance} (up to a shift
of one in the indices, take $b=0$ in the first sum
of~\eqref{eq_sans_decroissance} and $j=k-a$ in the second sum).

We turn to the main terms, coming from $\Pi T_\ell \Pi$. We have $\Pi
T_\ell \Pi w_i = t_\ell u_i$, for some scalar sequences $t_\ell$ and
$u_i$. Moreover, $u_i$ is bounded by $\norm{w_i}$, and
$\abs{t_\ell-t_{\ell+1}}\leq c_\ell^{(q)}$ by
Proposition~\ref{prop:conseq_banach}. The resulting term is
  \begin{equation*}
  \abs*{\sum_{i=0}^{k-h} t_{k-h-i}u_i - \sum_{i=0}^{k+1} t_{k+1-i} u_i}
  \leq \sum_{i=0}^{k-h} \abs{u_i} \sum_{j=k-h-i}^{k-i} \abs{t_{j+1}-t_j}
  + \sum_{i=k-h+1}^{k+1} \abs{u_i}.
  \end{equation*}
Bounding $\abs{u_i}$ by $\sum_{a+b=i}\Lip_a(\tilde K) c_b^{(q)}$ and
$\abs{t_{j+1}-t_j}$ with $c_j^{(q)}$, we readily check that all those
terms are bounded by~\eqref{eq_sans_decroissance}.

This concludes the proof of~\eqref{eq_sans_decroissance}. To
conclude, we should show that the coefficient of $\Lip_a(K)$ in this
expression is bounded by $\min\left((h+1)c_{k-h-a}^{(q)},
c_{k-h-a}^{(q-1)}\right)$. We have $\sum_{i=\ell}^\infty c_i^{(q)}
\leq c_\ell^{(q-1)}$. In particular,
  \begin{align*}
  \sum_{b=0}^{k-h-a}c_b^{(q)}\left(\sum_{j=k-h-a-b}^{k-a-b} c_j^{(q)}\right)
  &
  \leq \sum_{b=0}^{k-h-a} c_b^{(q)} c_{k-h-a-b}^{(q-1)}
  = (c^{(q)}\star c^{(q-1)})_{k-h-a}
  \\&
  \leq (c^{(q-1)}\star c^{(q-1)})_{k-h-a}
  \leq c^{(q-1)}_{k-h-a},
  \end{align*}
as the sequences that are $O(1/n^{q-1})$ are stable under
convolution. This proves the upper bound $c^{(q-1)}_{k-h-a}$. For the
other one, note that $\sum_{j=k-h-a-b}^{k-a-b}c_j^{(q)} \leq
(h+1)c_j^{(q)}$. From this point on, one can continue the computation
as above, getting in the end the bound $(h+1)c^{(q)}_{k-h-a}$.
\end{proof}

\begin{rmk}
The article~\cite{gouezel_chazottes_concentration} already proved
concentration estimates in Young towers, but only for $q>2$. In this
case, the estimates were not as good as those in
Theorem~\ref{main_thm:concentration}. Moreover, all the estimates
started diverging when $q\leq 2$. There are three main differences in
the current approach that make it possible to improve
upon~\cite{gouezel_chazottes_concentration}:
\begin{itemize}
\item The decomposition~\eqref{eq:decompose_Kk} of $K_k$, where
    one replaces one excursion at a time in the definition of
    $w_i$, is more efficient than the corresponding decomposition
    of~\cite{gouezel_chazottes_concentration} where one only
    replaces one variable at a time (this creates some useless
    redundancy in the estimates, which is not a problem when
    $q>2$ but causes divergence of the estimates when $q\leq 2$).
\item The main difference between the current paper
    and~\cite{gouezel_chazottes_concentration} is that, in
    Lemma~\ref{lem:Kk}, we compare directly $K_k$ to $K_{k+1}$.
    On the contrary, in~\cite{gouezel_chazottes_concentration},
    $K_k$ and $K_{k+1}$ are compared to explicit integral
    quantities (see for instance Lemma~2.3 there). This is more
    intuitive and natural, since it expresses the mixing
    properties of the system. However, when $q\leq 2$, the
    convergence towards these integrals is rather slow, making
    again the estimates diverge. In the proof of
    Lemma~\ref{lem:Kk}, we do \emph{not} claim that $K_k$ is
    close to any explicit or meaningful quantity, only that it is
    close to $\sum t_{k-i} u_i$. This is sufficient to prove that
    $K_k$ is close to $K_{k+1}$ since $t_n$ is close to $t_{n+1}$
    by Proposition~\ref{prop:conseq_banach}. Both are also close
    to $\lim t_i$ if $n$ is large enough, and this is essentially
    what is used in~\cite{gouezel_chazottes_concentration}, but
    this gives a worse estimate.
\item In the case $q>2$, the main new ingredient compared to the
    techniques of~\cite{gouezel_chazottes_concentration} is
    Lemma~\ref{lem:ineqq>2} below.
\end{itemize}
\end{rmk}

We can now deduce concentration bounds in the different situations we
considered for moment bounds.

\subsection{The case \texorpdfstring{$q>2$}{q>2}}

In this paragraph, we prove Theorem~\ref{main_thm:concentration} in
the case $q>2$. As we explained after the statement of this theorem,
it suffices to prove the result for $p=2q-2$.

In this situation, we use~\eqref{eq_avec_decroissance} in the
form
  \begin{equation}
  \label{lem:Kk_q>2}
  \abs{K_k(x_k, x_{k+1},\dotsc)-K_{k+1}(x_{k+1},\dotsc)}
  \leq \sum_{a=0}^{k-h}\Lip_a(K) c_{k-h-a}^{(q-1)}
  + \sum_{a=k-h+1}^k \Lip_a(K),
  \end{equation}
i.e., we always use the same term $c_{k-h-a}^{(q-1)}$ in the minimum
in~\eqref{eq_avec_decroissance}.

Let us start the proof of the theorem. The quantity $K-\E(K)$ can be
decomposed as $\sum_{k\geq 0} (K_k-K_{k+1})$. Since this is a sum of
reverse martingale differences, we may use Burkholder-Rosenthal
inequality in the form of~\eqref{eq:burkh_useful}, to obtain a bound
  \begin{multline*}
  \int \abs{K(x,Tx,\dotsc)-\E(K)}^{2q-2}\dd\mu(x)
  \\
  \leq C \left(\sum \norm{\E(D_k^2 \given \boF_{k+1})}_{\infty}\right)^{q-1}
  + C \sum \norm{\E(\abs{D_k}^{2q-2} \given \boF_{k+1})}_{\infty},
  \end{multline*}
where $D_k=K_k-K_{k+1}$. Hence, for $r \in \{2, 2q-2\}$, we should
estimate $\norm{\E(\abs{D_k}^r \given \boF_{k+1})}_{\infty}$. If
$x_{k+1}$ is not in the basis of the tower, then $\E(\abs{D_k}^r
\given \boF_{k+1})(x_{k+1},\dotsc)=0$ and there is nothing to do.
Assume now that $x_{k+1}$ is in the basis. Let $z_\alpha$ denote its
preimages, with respective heights $h_\alpha$. We have
  \begin{equation*}
  \E(\abs{D_k}^r\given \boF_{k+1})(x_{k+1},\dotsc) =
  \sum g(z_\alpha) \abs{K_k(z_\alpha, x_{k+1},\dotsc)-K_{k+1}(x_{k+1},\dotsc)}^r.
  \end{equation*}
With~\eqref{lem:Kk_q>2}, we get
  \begin{equation*}
  \norm{\E(\abs{D_k}^r \given \boF_{k+1})}_\infty
  \leq
  \sum_{h\geq 0} \mu(\phi=h+1) \left( \sum_{a=0}^{k-h} \Lip_a(K) c_{k-h-a}^{(q-1)} + \sum_{a=k-h+1}^k \Lip_a(K)\right)^r
  \end{equation*}
Using the inequality $(X+Y)^r \leq C(X^r+Y^r)$ to separate the two
sums, we get two different terms. We should then sum over $k$, and
get a bound in terms of $\sum \Lip_a(K)^2$.

First, we deal with the first sum $\sum_{a=0}^{k-h} \Lip_a(K)
c_{k-h-a}^{(q-1)}$. Since the sequence $c_n^{(q-1)}$ is summable, we
have by~\eqref{eq:stable_power}
  \begin{equation*}
  \left(\sum_{a=0}^{k-h} \Lip_a(K) c_{k-h-a}^{(q-1)}\right)^r
  \leq C \sum_{a=0}^{k-h} \Lip_a(K)^r c_{k-h-a}^{(q-1)}.
  \end{equation*}
Summing over $k$, we get a term
  \begin{equation*}
  \sum_k \sum_{h=0}^k \sum_{a=0}^{k-h} \mu(\phi=h+1) \Lip_a(K)^r c_{k-h-a}^{(q-1)}.
  \end{equation*}
Writing $k=\ell+h$, this becomes
  \begin{equation*}
  \sum_{h\geq 0} \sum_{\ell\geq 0} \sum_{a=0}^{\ell} \mu(\phi=h+1) \Lip_a(K)^r c_{\ell-a}^{(q-1)}.
  \end{equation*}
The sum over $h$ factorizes out. Then, for each $a$, the sum over
$\ell$ gives a finite contribution since $c^{(q-1)}$ is summable. We
are left with $\sum_a \Lip_a(K)^r$, which is bounded by $\left(\sum_a
\Lip_a(K)^2\right)^{r/2}$ as desired.

Then, we deal with the second sum $\sum_{a=k-h+1}^k \Lip_a(K)$.
Summing over $k$, the corresponding term is
  \begin{equation*}
  \sum_k \sum_h \mu(\phi=h+1) \left(\sum_{a=k-h+1}^k \Lip_a(K)\right)^r.
  \end{equation*}
We need to treat separately the cases $r=2$ and $r=2q-2$. For $r=2$,
we simply use Cauchy-Schwarz inequality:
  \begin{align*}
  \sum_k \sum_h \mu(\phi=h+1) \left(\sum_{a=k-h+1}^k \Lip_a(K)\right)^2
  &
  \leq \sum_k \sum_h \mu(\phi=h+1) h \sum_{a=k-h+1}^k \Lip_a(K)^2
  \\&
  = \sum_a \Lip_a(K)^2 \sum_h\mu(\phi=h+1) h\sum_{k=a}^{a+h-1}1.
  \end{align*}
We can factorize out $\sum_h \mu(\phi=h+1)h^2$, which is finite since
$q>2$, by Lemma~\ref{lem:karamata}. We are left with $C\sum_a
\Lip_a(K)^2$ as desired.

For the case $r=2q-2$, we should prove an inequality
  \begin{equation*}
  \sum_k \sum_h \mu(\phi=h+1) \left(\sum_{a=k-h+1}^k \Lip_a(K)\right)^{2q-2}
  \leq C \left(\sum_a \Lip_a(K)^2\right)^{q-1}.
  \end{equation*}
It turns out that this inequality is more difficult than the previous
ones. It is given in Lemma~\ref{lem:ineqq>2} below. With this lemma,
the proof is complete.
\qed

\begin{lem}
\label{lem:ineqq>2}
Let $q>2$. Consider a sequence $a_n\geq 0$ with $\sum_{n\geq N}a_n =
O(N^{-q})$. There exists a constant $C$ such that, for any sequence
$(u_n)\in \ell^2(\Z)$,
  \begin{equation*}
  \sum_{n\in \Z} \sum_{h\geq 0} a_h \left|\sum_{i=n-h}^{n+h} u_i\right|^{2q-2}
  \leq C \left(\sum_{n\in \Z} u_n^2\right)^{q-1}.
  \end{equation*}
\end{lem}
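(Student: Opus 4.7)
The plan is to bound the inner sum over $h$ pointwise in $n$ by a constant multiple of $\norm{u}_{\ell^2}^{2q-4}Mu(n)^2$, and then sum in $n$ using the $\ell^2$-boundedness of the Hardy-Littlewood maximal operator (Theorem~\ref{thm:maximal}). Set $S_h(n)=\sum_{i=n-h}^{n+h}u_i$. Two complementary pointwise estimates are available. The first, obtained from the triangle inequality and the definition of $M$, reads $\abs{S_h(n)}\leq(2h+1)Mu(n)$, which is sharp for small $h$. The second, from Cauchy-Schwarz, is $\abs{S_h(n)}^2\leq(2h+1)\sum_{|i-n|\leq h}u_i^2\leq(2h+1)\norm{u}_{\ell^2}^2$, which is sharp for large $h$.

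The heart of the argument is to split the $h$-sum at the crossover scale $H(n)=\norm{u}_{\ell^2}^2/Mu(n)^2$, which satisfies $H(n)\geq 1$ since $Mu(n)\leq\norm{u}_{\ell^\infty}\leq\norm{u}_{\ell^2}$. On $\{h\leq H(n)\}$ the first bound gives
\[
\sum_{h\leq H(n)}a_h\abs{S_h(n)}^{2q-2}\leq Mu(n)^{2q-2}\sum_{h\leq H(n)}a_h(2h+1)^{2q-2}.
\]
Because $2q-2>q$ (this is exactly where the hypothesis $q>2$ enters), Lemma~\ref{lem:karamata} yields $\sum_{h\leq H(n)}a_h(2h+1)^{2q-2}\leq CH(n)^{q-2}$, and plugging in the value of $H(n)$ reduces this contribution to $C\norm{u}_{\ell^2}^{2q-4}Mu(n)^2$. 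On $\{h>H(n)\}$ the second bound gives
\[
\sum_{h>H(n)}a_h\abs{S_h(n)}^{2q-2}\leq\norm{u}_{\ell^2}^{2q-2}\sum_{h>H(n)}a_h(2h+1)^{q-1},
\]
and since $q-1<q$, Lemma~\ref{lem:karamata} bounds this $h$-sum by $CH(n)^{-1}$, again yielding $C\norm{u}_{\ell^2}^{2q-4}Mu(n)^2$.

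Adding the two halves and summing over $n\in\Z$ reduces the claim to $\sum_nMu(n)^2\leq C\sum_nu_n^2$, which is precisely Theorem~\ref{thm:maximal}. The only real obstacle is recognising the correct split: once $H(n)$ is identified as the scale at which the two crude bounds coincide (when raised to the power $2q-2$), the two halves are forced to contribute at the same order, because the exponents $2q-2$ and $q-1$ straddle the critical value $q$ in the precise manner required by Lemma~\ref{lem:karamata}.
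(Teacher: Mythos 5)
Your proof is correct. The two pointwise estimates you use --- $\abs{S_h(n)}\leq(2h+1)Mu(n)$ and $\abs{S_h(n)}\leq(2h+1)^{1/2}\norm{u}_{\ell^2}$ --- are exactly the two ingredients the paper uses, where they appear respectively as a weak-$L^2$ bound and an $L^\infty$ bound for the auxiliary function $v(n,h)=S_h(n)/(2h+1)^{1/2}$ on the measure space $\Z\times\N$ equipped with $\nu(n,h)=(h+1)^{q-1}a_h$; the paper then concludes by a layer-cake computation on the distribution function of $v$ (a hands-on Marcinkiewicz interpolation). You instead perform the interpolation pointwise in $n$, splitting the $h$-sum at the crossover scale $H(n)=\norm{u}_{\ell^2}^2/Mu(n)^2$ and invoking Lemma~\ref{lem:karamata} on each piece, which collapses both halves to $C\norm{u}_{\ell^2}^{2q-4}Mu(n)^2$ before the single application of Theorem~\ref{thm:maximal}. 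This is a genuinely more direct packaging: it avoids the auxiliary measure space, the weak-$L^p$ formalism, and formula~\eqref{eq:deduce_moments} entirely, and it makes transparent where $q>2$ enters (the exponents $2q-2$ and $q-1$ must straddle $q$ so that both Karamata sums converge at the right rate). It is worth noting that the paper remarks it could not find a ``direct proof''; your argument is about as direct as one can get while still, inevitably, relying on the maximal inequality. The only points to tidy are trivial: dispose of the degenerate case $Mu(n)=0$ (then $u\equiv0$ on the relevant window and the inner sum vanishes), and replace $H(n)$ by its integer part when applying Lemma~\ref{lem:karamata}.
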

Although the statement of the lemma is completely elementary, this
result is not trivial, even for $a_n=1/n^{q+1}$ (as is maybe
indicated by the fact that it fails for $q=2$). In particular, we
have not been able to find a direct proof: We need to resort to
maximal inequalities and interpolation.
\begin{proof}
We associate to a sequence $u_n$ the sequence
  \begin{equation*}
  v(n,h) = \frac{\sum_{i=n-h}^{n+h} u_i}{(2h+1)^{1/2}}.
  \end{equation*}
We consider $v$ as a function on the space $\Z\times \N$ endowed with
the measure $\nu(n,h) = (h+1)^{q-1} a_h$.

By Cauchy-Schwarz inequality, the function $v$ is bounded in
$L^\infty$ by $\norm{u}_{\ell^2}$. Let us now consider its weak
$L^2$-norm. Let $Mu(n) = \sup_h \left(\sum_{i=n-h}^{n+h}
\abs{u_i}\right)/(2h+1)$ be the maximal function associated to $u$.
It is bounded in $\ell^2$ by $C \norm{u}_{\ell^2}$, by
Theorem~\ref{thm:maximal}. Since $v(n,h) \leq (2h+1)^{1/2} Mu(n)$, we
have for all $s\geq 0$
  \begin{align*}
  \nu\{(n,h) \st \abs{v(n,h)}\geq s \}
  &
  \leq \nu\{(n,h) \st (2h+1)^{1/2} \geq s/Mu(n)\}
  \\&
  \leq \sum_n \sum_{h \geq ((s/Mu(n))^2 - 1)/2} (h+1)^{q-1} a_h.
  \end{align*}
By Lemma~\ref{lem:karamata}, we have $\sum_{h\geq t} (h+1)^{q-1}a_h
\leq C/(t+1)$. Hence,
  \begin{equation*}
   \nu\{(n,h) \st \abs{v(n,h)}\geq s \} \leq C \sum_n Mu(n)^2/s^2 \leq C s^{-2} \norm{Mu}_{\ell^2}^2
   \leq C s^{-2} \norm{u}_{\ell^2}^2.
  \end{equation*}

This shows that $v$ is bounded in $L^\infty$ and in weak $L^2$ by
$C\norm{u}_{\ell^2}$. One could deduce boundedness in any $L^p$ for
$2<p<\infty$ by using classical interpolation arguments, but it is
simpler to use the formula~\eqref{eq:deduce_moments}: we get
  \begin{equation}
  \label{eq:absvp}
  \int \abs{v}^p \dd\nu \leq C \int_{s=0}^{\norm{v}_\infty} s^{p-1} Cs^{-2} \norm{u}_{\ell^2}^2 \dd s
  \leq C \norm{u}_{\ell^2}^2 \norm{v}_\infty^{p-2}
  \leq C \norm{u}_{\ell^2}^p.
  \end{equation}

Taking $p=2q-2$, we get
  \begin{equation*}
  C \left(\sum u_n^2\right)^{q-1} \geq \sum_n \sum_h \nu(n,h) \abs{v(n,h)}^{2q-2}
  = \sum_n \sum_h (h+1)^{q-1} a_h \abs*{\frac{\sum_{i=n-h}^{n+h} u_i}{(2h+1)^{1/2}}}^{2q-2}.
  \end{equation*}
The powers of $h$ cancel on the right, and we are left with the
statement of the lemma.
\end{proof}

\subsection{The case \texorpdfstring{$q<2$}{q<2}}

In this case, it is sufficient to prove the weak moment
estimate~\eqref{eq:concent_q<2_weak}, since it implies all the other
ones thanks to~\eqref{eq:deduce_moments}. Let us for instance explain
how to get the most complicated moment estimate, for $p=q$. Write
$A=\sum \Lip_i(K)^q$, so that $\mu\{\abs{K-\E K}\geq s\}\leq
CAs^{-q}$, and $B = \sum \Lip_i(K) \geq \norm{K-\E K}_{L^\infty}$.
Then
  \begin{align}
  \notag
  \int \abs{K-\E K}^q \dd\mu
  &= q \int s^{q-1} \mu\{\abs{K-\E K}\geq s\} \dd s
  \leq q\int_{s=0}^B s^{q-1}\min(1, CAs^{-q})\dd s
  \\&
  \label{eq:deduce_strong_from_weak}
  \leq q\int_{s=0}^{A^{1/q}} s^{q-1}\dd s
  + q \int_{s=A^{1/q}}^B s^{q-1} CAs^{-q} \dd s
  = A + qCA(\log B-\log A^{1/q}).
  \end{align}
This is the desired moment estimate.

Let us now start the proof of~\eqref{eq:concent_q<2_weak}. Thanks to
Proposition~\ref{prop:weak_vBE}, the decomposition $K-\E K = \sum
D_k$ (with $D_k=K_k-K_{k+1}$) gives
  \begin{equation*}
  \norm{K-\E K}_{L^{q,w}}^q \leq C \sum \norm{D_k}_{L^{q,w}}^q.
  \end{equation*}
We have $D_k(x)=0$ if $Tx\not\in \Delta_0$, and otherwise
Lemma~\ref{lem:Kk} gives the bound
  \begin{equation*}
  \abs{D_k(x)} \leq \sum_{a=0}^{k-h} \Lip_a(K)\min\left( (h+1)c_{k-h-a}^{(q)}, c_{k-h-a}^{(q-1)}\right)
  + \sum_{a=k-h+1}^k \Lip_a(K),
  \end{equation*}
where $h=h(x)$. We should bound the weak $L^q$ norm of both terms on
the right to conclude. Let us denote them by $U_k(x)$ and $V_k(x)$.

We start with $V_k$. Fix some $s\geq 0$, let $h_0(k)$ be minimal such
that $\sum_{a=k-h_0+1}^k \Lip_a(K) \geq s$. Then
  \begin{equation*}
  \mu\left\{x \in T^{-1}\Delta_0 \st \sum_{a=k-h+1}^k \Lip_a(K) \geq s\right\}
  =\mu\{x\in T^{-1}\Delta_0 \st h(x)\geq h_0\}.
  \end{equation*}
This measure is exactly $\mu\{y\in \Delta_0 \st
\phi(y)>h_0\}=O(h_0^{-q})$. Hence,
  \begin{align*}
  \mu\left\{x \st \sum_{a=k-h+1}^k \Lip_a(K) \geq s\right\}
  &\leq C (1+h_0)^{-q}
  \leq C (1+h_0)^{-q} \left(\sum_{a=k-h_0+1}^k \Lip_a(K)/s\right)^q
  \\&
  \leq C s^{-q} M(k)^q,
  \end{align*}
where $M(k)$ is the maximal function associated to $\Lip_i(K)$, i.e.,
  \begin{equation}
  \label{def_Mk}
  M(k) = \sup_{h\geq 0}\frac{1}{2h+1} \sum_{i=k-h}^{k+h} \Lip_i(K).
  \end{equation}
We have proved that $\norm{V_k}_{L^{q,w}}^q \leq C M(k)^q$. Summing
over $k$, we obtain
  \begin{equation*}
  \sum \norm{V_k}_{L^{q,w}}^q \leq C \sum_k M(k)^q \leq C \sum \Lip_i(K)^q,
  \end{equation*}
since $M$ is bounded in $\ell^q(\Z)$ by
$C\norm{\Lip_i(K)}_{\ell^q(\Z)}$, by Theorem~\ref{thm:maximal}. This
is the desired upper bound.

We turn to $U_k$. We have
  \begin{align*}
  \sum_k \norm{U_k}_{L^{q,w}}^q
  &\leq \sum_k \norm{U_k}_{L^q}^q
  \\&
  \leq \sum_k \sum_h \mu(\phi=h+1) \left(\sum_{a=0}^{k-h} \Lip_a(K)\min\left( (h+1)c_{k-h-a}^{(q)}, c_{k-h-a}^{(q-1)}\right)\right)^q.
  \end{align*}
The next lemma shows that this is bounded by $C\sum \Lip_i(K)^q$ (set
$\epsilon=q-1$, $n=k-h$, $i=a$ and $u_i=\Lip_i(K)$ to reduce to this
statement). This concludes the proof.
\qed

\begin{lem}
\label{lem:technical_q<2}
Let $q>1$ and $\epsilon>0$. Consider a sequence $a_n\geq 0$ with
$\sum_{n\geq N}a_n = O(N^{-q})$. There exists a constant $C$ such
that, for any sequence $(u_n)\in \ell^q(\Z)$,
  \begin{equation*}
  \sum_{n\in \Z} \sum_{h\geq 0} a_h \left|\sum_{i\in \Z}u_i \cdot
     \min\left(\frac{h+1}{1+\abs{n-i}^{1+\epsilon}}, \frac{1}{1+\abs{n-i}^{\epsilon}}\right)\right|^q
  \leq C\sum_{n\in \Z} \abs{u_n}^q.
  \end{equation*}
\end{lem}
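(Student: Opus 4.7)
The plan is to rewrite the inner bracket as a convolution on $\Z$, apply Young's inequality, and reduce the claim to a summability statement about $a_h$ that is handled by Lemma~\ref{lem:karamata}.

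Set $\psi_h(j) = \min\bigl((h+1)/(1+\abs{j}^{1+\epsilon}),\,1/(1+\abs{j}^\epsilon)\bigr)$ for $j\in\Z$, so that the inner bracket in the lemma equals the convolution $(\psi_h\star u)(n)=\sum_{i\in\Z}u_i\psi_h(n-i)$. Young's convolution inequality on $\Z$ with exponents $1,q\to q$ gives $\sum_{n}\abs{(\psi_h\star u)(n)}^q\leq \norm{\psi_h}_{\ell^1}^q\norm{u}_{\ell^q}^q$. Summing over $h$ therefore reduces the lemma to showing that $\sum_{h\geq 0}a_h\norm{\psi_h}_{\ell^1}^q<\infty$.

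To estimate $\norm{\psi_h}_{\ell^1}$, observe that the crossover between the two arguments of the minimum happens at $\abs{j}\sim h+1$: for $\abs{j}\leq h+1$ the second term $1/(1+\abs{j}^\epsilon)$ is the smaller one (up to a constant), and for $\abs{j}>h+1$ the first is. Summing each piece separately, a direct computation yields
\[
\norm{\psi_h}_{\ell^1} \leq C \cdot \begin{cases} (h+1)^{1-\epsilon} & \text{if }\epsilon\in(0,1), \\ \log(h+2) & \text{if }\epsilon=1, \\ 1 & \text{if }\epsilon>1. \end{cases}
\]
In each of these regimes there is some $\alpha<q$ with $\norm{\psi_h}_{\ell^1}^q\leq C(h+1)^\alpha$: for $\epsilon\in(0,1)$ take $\alpha=q(1-\epsilon)<q$, while for $\epsilon\geq 1$ any small $\alpha>0$ suffices. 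Applying Lemma~\ref{lem:karamata} to the sequence $a_h$ at this exponent $\alpha<q$ produces $\sum_h (h+1)^\alpha a_h<\infty$, which concludes the proof.

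The main point is the estimate of $\norm{\psi_h}_{\ell^1}$: one has to identify that the effective mass of $\psi_h$ lives on a window of width $\sim h+1$, and that the hypothesis $\epsilon>0$ is exactly what forces this $\ell^1$-norm to grow strictly slower than $h+1$. Once this bookkeeping at the critical scale is done, the rest is routine: Young absorbs the convolution, and Karamata's lemma handles the resulting sum against the tail of $a_h$.
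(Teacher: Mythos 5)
Your proof is correct, and it takes a genuinely different (and more elementary) route than the paper. The paper normalizes the convolution, setting $v(n,h)=(1+h)^{-(1-\epsilon)}(\psi_h\star u)(n)$, views $v$ as a function on $\Z\times\N$ equipped with the weighted measure $\nu(n,h)=a_h(1+h)^{q(1-\epsilon)}$, checks that $u\mapsto v$ is bounded from $\ell^1$ to $\ell^1(\nu)$ (by Fubini) and from $\ell^\infty$ to $\ell^\infty(\nu)$, and then invokes Marcinkiewicz interpolation (Theorem~\ref{thm:interpolation}) to get the $\ell^q\to\ell^q(\nu)$ bound, which unwinds to the stated inequality. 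You instead fix $h$, apply Young's inequality $\norm{\psi_h\star u}_{\ell^q}\leq\norm{\psi_h}_{\ell^1}\norm{u}_{\ell^q}$, and sum over $h$ against $a_h$. The two arguments rest on exactly the same two estimates: the kernel bound $\norm{\psi_h}_{\ell^1}\leq C(1+h)^{1-\epsilon}$ (which is precisely the displayed computation of $\sum_i\min(\cdots)$ in the paper's proof, and where $\epsilon>0$ is used) and the finiteness of $\sum_h a_h(1+h)^{q(1-\epsilon)}$ via Lemma~\ref{lem:karamata}. Since Young's inequality is itself an interpolation statement between the $\ell^1$ and $\ell^\infty$ endpoints, the proofs are morally identical, but yours avoids the auxiliary measure and normalization and is cleaner to read. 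What the paper's heavier setup buys is uniformity with the companion Lemma~\ref{lem:ineqq>2}, where the kernel is a sharp window of width $2h+1$ rather than a summable one, so that a per-$h$ Young bound is useless and one genuinely needs the maximal function (Theorem~\ref{thm:maximal}) plus interpolation; here the kernel is summable and your shortcut is legitimate. One small presentational point: the exact location of the crossover in the minimum is irrelevant, since one may simply bound $\psi_h(j)$ by $1/(1+\abs{j}^\epsilon)$ for $\abs{j}\leq h+1$ and by $(h+1)/(1+\abs{j}^{1+\epsilon})$ for $\abs{j}>h+1$; your computation already does this in effect.
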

\begin{proof}
We proceed as in the proof of Lemma~\ref{lem:ineqq>2}. Define a
sequence
  \begin{equation*}
  v(n,h) = \frac{1}{(1+h)^{1-\epsilon}} \sum_{i\in \Z}u_i \cdot\min\left(\frac{h+1}{1+\abs{n-i}^{1+\epsilon}}, \frac{1}{1+\abs{n-i}^{\epsilon}}\right).
  \end{equation*}
We consider it as a function on the space $\Z\times \N$ with the
measure $\nu(n,h) = a_h (1+h)^{q(1-\epsilon)}$. We have for any $n\in
\Z$
  \begin{multline*}
  \sum_{i\in \Z} \min\left(\frac{h+1}{1+\abs{n-i}^{1+\epsilon}}, \frac{1}{1+\abs{n-i}^{\epsilon}}\right)
  \\
  \leq \sum_{\abs{m}\leq h} \frac{1}{1+\abs{m}^{\epsilon}} + (h+1)\sum_{\abs{m}>h} \frac{1}{1+\abs{m}^{1+\epsilon}}
  \leq C(1+h)^{1-\epsilon}.
  \end{multline*}
This shows that the operator $A:u\mapsto v$ is bounded from
$\ell^\infty(\Z, \mu)$ (where $\mu$ is the counting measure) to
$\ell^\infty(\Z\times \N, \nu)$. Moreover, writing $m=n-i$,
  \begin{align*}
  \sum_{n,h} \nu(n,h)& v(n,h)
  \\&
  = \sum_{n\in \Z} \sum_{h\geq 0} a_h (1+h)^{q(1-\epsilon)} \frac{1}{(1+h)^{1-\epsilon}}
    \sum_{i\in \Z}u_i \cdot\min\left(\frac{h+1}{1+\abs{n-i}^{1+\epsilon}}, \frac{1}{1+\abs{n-i}^{\epsilon}}\right)
  \\&
  = \sum_{i\in \Z} u_i \cdot \sum_{h\geq 0}  a_h (1+h)^{(q-1)(1-\epsilon)}
      \sum_{m\in \Z}\min\left(\frac{h+1}{1+\abs{m}^{1+\epsilon}}, \frac{1}{1+\abs{m}^{\epsilon}}\right).
  \end{align*}
As we have seen above, the last sum over $m$ is
$O((1+h)^{1-\epsilon})$. Hence, the sum over $h$ and $m$ reduces to
$\sum_h a_h (1+h)^{q(1-\epsilon)}$, which is finite by
Lemma~\ref{lem:karamata}. This shows that $\norm{v}_{\ell^1(\nu)}
\leq C \norm{u}_{\ell^1(\mu)}$.

The operator $A:u\mapsto v$ is bounded from $\ell^r(\mu)$ to
$\ell^r(\nu)$ for $r=1$ and $\infty$. By interpolation (see
Theorem~\ref{thm:interpolation}), it is also bounded from
$\ell^q(\mu)$ to $\ell^q(\nu)$. This is the desired inequality.
\end{proof}

\subsection{The case \texorpdfstring{$q=2$}{q=2}}

In this paragraph, we prove Theorem~\ref{main_thm:concentration} in
the case $q=2$. As we explained after the statement of this theorem,
it suffices to prove the result for $p\geq 2$. We follow essentially
the same steps as in the $q>2$ case. We start with
Burkholder-Rosenthal inequality~\eqref{eq:burkh_useful}
  \begin{multline*}
  \int \abs{K(x,Tx,\dotsc)-\E(K)}^{p}\dd\mu(x)
  \\
  \leq C \left(\sum \norm{\E(D_k^2 \given \boF_{k+1})}_{\infty}\right)^{p/2}
  + C \sum \norm{\E(\abs{D_k}^{p} \given \boF_{k+1})}_{\infty}.
  \end{multline*}
Moreover, for $r\in \{2, p\}$, we have
  \begin{align}
  \label{eq:q=2_D_k}
  \raisetag{-50pt}
  \begin{split}
   \norm{\E(\abs{D_k}^r \given \boF_{k+1})}_\infty
  \leq {} &
  C\sum_{h\geq 0} \mu(\phi=h+1) \left( \sum_{a=0}^{k-h} \Lip_a(K) \min\left((h+1)c_{k-h-a}^{(q)}, c_{k-h-a}^{(q-1)}\right)\right)^r
  \\&
  +C \sum_{h\geq 0} \mu(\phi=h+1) \left(\sum_{a=k-h+1}^k \Lip_a(K)\right)^r.
  \end{split}
  \end{align}

Let us first consider the contribution of the first line when we sum
over $k$. For $r=2$, Lemma~\ref{lem:technical_q<2} shows that the
resulting term is bounded by $C\sum \Lip_a(K)^2$. Its contribution to
Burkholder-Rosenthal inequality is therefore at most
  \begin{align*}
  C\left(\sum \Lip_i(K)^2 \right)^{p/2}
  &=C \left(\sum \Lip_i(K)^2 \right)\cdot \left(\sum \Lip_i(K)^2 \right)^{p/2-1}
  \\&\leq C \left(\sum \Lip_i(K)^2 \right) \left(\sum \Lip_i(K) \right)^{p-2},
  \end{align*}
since $\sum x_i^2 \leq (\sum x_i)^2$. This bound is compatible with
the statement of the theorem. For $r=p$, we write
  \begin{multline*}
  \left( \sum_{a=0}^{k-h} \Lip_a(K) \min\left((h+1)c_{k-h-a}^{(q)}, c_{k-h-a}^{(q-1)}\right)\right)^p
  \\\leq  \left( \sum_{a=0}^{k-h} \Lip_a(K) \min\left((h+1)c_{k-h-a}^{(q)}, c_{k-h-a}^{(q-1)}\right)\right)^2
    \cdot\left(\sum_{a\in \Z} \Lip_a(K)\right)^{p-2}.
  \end{multline*}
Using again Lemma~\ref{lem:technical_q<2}, it follows that the
contribution of this term to Burkholder-Rosenthal inequality is at
most $\left(\sum \Lip_i(K)^2 \right) \left(\sum \Lip_i(K)
\right)^{p-2}$ as desired.

Let us now turn to the second line of~\eqref{eq:q=2_D_k}. We define a
sequence
  \begin{equation*}
  v(k,h) = \sum_{a=k-h+1}^k \Lip_a(K)
  \end{equation*}
on the space $\Z\times \N$ with the measure $\nu(k,h)=\mu(\phi=h+1)$.
It satisfies $\norm{v}_{\ell^\infty} \leq \sum_{a\in \Z} \Lip_a(K)$.
Let us control its weak $L^2$ norm. Let $s\geq 0$. For fixed $k$, let
$h_0(k)$ be the smallest $h$ such that $\sum_{a=k-h+1}^k \Lip_a(K)
\geq s$. Then
  \begin{multline*}
  \nu\{(k,h)\st v(k,h)\geq s\} = \sum_k \sum_{h \geq h_0(k)} \mu(\phi=h+1)
  \leq C\sum_k (1+h_0(k))^{-2}
  \\
  \leq C\sum_k (1+h_0(k))^{-2} \left(\sum_{a=k-h_0(k)+1}^k \Lip_a(K)/s\right)^2
  \leq C s^{-2} \sum_k M(k)^2,
  \end{multline*}
where $M(k)$ is the maximal function associated to $\Lip_a(K)$,
defined in~\eqref{def_Mk}. By Theorem~\ref{thm:maximal}, it satisfies
$\sum_k M(k)^2 \leq C \sum \Lip_a(K)^2$. Hence, we have proved that
the weak $L^2$ norm of $v$ is bounded by $C \left(\sum
\Lip_a(K)^2\right)^{1/2}$.

Using the bounds on the weak $L^2$ norm of $v$ and on its $L^\infty$
norm, one deduces a bound on its strong $L^2$ norm as
in~\eqref{eq:deduce_strong_from_weak}, and on its strong $L^p$ norm
for $p>2$ as in~\eqref{eq:absvp}. These bounds read:
  \begin{multline*}
  \sum_k \sum_h \mu(\phi=h+1) \left(\sum_{a=k-h+1}^k \Lip_a(K)\right)^2
  \\
  \leq C \left(\sum_{a\in \Z} \Lip_a(K)^2\right)
    \left[1+\log\left(\sum_{a\in \Z} \Lip_a(K)\right)-\log\left(\sum_{a\in \Z} \Lip_a(K)^2\right)^{1/2}\right]
  \end{multline*}
and for $p>2$
  \begin{equation*}
  \sum_k \sum_h \mu(\phi=h+1) \left(\sum_{a=k-h+1}^k \Lip_a(K)\right)^p
  \leq C \left(\sum_{a\in \Z} \Lip_a(K)^2\right) \left(\sum_{a\in \Z} \Lip_a(K)\right)^{p-2}.
  \end{equation*}

For $p=2$, we deduce directly that the contribution of the second
line of~\eqref{eq:q=2_D_k} to Burkholder-Rosenthal inequality is
bounded as in the statement of the theorem.

For $p>2$, we also obtain that the contribution of this line, for
$r=p$, is bounded as desired. It remains to check the contribution of
this line with $r=2$. Writing $u_a=\Lip_a(K)$, we should prove that
  \begin{equation*}
  \left(\sum_{a\in \Z} u_a^2\right)^{p/2}
    \left[1+\log\left(\sum_{a\in \Z} u_a\right)-\log\left(\sum_{a\in \Z} u_a^2\right)^{1/2}\right]^{p/2}
  \leq C \left(\sum_{a\in \Z} u_a^2\right) \left(\sum_{a\in \Z} u_a\right)^{p-2}.
  \end{equation*}
Since this equation is homogeneous, it suffices to prove it when
$\sum u_a^2=1$. In this case, writing $x = \sum u_a \geq 1$, it
reduces to the inequality $(1+\log x)^{p/2} \leq C x^{p-2}$, which is
trivial on $[1, \infty)$.
\qed

\appendix
\section{Speed of convergence to stable laws}
\label{sec:lower_stable}

In this appendix, our goal is to prove
Proposition~\ref{prop:lower_stable}. To do so, we estimate the
speed of convergence of the Birkhoff sums to the stable law, first on
the basis $\Delta_0$ of the tower using the Nagaev-Guivarc'h spectral
method. Then, we induce back those estimates to the whole tower.
Those ideas are classical: the first step comes
from~\cite{aaronson_denker}, the second step
from~\cite{melbourne_torok} (see~\cite{gouezel_lausanne} for a
general explanation of the method). However, since we want
quantitative estimates, we need to go beyond the results of these
articles.

The standing assumptions are those of
Proposition~\ref{prop:lower_stable}: $(\Delta,T)$ is a Young tower
with $\tail_n = C n^{-q}+O(n^{-q-\epsilon})$, for some $q\in (1,2)$
and some $\epsilon>0$. Without loss of generality, we can assume
$q+\epsilon<2$.

Let $Y=\Delta_0$ be the basis of the Young tower. We denote by
$T_Y:Y\to Y$ the induced map on the basis, by $\mu_Y =
\mu\restr{Y}/\mu(Y)$ the induced probability measure, by $S_n^Y$ the
Birkhoff sums for $T_Y$, and by $\phi:Y\to \N$ the first return time
to $Y$.

We define a function $f$ on the tower, by $f=1 - 1_Y /\mu(Y)$, so
that $\int f\dd\mu=0$. The induced function on the basis of the tower
is by definition
  \begin{equation*}
  f_Y(x) = \sum_{k=0}^{\phi(x)-1} f(T^k x) = \phi(x) -1_Y/\mu(Y).
  \end{equation*}
Denote by $L$ the transfer operator associated to $T_Y$, and define a
family of perturbed transfer operators $L_t(u)=L(e^{\ic t f_Y} u)$.
Their interest is that
  \begin{equation}
  \label{eq:charac}
  \int_Y e^{\ic t S_n^Y f_Y} \dd\mu_Y= \int_Y L_t^n 1 \dd\mu_Y.
  \end{equation}
Hence, spectral properties of $L_t$ make it possible to
understand the characteristic function of $S_n^Y f_Y$, and therefore
its closeness to the limiting stable law.

\begin{lem}
The family of operators $t\mapsto L_t$ is $C^1$.
\end{lem}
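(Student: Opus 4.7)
The statement that $t\mapsto L_t$ is $C^1$ must be interpreted in a mixed-norm sense, the natural choice being as a map from $\R$ to $\mathcal{B}(L^\infty(\mu_Y),L^1(\mu_Y))$, with candidate derivative
\[
L'_t u = L(\ic f_Y e^{\ic tf_Y}u).
\]
This formulation is forced by the fact that $f_Y=\phi-1/\mu(Y)$ is unbounded: the tail bound $\mu_Y\{\phi\ge n\}=O(n^{-q})$ gives $f_Y\in L^p(\mu_Y)$ only for $p<q<2$, so $L'_tu$ need not be bounded (let alone Lipschitz), which rules out $C^1$ in the naive $\mathrm{Lip}\to\mathrm{Lip}$ operator topology.

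First I would verify that $L'_t\in\mathcal{B}(L^\infty,L^1)$: since $q>1$, Lemma~\ref{lem:karamata} yields $\|f_Y\|_{L^1(\mu_Y)}<\infty$, and because $T_Y$ preserves $\mu_Y$ the operator $L$ is a contraction on $L^1(\mu_Y)$, so
\[
\|L'_tu\|_{L^1}\leq \int|f_Y|\,|u|\dd\mu_Y\leq \|f_Y\|_{L^1}\|u\|_\infty.
\]
Continuity of $t\mapsto L'_t$ in the same norm follows from dominated convergence, the integrand $|f_Y|\cdot|e^{\ic(t+h)f_Y}-e^{\ic tf_Y}|$ vanishing pointwise and being dominated by $2|f_Y|\in L^1$.

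The heart of the proof is the exact identity
\[
(L_{t+h}-L_t-hL'_t)u \;=\; L\bigl(e^{\ic tf_Y}(e^{\ic hf_Y}-1-\ic hf_Y)u\bigr),
\]
combined with the elementary pointwise bound $|e^{\ic x}-1-\ic x|\leq C|x|^{1+\beta}$ valid for every $\beta\in[0,1]$ (by interpolating $|x|^2/2$ for $|x|\leq 1$ with $2+|x|$ for $|x|\geq 1$). Integrating and using the $L^1$-contraction of $L$,
\[
\bigl\|(L_{t+h}-L_t-hL'_t)u\bigr\|_{L^1}\leq C|h|^{1+\beta}\|u\|_\infty\int|f_Y|^{1+\beta}\dd\mu_Y.
\]
The crucial move is to pick $\beta\in(0,q-1)$, which is possible precisely because $q>1$; Lemma~\ref{lem:karamata} then guarantees $\int|f_Y|^{1+\beta}\dd\mu_Y<\infty$, so the remainder is $o(|h|)\|u\|_\infty$ and $L'_t$ really is the derivative in $\mathcal{B}(L^\infty,L^1)$.

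The main obstacle is conceptual rather than computational: one cannot hope for $C^1$ in the Lipschitz-to-Lipschitz operator norm because $f_Y$ fails to be in $L^2$. The trick is to trade a weaker target space ($L^1$ in place of $L^\infty$) against better integrability of $f_Y$, which converts its pointwise unboundedness into a Taylor estimate; the gain $|h|^{1+\beta}$ with $\beta<q-1$ is essentially optimal under $\tail_n=O(n^{-q})$, and this mixed-norm formulation is exactly what Keller--Liverani-style spectral perturbation requires for the rest of the appendix.
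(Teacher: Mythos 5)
Your Taylor-remainder computation (the identity $(L_{t+h}-L_t-hL'_t)u=L\bigl(e^{\ic tf_Y}(e^{\ic hf_Y}-1-\ic hf_Y)u\bigr)$, the bound $\abs{e^{\ic x}-1-\ic x}\leq C\abs{x}^{1+\beta}$, and the choice $\beta<q-1$ so that $\int\abs{f_Y}^{1+\beta}\dd\mu_Y<\infty$) is exactly the standard Aaronson--Denker argument the paper invokes. But your framing rests on a false premise, and the resulting statement is too weak for what the appendix needs. You claim that $L'_tu=L(\ic f_Ye^{\ic tf_Y}u)$ ``need not be bounded'' because $f_Y\notin L^2$, but you are conflating the multiplication operator $u\mapsto f_Yu$ (indeed unbounded) with its composition with $L$. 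The induced map $T_Y$ is full-branch Gibbs--Markov, so $L w(x)=\sum_\alpha g_Y(y_\alpha)w(y_\alpha)$ with one preimage $y_\alpha$ in each $\Delta_{\alpha,0}$ and $g_Y(y_\alpha)\leq C\mu_Y(\Delta_{\alpha,0})$ by bounded distortion; since $f_Y=\phi-1/\mu(Y)$ is \emph{constant} on each $\Delta_{\alpha,0}$ and integrable, one gets $\norm{L(f_Ye^{\ic tf_Y}u)}_\infty\leq C\norm{u}_\infty\int\abs{f_Y}\dd\mu_Y$, and the Lipschitz seminorm is controlled the same way using the distortion estimate on $g_Y$ and the local constancy of $f_Y$. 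In other words, $L$ absorbs the unboundedness of $f_Y$, and your remainder estimate goes through verbatim in the $\Lip\to\Lip$ operator norm, which is the statement actually being asserted.

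This is not a cosmetic point: $C^1$ regularity in $\mathcal{B}(L^\infty,L^1)$ does not support the rest of the appendix. The spectral expansion~\eqref{eq:spectral_expansion} and the claim that $\lambda_t$, $\Pi_t$, $Q_t$ are $C^1$ come from Kato's analytic perturbation theory applied on the Banach space where $L_0$ has a spectral gap (the Lipschitz space), and that requires $t\mapsto L_t$ to be $C^1$ in \emph{that} operator norm. Likewise, the proof of Lemma~\ref{lem:lambdat} uses $\int_Y(L_t-L)(\xi_t-1)\dd\mu_Y=O(t^2)$, which needs $\norm{\xi_t-1}=O(t)$ in the strong norm. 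A Keller--Liverani-style mixed-norm perturbation argument, which is what your formulation would feed into, yields only H\"older continuity of the eigendata with a loss in the exponent, and would degrade the $O(t^{q+\epsilon})$ error term that Proposition~\ref{prop:BE_Y} depends on. So the missing idea is precisely the observation that makes the strong-norm statement true; once you have it, your computation is the right one.
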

\begin{proof}
We omit the standard argument which shows in fact that the family is
$C^q$, see for instance~\cite[Theorem 2.4]{aaronson_denker}.
\end{proof}

The unperturbed operator $L=L_0$ has a simple eigenvalue at $1$, and
the rest of its spectrum is contained in a disk of strictly smaller
radius. This spectral description persists for small $t$,
see~\cite{kato_pe}. Denote by $\lambda_t$ the leading eigenvalue of
$L_t$, by $\Pi_t$ the corresponding (one-dimensional) spectral
projection, and by $Q_t=L_t -\lambda_t \Pi_t$ the part of $L_t$
corresponding to the rest of the spectrum. All those quantities
depend in a $C^1$ way on $t$, by the previous proposition. Moreover,
for small $t$, we have
  \begin{equation}
  \label{eq:spectral_expansion}
  L_t^n = \lambda_t^n \Pi_t + Q_t^n,\quad \text{with } \norm{Q_t^n} \leq Cr^n,
  \end{equation}
for some fixed $r<1$. The main contribution in this equation comes
from the perturbed eigenvalue $\lambda_t$.

\begin{lem}
\label{lem:lambdat}
We have for small $t>0$
  \begin{equation*}
  \lambda_t= 1 + c t^q + O(t^{q+\epsilon}),
  \end{equation*}
where $c$ is a complex number with $\Re c <0$.
\end{lem}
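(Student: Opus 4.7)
The plan is to follow the Nagaev-Guivarc'h scheme: express $\lambda_t$ via an integral identity, reduce to the characteristic function of $\phi$, and then extract the leading $t^q$ term from the tail assumption. Specifically, starting from $L_t h_t = \lambda_t h_t$ and integrating against $\mu_Y$, I use the crucial fact that $\mu_Y$ is $L$-invariant so that $\int L_t u \dd\mu_Y = \int L(e^{\ic t f_Y} u) \dd\mu_Y = \int e^{\ic t f_Y} u \dd\mu_Y$ for every $u$. Normalizing the eigenfunction by $\int h_t \dd\mu_Y = 1$, this yields the master formula
  \begin{equation*}
  \lambda_t = \int e^{\ic t f_Y} h_t \dd\mu_Y.
  \end{equation*}

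Next I reduce to $\int e^{\ic t f_Y}\dd\mu_Y$. By the $C^1$ dependence of $L_t$ in $t$ and standard perturbation theory, $t \mapsto h_t$ is also $C^1$ in the Banach space norm, so in particular $\norm{h_t - 1}_\infty = O(t)$. Using $\int (h_t - 1)\dd\mu_Y = 0$ from the normalization, I rewrite
  \begin{equation*}
  \lambda_t - \int e^{\ic t f_Y}\dd\mu_Y
  = \int (e^{\ic t f_Y} - 1)(h_t - 1)\dd\mu_Y.
  \end{equation*}
Since $\abs{e^{\ic t f_Y} - 1} \leq \abs{t f_Y}$ and $f_Y = \phi - 1/\mu(Y) \in L^1(\mu_Y)$ by Kac's formula, $\norm{e^{\ic t f_Y}-1}_{L^1(\mu_Y)} = O(t)$, hence this remainder is $O(t^2)$. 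Since we have assumed $q + \epsilon < 2$, this is $O(t^{q+\epsilon})$, and therefore negligible.

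The core computation is then $\int e^{\ic t f_Y}\dd\mu_Y$. Using $\int f_Y\dd\mu_Y = 0$ and writing $f_Y = \phi - 1/\mu(Y)$, a brief expansion of $e^{-\ic t/\mu(Y)}$ about $t=0$ shows that modulo $O(t^2)$ this equals $1 + \int (e^{\ic t \phi} - 1 - \ic t \phi)\dd\mu_Y$. The tail assumption $\tail_n = C n^{-q} + O(n^{-q-\epsilon})$ combined with a discrete integration by parts gives $\mu_Y(\phi = n) = (qC/\mu(Y))n^{-q-1} + O(n^{-q-1-\epsilon})$. Inserting this and comparing the resulting series to its Riemann integral via the change of variable $u = tn$ yields, after checking that the integral $\int_0^\infty (e^{\ic u}-1-\ic u)u^{-q-1}\dd u$ converges at both ends for $q\in(1,2)$,
  \begin{equation*}
  \int (e^{\ic t \phi} - 1 - \ic t \phi)\dd\mu_Y
  = c\, t^q + O(t^{q+\epsilon}),
  \qquad
  c = \frac{qC}{\mu(Y)}\,\Gamma(-q)\, e^{-\ic \pi q/2}.
  \end{equation*}
The error bookkeeping — splitting the range of summation at $n \sim 1/t$ and absorbing the $O(n^{-q-1-\epsilon})$ correction to $\mu_Y(\phi=n)$ — is the principal technical step; this is essentially the argument in~\cite{aaronson_denker}, which one carries out more carefully here to track the $O(t^{q+\epsilon})$ remainder.

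To finish, observe that $\Gamma(-q) > 0$ for $q\in(1,2)$ (since $\Gamma(-q) = \Gamma(2-q)/[(-q)(1-q)]$ with all factors in the denominator negative and the numerator positive), while $\cos(\pi q/2) < 0$ for $q\in(1,2)$; hence $\Re c = (qC/\mu(Y))\Gamma(-q)\cos(\pi q/2) < 0$, as required.
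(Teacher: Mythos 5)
Your overall strategy is the same as the paper's (Nagaev--Guivarc'h: normalize the eigenfunction, reduce $\lambda_t$ to $\int_Y e^{\ic t f_Y}\dd\mu_Y$ up to $O(t^2)$, then extract the $t^q$ term from the tail hypothesis), and the first two stages are correct: your identity $\lambda_t-\int e^{\ic tf_Y}\dd\mu_Y=\int(e^{\ic tf_Y}-1)(h_t-1)\dd\mu_Y$, together with $\norm{h_t-1}_\infty=O(t)$ and $f_Y\in L^1(\mu_Y)$, gives the same $O(t^2)$ remainder the paper obtains by writing $\int L_t\xi_t\dd\mu_Y=\int L_t1\dd\mu_Y+\int(L_t-L)(\xi_t-1)\dd\mu_Y$. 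Your explicit verification that $\Re c<0$ is also a welcome addition (the paper leaves it implicit).

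However, there is a genuine gap in the core computation. You claim that $\tail_n=Cn^{-q}+O(n^{-q-\epsilon})$ yields, ``by a discrete integration by parts,'' the point-mass asymptotic $\mu_Y(\phi=n)=(qC/\mu(Y))n^{-q-1}+O(n^{-q-1-\epsilon})$. This implication is false: from $\mu_Y(\phi=n)=\mu_Y(\phi\geq n)-\mu_Y(\phi\geq n+1)$ the leading terms do produce $qC\,n^{-q-1}/\mu(Y)+O(n^{-q-2})$, but the two error terms only cancel to $O(n^{-q-\epsilon})$, which swamps the main term $n^{-q-1}$ whenever $\epsilon<1$. One cannot ``differentiate'' an $O$-estimate; summation by parts goes from point masses to tails, not the reverse. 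The correct move --- and the one the paper makes --- is to keep the quantity to be estimated in terms of the tails: integrate (or Abel-sum) by parts in the expression $\int(e^{\ic ts}-1-\ic ts)\dd G(s)$ so that the hypothesis $1-G(s)=c_1s^{-q}+O(s^{-q-\epsilon})$ is applied directly to $\ic t\int(e^{\ic ts}-1)(1-G(s))\dd s$, and only then rescale $\sigma=ts$ to isolate $c_2t^q$ with errors $O(t^2)+O(t^{q+\epsilon})$. As written, your derivation of the $t^q$ coefficient rests on an asymptotic for $\mu_Y(\phi=n)$ that the hypotheses do not provide, so the principal technical step of your proof does not go through without this reorganization.
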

\begin{proof}
Let $\xi_t$ denote the $C^1$ family of eigenfunctions of $L_t$ for
the eigenvalue $\lambda_t$, normalized so that $\int_Y \xi_t
\dd\mu_Y=1$. In particular, $\xi_0\equiv 1$.

Now
  \begin{align*}
  \lambda_t & =\int_Y \lambda_t \xi_t\dd\mu_Y
  =\int_Y L_t\xi_t\dd\mu_Y
  =\int_Y L_t1\dd\mu_Y + \int_Y (L_t-L)(\xi_t-1)\dd\mu_Y
  \\ & =\int_Y e^{\ic tf_Y}\dd\mu_Y+ O(t^2)
  \\ & =1 + \int_Y (e^{\ic tf_Y}-1-\ic tf_Y)\dd\mu_Y+ O(t^2).
  \end{align*}

Let $G(s)=\mu_Y(f_Y<s)$ denote the distribution function of $f_Y$. It
vanishes for $s<-M$, for $M=1-1/\mu(Y)$. The asymptotics of the tails
of the return time yield $1-G(s)=c_1 s^{-q}+A(s)$ where
$A(s)=O(s^{-q-\epsilon})$. Hence
  \begin{align*}
  \int_Y (e^{\ic t f_Y}-1-\ic t f_Y)\dd\mu_Y & =
  \int_{-M}^\infty (e^{\ic ts}-1-\ic ts)\dd G(s)  \\ & =
  -\int_{-M}^\infty (e^{\ic ts}-1-\ic ts)\dd(1-G(s))
  \\ & =(e^{-\ic tM}-1+\ic tM)+\ic t \int_{-M}^\infty (e^{\ic ts}-1)(1-G(s))\dd s
  \\ & =\ic t \int_1^\infty (e^{\ic ts}-1)(1-G(s))\dd s
  +O(t^2)
  \\ & =\ic \int_t^\infty (e^{\ic\sigma}-1)(1-G(\sigma/t))\dd\sigma + O(t^2) \\ & =
  \ic c_1t^ q \int_t^\infty (e^{\ic\sigma}-1)\sigma^{- q }\dd\sigma+
  \ic \int_t^\infty (e^{\ic\sigma}-1)A(\sigma/t)\dd\sigma + O(t^2) \\ & =
  c_2t^ q +E_1+E_2 + O(t^2),
  \end{align*}
where
  \begin{align*}
  c_2 &= \ic c_1\int_0^\infty (e^{\ic\sigma}-1)\sigma^{- q }\dd\sigma, \quad
  E_1  = -\ic c_1t^{ q }\int_0^t (e^{\ic\sigma}-1)\sigma^{- q }\dd\sigma, \\
  E_2 &=\ic \int_t^\infty (e^{\ic\sigma}-1) A(\sigma/t)\dd\sigma.
  \end{align*}
Note that $c_2$ is well-defined since $ q \in(1,2)$. Also,
$\abs{E_1}\le c_1t^{ q }\int_0^t\sigma^{-( q -1)}\dd\sigma=O(t^2)$.
There is a constant $C>0$ such that $\abs{A(s)}\le Cs^{-q-\epsilon}$
for $s\ge1$.  Hence
  \begin{equation*}
  \abs{E_2}\le Ct^{q+\epsilon} \int_t^\infty \abs{e^{\ic \sigma}-1} \sigma^{-q-\epsilon}
  \dd\sigma
  \leq Ct^{q+\epsilon} \int_0^\infty \min( \sigma^{1-q-\epsilon}, \sigma^{-q-\epsilon})\dd\sigma=O(t^{q+\epsilon}),
  \end{equation*}
where the integral is finite since $q+\epsilon \in (1,2)$.
\end{proof}

Let $Z_Y$ be the real probability distribution whose characteristic
function is given for $t>0$ by
  \begin{equation*}
  \E(e^{\ic t Z_Y})=e^{\ic c t^q},
  \end{equation*}
where $c$ is given by Lemma~\ref{lem:lambdat}. It is a totally
asymmetric stable law of index $q$. We can now estimate the speed of
convergence of $S_n^Y f_Y$ to $Z_Y$:
\begin{prop}
\label{prop:BE_Y}
There exists $C>0$ such that for any $n>0$ and for any $s\in \R$,
  \begin{equation*}
  \abs*{ \mu_Y\{ x\st S_n^Y f_Y(x)/n^{1/q} > s\} - \Pbb(Z_Y > s)} \leq C n^{-\epsilon/q}.
  \end{equation*}
\end{prop}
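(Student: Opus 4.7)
The plan is to apply Esseen's smoothing inequality, reducing the desired Kolmogorov distance bound to an $L^1$ estimate on the difference of characteristic functions, and then to control that difference through the spectral decomposition~\eqref{eq:spectral_expansion} and the eigenvalue asymptotics from Lemma~\ref{lem:lambdat}. Let $\phi_n(t)$ denote the characteristic function of $S_n^Y f_Y/n^{1/q}$ under $\mu_Y$, and let $\psi(t) = \E(e^{\ic t Z_Y})$ (equal to $e^{c t^q}$ for $t>0$, and to $\overline{\psi(-t)}$ for $t<0$). Since $Z_Y$ is a nondegenerate stable law, it has a bounded density, so Esseen's inequality with $T=\eta n^{1/q}$ gives
$$\sup_s \abs{\mu_Y\{S_n^Y f_Y/n^{1/q}>s\}-\Pbb(Z_Y>s)} \leq \frac{1}{\pi}\int_{-T}^{T}\frac{\abs{\phi_n(t)-\psi(t)}}{\abs{t}}\dd t + \frac{CM}{T},$$
where the remainder $CM/T = O(n^{-1/q})$ is dominated by $n^{-\epsilon/q}$ since $\epsilon\leq q$.

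Next, I would use~\eqref{eq:charac} to write $\phi_n(t)=\int_Y L_{t/n^{1/q}}^n 1\dd\mu_Y$, and then plug in the spectral decomposition~\eqref{eq:spectral_expansion} valid for $\abs{t/n^{1/q}}\leq \eta$:
$$\phi_n(t)=\lambda_{t/n^{1/q}}^n\,\alpha(t/n^{1/q}) + R_n(t),$$
where $\alpha(s)=\int_Y \Pi_s 1\dd\mu_Y$ is $C^1$ with $\alpha(0)=1$, and $R_n(t)=\int_Y Q_{t/n^{1/q}}^n 1\dd\mu_Y$. Because $L1=1$ and $\Pi_0 1=1$, we have $Q_0 1=0$, hence $\norm{Q_s 1}=O(\abs{s})$ by $C^1$ regularity, yielding $\abs{R_n(t)}\leq C\abs{t}r^n/n^{1/q}$. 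Now Lemma~\ref{lem:lambdat} together with the fact that $q+\epsilon<2$ (and therefore $q+\epsilon<2q$) gives the Taylor expansion $n\log\lambda_{t/n^{1/q}}=c t^q + E(t,n)$ with $\abs{E(t,n)}\leq C\abs{t}^{q+\epsilon}/n^{\epsilon/q}$ uniformly on $\abs{t}\leq \eta n^{1/q}$. Since $\Re c<0$, for $\eta$ small enough one further obtains $\abs{\lambda_{t/n^{1/q}}^n}\leq e^{-c' t^q}$ throughout this range. Splitting into the regimes $\abs{E}\leq 1$ (Taylor bound on $e^E-1$) and $\abs{E}>1$ (trivial bound combined with $\abs{t}^{q+\epsilon}/n^{\epsilon/q}\geq 1$), one concludes
$$\abs{\lambda_{t/n^{1/q}}^n - e^{c t^q}} \leq C e^{-c' t^q}\,\abs{t}^{q+\epsilon}/n^{\epsilon/q}.$$

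Combining these ingredients,
$$\abs{\phi_n(t)-\psi(t)}\leq C e^{-c' t^q}\bigl(\abs{t}^{q+\epsilon}/n^{\epsilon/q}+\abs{t}/n^{1/q}\bigr)+ C\abs{t}r^n/n^{1/q}.$$
Dividing by $\abs{t}$ and integrating over $[-T,T]$, the three contributions yield $O(n^{-\epsilon/q})$, $O(n^{-1/q})$, and $O(r^n)$ respectively, the convergence of the $t$-integrals being ensured by the heavy-tail factor $e^{-c' t^q}$ (integrability at $0$ uses $q+\epsilon>1$). Together with the Esseen remainder this delivers the claim.

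The main subtlety is the uniform control of $\lambda_{t/n^{1/q}}^n-e^{c t^q}$ on the full window $\abs{t}\leq\eta n^{1/q}$: naively the Taylor remainder $E(t,n)$ is not small in absolute value throughout this range, and one must exploit the stable-law decay coming from $\Re c<0$ to absorb the large-$\abs{t}$ part. The rest of the argument is standard once the spectral picture is set up.
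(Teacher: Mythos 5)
Your proof is correct and follows essentially the same strategy as the paper: Berry--Esseen smoothing, the spectral decomposition $\phi_n(t)=\lambda_{t/n^{1/q}}^n\alpha(t/n^{1/q})+R_n(t)$, and the eigenvalue asymptotics from Lemma~\ref{lem:lambdat}, with the heavy-tail factor $e^{-c't^q}$ from $\Re c<0$ absorbing the growth of the Taylor remainder. The two small departures from the paper's write-up are both harmless simplifications: you exploit $Q_01=0$ to control $R_n(t)/t$ down to $t=0$ (the paper instead excises $[0,1/n]$ and treats that piece by a direct estimate on $\phi_n-1$ and $\psi-1$), and you package the eigenvalue error as a single pointwise bound $\abs{\lambda^n-e^{ct^q}}\leq Ce^{-c't^q}\abs{t}^{q+\epsilon}/n^{\epsilon/q}$ via the split $\abs{E}\lessgtr 1$, where the paper splits the integral into $I_1+I_2$ according to whether the error comes from $\alpha$ or from $e^{nB}-1$. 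One tiny slip: the reason that $n^{-1/q}$ is dominated by $n^{-\epsilon/q}$ is $\epsilon<1$ (which follows from the standing normalization $q+\epsilon<2$, $q>1$), not ``$\epsilon\leq q$'' as you wrote, though this does not affect the conclusion.
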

In particular, we recover the (already known) convergence of $S_n^Y
f_Y/n^{1/q}$ to $Z_Y$, the novelty being the control on the speed of
convergence. Below, in Proposition~\ref{prop:BE_Delta} and
Theorem~\ref{thm:BE_Delta}, we also recover known stable limits,
with additional controls on the speed of convergence.
\begin{proof}
The quantity to estimate is the $L^\infty$-norm of the difference
between the distribution functions of $S_n^Y f_Y/n^{1/q}$ and $Z_Y$.
Berry-Esseen's lemma (see for instance~\cite[Lemma
XVI.3.2]{feller_2}) ensures that, for any $M>0$, this quantity is
bounded by
  \begin{equation}
  \label{eq:BE}
  C\int_{0}^M \frac{\abs*{\phi_n(t) -\psi(t)}}{t}\dd t + \frac{C}{M}
  \end{equation}
where $C$ is a universal constant, and $\phi_n$ and $\psi$ denote
respectively the characteristic functions of $S_n^Y f_Y/n^{1/q}$ and
$Z_Y$. We estimate this integral, taking $M=M_n=\alpha_0
n^{1/q}$ for some suitably small $\alpha_0$.

First, for $t<1/n$, we have
  \begin{equation*}
  \abs{\phi_n(t) -1} = \abs{\E(e^{\ic t S_n^Y f_Y/n^{1/q}}-1)}
  \leq t \int \abs{S_n^Y f_Y}\dd\mu_Y/n^{1/q} \leq Ct n^{1-1/q}.
  \end{equation*}
In the same way, $\abs{\psi(t)-1} \leq Ct^q$. Hence,
  \begin{equation*}
  \int_{0}^{1/n} \frac{\abs*{\phi_n(t) -\psi(t)}}{t}
  \leq C n^{-1/q}+ C n^{-q} \leq C n^{-1/q}.
  \end{equation*}

Now, we turn to the interval $t\in [1/n, M_n]$. Combining the
formula~\eqref{eq:charac} for $\phi_n$ and the spectral
expansion~\eqref{eq:spectral_expansion} of $L_t^n$, we get
  \begin{equation*}
  \phi_n(t) = \lambda(t/n^{1/q})^n u(t/n^{1/q}) + r_n(t/n^{1/q}),
  \end{equation*}
where $r_n$ is exponentially small, $u$ is a $C^1$ function close to
$0$ and the asymptotic expansion of $\lambda$ is given in
Lemma~\ref{lem:lambdat}. The contribution of $r_n$ to the
integral~\eqref{eq:BE} is exponentially small (this is why we had to
discard the interval $[0, 1/n]$). We can write $\lambda(s)=e^{c s^q +
B(s)}$ where $B(s)=O(s^{q+\epsilon})$, by Lemma~\ref{lem:lambdat}.
Hence,
  \begin{equation*}
  \lambda(t/n^{1/q})^n = e^{n(c t^q/n + B(t/n^{1/q}))} = e^{ct^q} e^{n B(t/n^{1/q})}
  = \psi(t) e^{n B(t/n^{1/q})}.
  \end{equation*}
The remaining part of the integral~\eqref{eq:BE} can be written as
  \begin{multline*}
  \int_{1/n}^{M_n} \frac{\abs{ \psi(t) e^{n B(t/n^{1/q})} u(t/n^{1/q}) -\psi(t)}}{t} \dd t
  \\
  \leq \int_0^{M_n} \abs*{\psi(t) e^{n B(t/n^{1/q})}} \frac{ \abs{u(t/n^{1/q}) -1}}{t}\dd t
  + \int_0^{M_n} \abs{\psi(t)} \frac{\abs{e^{n B(t/n^{1/q})} -1}}{t}\dd t
  \eqqcolon I_1 + I_2.
  \end{multline*}
In $I_1$, we have
  \begin{equation*}
  \frac{n \abs{B(t/n^{1/q})}}{t^q} \leq C (t/n^{1/q})^{q+\epsilon-q}
  \leq C \alpha_0^{\epsilon}.
  \end{equation*}
Hence, $\abs*{\psi(t) e^{n B(t/n^{1/q})}} \leq e^{\Re(c) t^q} e^{C
\alpha_0^{\epsilon} t^q}$. If $\alpha_0$ is small enough, this is
bounded by $e^{-a t^q}$, for some $a>0$. Since the function $u$ is
$C^1$ with $u(0)=1$, it follows that
  \begin{equation*}
  I_1 \leq C\int_0^{M_n} e^{-a t^q} n^{-1/q} \dd t \leq C n^{-1/q}.
  \end{equation*}
Finally, in $I_2$, we use the inequality $\abs{e^s-1} \leq
\abs{s}e^{\abs{s}}$, to get a bound
  \begin{equation*}
  I_2 \leq \int_0^{M_n} \abs*{\psi(t) e^{n \abs{B(t/n^{1/q})}}}  \frac{n \abs{B(t/n^{1/q})}}{t}\dd t.
  \end{equation*}
As above, the factor $\abs*{\psi(t) e^{n \abs{B(t/n^{1/q})}}}$ is
bounded by $e^{-a t^q}$. Moreover, the second factor is bounded by
$t^{q+\epsilon-1} n^{-\epsilon/q}$. This gives $I_2 \leq C
n^{-\epsilon/q}$.

Finally, we obtain a bound for~\eqref{eq:BE} of the form $C n^{-1/q}
+ C n^{-\epsilon/q}$, which is bounded by $C n^{-\epsilon/q}$ as
$\epsilon< 2-q<1$.
\end{proof}

We can then lift the above bound to the original Birkhoff sums $S_n
f$. Let $Z=\mu(Y)^{1/q}Z_Y$, it is again a (completely asymmetric)
stable law of index $q$.
\begin{prop}
\label{prop:BE_Delta}
Let $\delta=\min((q-1)/(1+2q^2), \epsilon/q) > 0$. There exists $C>0$
such that for any $n>0$ and for any $s\in \R$,
  \begin{equation*}
  \abs*{ \mu_Y\{ x\st S_n f(x)/n^{1/q} > s\} - \Pbb(Z > s)} \leq C n^{-\delta}.
  \end{equation*}
\end{prop}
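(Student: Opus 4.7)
The plan is to exploit the particularly clean structure of $f = 1 - 1_Y/\mu(Y)$: for any $x$, $S_n f(x) = n - N(n,x)/\mu(Y)$, where $N(n,x) = \#\{0\leq k<n \st T^k x \in Y\}$. For $x\in Y$, write the successive visits to $Y$ as the return times $\phi_k(x) = S_k^Y \phi(x)$. The event $\{S_n f(x) > s n^{1/q}\}$ is then exactly $\{N(n,x) < m\}$ with $m = \lceil n\mu(Y) - s\mu(Y) n^{1/q}\rceil$, and this in turn is $\{\phi_{m-1}(x) \geq n\}$. Using $f_Y = \phi - 1/\mu(Y)$, we obtain $\phi_{m-1}(x) = (m-1)/\mu(Y) + S_{m-1}^Y f_Y(x)$, so the event on the tower becomes an event about an induced Birkhoff sum:
\begin{equation*}
\bigl\{S_M^Y f_Y(x) \geq s n^{1/q} + c_n(s)\bigr\},\qquad M := m-1,\qquad c_n(s)=O(1).
\end{equation*}

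Having done this reduction, I would introduce $\tilde s = (sn^{1/q} + c_n(s))/M^{1/q}$ and apply Proposition~\ref{prop:BE_Y} on the base to get
\begin{equation*}
\mu_Y\bigl\{S_M^Y f_Y/M^{1/q} > \tilde s\bigr\} = \Pbb(Z_Y > \tilde s) + O(M^{-\epsilon/q}).
\end{equation*}
Provided $|s|\leq c_0 n^{1-1/q}$ for a small constant $c_0$, one has $M\asymp n$ so the remainder is $O(n^{-\epsilon/q})$. To finish the identification with $\Pbb(Z>s)=\Pbb(Z_Y > s/\mu(Y)^{1/q})$, I would use that the stable law $Z_Y$ has a bounded Lebesgue density, so
\begin{equation*}
\bigl|\Pbb(Z_Y > \tilde s) - \Pbb(Z > s)\bigr| \leq C\bigl|\mu(Y)^{1/q}\tilde s - s\bigr|.
\end{equation*}
A direct calculation with $M = n\mu(Y) - s\mu(Y) n^{1/q} + O(1)$, expanding $(n/M)^{1/q}$ as a geometric series in $s n^{1/q-1}$, yields $|\mu(Y)^{1/q}\tilde s - s| = O(s^2 n^{1/q-1}) + O(n^{-1/q})$.

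It remains to treat large $s$. I would pick a threshold $n^{\delta_1}$ and, in the regime $|s|>n^{\delta_1}$, dispense with the spectral comparison and instead bound both quantities by their tails: the weak moment estimate~\eqref{eq:weak_q} gives $\mu_Y\{|S_n f| > sn^{1/q}\} \leq C\mu(Y)^{-1} s^{-q}$ after converting $\mu_Y\leq \mu/\mu(Y)$ on $Y$, while $\Pbb(|Z|>s)\leq Cs^{-q}$ is immediate from the stable tails. This yields an error $O(n^{-q\delta_1})$ in the large-$s$ regime, while the small-$s$ regime contributes $O(n^{-\epsilon/q}) + O(n^{2\delta_1 + 1/q - 1}) + O(n^{-1/q})$. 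Balancing $q\delta_1$ against $1 - 1/q - 2\delta_1$ (the two exponents that depend on $\delta_1$) and keeping the $\epsilon/q$ term gives the announced value of $\delta$.

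The main obstacle I expect is the nonlinear dependence of $\tilde s$ on $s$ through $M^{1/q}$, which produces an error that grows like $s^2 n^{1/q-1}$ and forces the splitting between the Berry--Esseen regime and the tail regime; keeping uniform control in $s$ through this change of variables (including the bounded discretisation errors $c_n(s)$, and the requirement that the geometric expansion $1/(1 - s n^{1/q-1})$ converge) is the delicate part. The auxiliary facts needed --- that $Z_Y$ has bounded density, that $\int_Y \phi\dd\mu_Y = 1/\mu(Y)$, and that $\mu_Y\leq \mu/\mu(Y)$ on $Y$ --- are all standard and can be invoked directly.
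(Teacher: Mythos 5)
Your proof is correct, and it takes a genuinely different route from the paper. The paper also starts from the decomposition $S_n f(x)=S^Y_{N(n,x)}f_Y(x)+H(T^n x)$, but then compares $S^Y_{N(n,x)}f_Y$ to a Birkhoff sum with the \emph{deterministic} number of induced steps $n\mu(Y)$, writing $S_nf=S^Y_{n\mu(Y)}f_Y+E_n+F_n$. This forces the paper to control the random time change: the deviation of $N(n,x)$ from $n\mu(Y)$ is handled via a large-deviation argument (itself using Proposition~\ref{prop:BE_Y}), and the resulting fluctuation of $S^Y_k f_Y$ over windows of length $n^r$ is handled with a maximal inequality, quoting~\cite{gouezel_chazottes_TCLps}. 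Your observation --- that for the specific observable $f=1-1_Y/\mu(Y)$ one has the exact identity $S_nf(x)=n-N(n,x)/\mu(Y)$, so the level set $\{S_nf>sn^{1/q}\}$ is \emph{exactly} a level set of $S^Y_M f_Y$ for a number of steps $M=M(n,s)$ which is deterministic --- eliminates the stochastic time change entirely, and with it the need for the maximal-inequality step. The error is instead pushed into the threshold through the nonlinear change of variables $s\mapsto\tilde s$, producing the $O(s^2 n^{1/q-1})$ term, which is why you still need to split into a Berry--Esseen regime $\abs{s}\le n^{\delta_1}$ and a tail regime $\abs{s}>n^{\delta_1}$; the ingredients you list for the tail regime (inequality~\eqref{eq:weak_q} applied to $f$, which is Lipschitz since $1_Y$ is locally constant, plus $\mu_Y\le\mu/\mu(Y)$ and the stable tails of $Z$) are all legitimate. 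One small inaccuracy: balancing $q\delta_1$ against $1-1/q-2\delta_1$ gives $\delta_1=(q-1)/(q(q+2))$ and hence the exponent $\min\bigl((q-1)/(q+2),\epsilon/q\bigr)$, which is \emph{strictly larger} than the paper's $\min\bigl((q-1)/(1+2q^2),\epsilon/q\bigr)$ (since $(2q+1)(q-1)>0$ for $q>1$), not equal to it. So your argument in fact yields a slightly better Berry--Esseen exponent than the one stated; since the proposition only claims the weaker rate, this is not a defect, but you should not present the optimization as recovering the paper's $\delta$. The one trade-off to be aware of is that your argument is tied to the special structure of $f$, whereas the paper's decomposition into $E_n,F_n$ would adapt to more general Lipschitz observables; for the purposes of Proposition~\ref{prop:lower_stable}, only this particular $f$ is needed, so this costs nothing here.
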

\begin{proof}
For $x\in Y$, the Birkhoff sums $S_n f(x)$ and $S^Y_{n\mu(Y)} f_Y(x)$
should be close (since a return to $Y$ takes on average $1/\mu(Y)$
iterates of $T$, both sums involve roughly the same number of
iterations of $T$), and we know that $S^Y_{n\mu(Y)}
f_Y(x)/(n\mu(Y))^{1/q}$ is close to $Z_Y$ in distribution. (We write
$n\mu(Y)$ instead of its integer part for notational simplicity.) The
result follows if we can show that the different errors are suitably
small.

Define a function $H$ on $\Delta$ as follows: if $x$ is at height $i$
(i.e., it belongs to $\Delta_{\alpha,i}$ for some $\alpha$), let $\pi
x=T^{-i}x$ be its unique preimage in the basis, and let
$H(x)=\sum_{j=0}^{i-1}f(T^j \pi x)$. Let $N(n,x)$ denote the number
of returns to $Y$ of a point $x\in Y$ before time $n$. We get $S_n
f(x) = S_{N(n,x)}^Y f_Y(x) + H(T^n x)$. We expect $N(n,x)$ to be
close to $n \mu(Y)$, hence we decompose further as
  \begin{align*}
  S_n f(x) & = S^Y_{n\mu(Y)} f_Y(x) + (S_{N(n,x)}^Y f_Y(x) - S^Y_{n\mu(Y)} f_Y(x)) + H(T^n x)
  \\&
  = S^Y_{n\mu(Y)} f_Y(x) + E_n(x) + F_n(x).
  \end{align*}
Suppose that, for $u_n=n^{-\delta}$ for some $\delta\in (0,
\epsilon/q]$, we have
  \begin{equation}
  \label{eq:bounds_Prok}
  \mu_Y\{ \abs{E_n}/n^{1/q} > u_n\} \leq Cu_n,\quad \mu_Y\{\abs{F_n}/n^{1/q} > u_n\} \leq Cu_n.
  \end{equation}

We deduce from the above equation that
  \begin{equation*}
  \mu_Y\{S_n f/ n^{1/q} > s\} \leq \mu_Y\{S_{n\mu(Y)}^Y f_Y / n^{1/q} > s-2 u_n\} + 2Cu_n.
  \end{equation*}
By Proposition~\ref{prop:BE_Y}, this is bounded by
  \begin{equation*}
  \Pbb(\mu(Y)^{1/q} Z_Y > s-2u_n) + Cn^{-\epsilon/q} + 2Cu_n.
  \end{equation*}
As $Z_Y$ has a bounded density, the probability that
$\mu(Y)^{1/q}Z_Y$ belongs to the interval $[s-2u_n, s)$ is bounded by
$C u_n$. Finally, we obtain
  \begin{equation*}
  \mu_Y\{S_n f/ n^{1/q} > s\} \leq \Pbb(\mu(Y)^{1/q} Z_Y > s) + C n^{-\epsilon/q} + Cu_n.
  \end{equation*}
The lower bound is similar, and we obtain the conclusion of the
proposition.

It remains to prove~\eqref{eq:bounds_Prok}. We first deal with the
bound involving $F_n$. We have
  \begin{equation*}
  \mu(F_n \geq u_n n^{1/q}) =\mu(H\circ T^n \geq u_n n^{1/q}) =\mu(H \geq u_n n^{1/q}).
  \end{equation*}
The function $H$ can only be $\geq A$ on the set of points with
height at least $A$. The set of points with height $i$ has measure
$\tail_{i+1} \sim C i^{-q}$, hence $\mu(H\geq A) \leq C A^{-q+1}$. We
get
  \begin{equation*}
  \mu(F_n \geq u_n n^{1/q}) \leq C (u_n n^{1/q})^{-(q-1)}.
  \end{equation*}
This is  bounded by $Cu_n$ if $u_n=n^{-\delta}$ with $\delta\leq
(q-1)/q^2$.

We turn to $E_n$. Let $M_n=n^{r}$, for some $r \in (1/q, 1)$. We have
  \begin{equation*}
  \{E_n \geq u_n n^{1/q}\} \subset
   \{E_n(x) \geq u_n n^{1/q},\ \abs{N(n,x)-n\mu(Y)}< M_n \}
   \cup \{ \abs{N(n,x)-n\mu(Y)}\geq M_n \}.
   \end{equation*}
In the first set, as $S^Y_{N(n,x)}f_Y(x)$ and $S^Y_{n\mu(Y)}f_Y(x)$
are separated by $u_n n^{1/q}$, one of them is distant from
$S^Y_{n\mu(Y)-M_n} f_Y(x)$ by at least $u_n n^{1/q}/2$. Hence, the
first set is included in
  \begin{equation*}
  \left\{ \max_{0\leq k \leq 2M_n} \abs{S^Y_{n\mu(Y)-M_n+k} f_Y(x)
      -  S^Y_{n\mu(Y)-M_n} f_Y(x)} \geq u_n n^{1/q}/2\right\}.
  \end{equation*}
By the invariance of the measure $\mu_Y$ under $T_Y$, the measure of
this set is
  \begin{equation*}
  \mu_Y\left\{ \max_{0\leq k \leq 2M_n} \abs{S^Y_k f_Y} \geq u_n n^{1/q}/2\right\}.
  \end{equation*}
The sequence $S_i^Y f_Y/ i^{1/q}$ converges in distribution, but more
is true: It follows from~\cite[Lemma 7.1 and proof of Theorem
2.10]{gouezel_chazottes_TCLps} that this sequence remains bounded in
$L^1$, and that the weak $L^1$ norm of the corresponding maxima also
remain bounded. Hence, the above equation is bounded by $C
M_n^{1/q}/(u_n n^{1/q})$. This is bounded by $Cu_n$ if
$u_n=n^{-\delta}$ with $\delta \leq (1-r)/(2q)$.

Finally, if $\abs{N(n,x)-n\mu(Y)}\geq M_n$, then either $N(n,x) \geq
n\mu(Y)+M_n$, or $N(n,x) \leq n\mu(Y)-M_n$. In the first case,
$S^Y_{n\mu(Y)+M_n}\phi_Y \leq n$, i.e., $S^Y_{n\mu(Y)+M_n} f_Y \leq
-M_n/\mu(Y)$. By Proposition~\ref{prop:BE_Y}, this can only happen
with probability $\Pbb(Z_Y \leq -c M_n \mu(Y)/n^{1/q}) + C
n^{-\epsilon/q}$. The stable law $Z_Y$ has tails of order $q$, i.e.,
$\Pbb(\abs{Z_Y}>s)\leq Cs^{-q}$. Hence, this is bounded by
$u_n=n^{-\delta}$ if $\delta \leq \min(q(r-1/q), \epsilon/q)$. The
second case is handled similarly.

We have proved that, if $\delta$ is small enough,
then~\eqref{eq:bounds_Prok} holds. More specifically, we can choose
$r$ so that $q(r-1/q) = (1-r)/2q$, i.e., $r= (1+2q)/(1+2q^2)$. The
resulting constraints on $\delta$ are
  \begin{equation*}
  \delta \leq \min( (q-1)/q^2, (q-1)/(1+2q^2), \epsilon/q).
  \end{equation*}
The first constraint can be removed since it is implied by the second
one.
\end{proof}

We can now conclude the proof of Proposition~\ref{prop:lower_stable}.
The probability distribution $Z$ has heavy tails, since it is a
stable law of index $q$: there exists $c>0$ such that, for all $s\geq
1$, we have $\Pbb(Z>s) \geq c s^{-q}$. It follows from
Proposition~\ref{prop:BE_Delta} that $\mu_Y\{S_n f/n^{1/q} >s\} \geq
c s^{-q} - C n^{-\delta}$. This is $\geq c s^{-q}/2$ if $C
n^{-\delta} \leq c s^{-q}/2$, which holds for $s \in [1, n^r]$ if
$r<\delta/q$ and $n$ is large enough. In this range, it follows that
$\mu\{S_n f/n^{1/q} > s\} \geq c' s^{-q}$.

Using~\eqref{eq:deduce_moments} for the first equality, we have
  \begin{equation*}
  \int \abs{S_n f/n^{1/q}}^q \dd\mu
  =q\int_{s=0}^\infty s^{q-1} \mu\left\{\abs{S_n f/n^{1/q}}\geq s\right\} \dd s
  \geq q\int_{s=1}^{n^r} s^{q-1} c' s^{-q} \dd s
  =c'q r \log n.
  \end{equation*}
This is the desired lower bound. \qed

\medskip

One can also deduce from Proposition~\ref{prop:BE_Delta} a speed of
convergence towards the stable law $Z$ on the whole space
$(\Delta,\mu)$. Although this is not needed for
Proposition~\ref{prop:lower_stable}, we include it for completeness:
\begin{thm}
\label{thm:BE_Delta}
Let $\delta=\min( (q-1)/(1+2q^2), \epsilon/q)$. There exists $C>0$
such that for any $n>0$ and for any $s\in \R$,
  \begin{equation*}
  \abs*{ \mu\{ x\st S_n f(x)/n^{1/q} > s\} - \Pbb(Z > s)} \leq C n^{-\delta}.
  \end{equation*}
\end{thm}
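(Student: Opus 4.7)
The plan is to transfer the Berry-Esseen rate of Proposition~\ref{prop:BE_Delta} from $\mu_Y$ to the full invariant measure $\mu$ by unfolding the tower structure. Since every $x\in\Delta$ has a unique representation $x=T^iy$ with $y=\pi x\in Y$ and $0\leq i<\phi(y)$, one has for $A=\{S_nf/n^{1/q}>s\}$
\begin{equation*}
\mu(A)=\mu(Y)\int_Y\sum_{i=0}^{\phi(y)-1}\mathbf{1}_A(T^iy)\dd\mu_Y(y).
\end{equation*}
The cocycle identity $S_nf(T^iy)-S_nf(y)=S_if(T^ny)-S_if(y)$ together with $\|f\|_\infty\leq 1/\mu(Y)$ gives the key estimate $|S_nf(T^iy)-S_nf(y)|\leq Ci$. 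The range $|s|\geq n^{1-1/q}$ is handled separately: $\mu\{|S_nf|>sn^{1/q}\}=0$ while $\Pbb(Z>s)=O(|s|^{-q})=O(n^{-(q-1)})\leq Cn^{-\delta}$.

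Next I introduce a cutoff $M=n^r$ with $r\in(\delta/(q-1),1/q-\delta)$; this interval is nonempty since $\delta\leq(q-1)/(1+2q^2)<(q-1)/q^2$. Heights $i>M$ contribute at most $\mu(Y)\sum_{i>M}\mu_Y(\phi>i)=O(M^{-(q-1)})$ by Lemma~\ref{lem:karamata}. For $i\leq M$, one sandwiches $\mathbf{1}_A(T^iy)$ between $\mathbf{1}\{S_nf(y)>sn^{1/q}\mp CM\}$ and sums, yielding as upper/lower bounds
\begin{equation*}
\mu(Y)\int_Y(\phi\wedge M)\mathbf{1}\{S_nf(y)>sn^{1/q}\pm CM\}\dd\mu_Y(y)+O(M^{-(q-1)}).
\end{equation*}
It thus suffices to prove $\mu(Y)\int(\phi\wedge M)\mathbf{1}_B\dd\mu_Y=\Pbb(Z>s)+O(n^{-\delta})$ for the shifted event $B=\{S_nf>sn^{1/q}\pm CM\}$.

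The heart of the argument is this decorrelation estimate between the column-dependent weight $\phi\wedge M$ and the long-range event $B$. I would shift the event forward by $k_0$ returns: setting $\tau_{k_0}(y)=\sum_{j=0}^{k_0-1}\phi(T_Y^jy)$, the identity $S_nf(y)-S_nf(T_Y^{k_0}y)=S_{\tau_{k_0}}f(y)-S_{\tau_{k_0}}f(T^ny)$ gives an absolute value at most $C\tau_{k_0}$. On the good event $\{\max_{j<k_0}\phi(T_Y^jy)\leq M\}$, whose $\mu_Y$-complement has measure $O(k_0M^{-q})$, one has $\tau_{k_0}\leq k_0M$, so $\mathbf{1}_B(y)$ is sandwiched between $\mathbf{1}_{B'}\circ T_Y^{k_0}(y)$ for shifted events $B'$ with thresholds perturbed by $O(k_0M)$. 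Exponential decay of correlations for the uniformly expanding base map $T_Y$, applied to the Lipschitz function $\phi\wedge M$ (of Lipschitz norm $O(M)$) against the bounded $\mathbf{1}_{B'}$, gives
\begin{equation*}
\int(\phi\wedge M)\,\mathbf{1}_{B'}\circ T_Y^{k_0}\dd\mu_Y=\Bigl(\int(\phi\wedge M)\dd\mu_Y\Bigr)\mu_Y(B')+O(M\theta^{k_0})
\end{equation*}
for some $\theta\in(0,1)$. Combining with Kac's formula $\mu(Y)\int(\phi\wedge M)\dd\mu_Y=1-O(M^{-(q-1)})$, Proposition~\ref{prop:BE_Delta} (giving $\mu_Y(B')=\Pbb(Z>s)+O(n^{-\delta})+O(k_0M/n^{1/q})$, where the last error absorbs the threshold shift via the bounded density of $Z$), and the bad-event estimate, yields $\mu(A)=\Pbb(Z>s)+O(M^{-(q-1)}+k_0M/n^{1/q}+M\theta^{k_0}+k_0M^{-q}+n^{-\delta})$. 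Choosing $k_0=C\log n$ with $C$ large enough balances every term at $O(n^{-\delta})$. The main obstacle is precisely this decorrelation step: $\phi$ and $\{S_nf>sn^{1/q}\}$ are genuinely positively correlated since a long initial excursion inflates $S_nf$, so direct factorization fails; the shift-by-$k_0$ trick works because the shifted event becomes asymptotically independent of the initial column, at which point exponential mixing of $T_Y$ applies.
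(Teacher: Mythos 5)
Your proof is correct, but it takes a genuinely different route from the paper's. The paper decomposes $\Delta$ according to the \emph{forward} time $k=h_\alpha-i$ needed to enter $Y$: the pushforward of $\mu_{\alpha,i}=\mu\restr{\Delta_{\alpha,i}}/\mu(\Delta_{\alpha,i})$ under $T^{h_\alpha-i}$ is a measure on $Y$ with uniformly Lipschitz density, so Propositions~\ref{prop:BE_Y} and~\ref{prop:BE_Delta} apply verbatim to it (the spectral argument only needs a Lipschitz density), at the cost of an error $C(n-k)^{-\delta}+Ck/n^{1/q}$; one then sums over the sets $\Lambda_k$ using $\mu(\Lambda_k)\sim Ck^{-q}$ and splits at $k=n^{1/q}$. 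This sidesteps entirely the correlation problem you correctly identify: by waiting until the orbit hits $Y$, the initial segment is absorbed into a harmless $O(k/n^{1/q})$ shift and no decorrelation is needed. You instead pull \emph{back} to the base via the Kac decomposition, which forces you to integrate $(\phi\wedge M)\mathbf{1}_B$ against $\mu_Y$ with the weight and the event positively correlated, and you resolve this by shifting the event by $k_0\sim C\log n$ returns and invoking exponential decay of correlations of the Gibbs--Markov map $T_Y$ for Lipschitz against $L^\infty$ (which is legitimate: it follows from the spectral gap of $L$ that the appendix already uses, and $\phi\wedge M$ has Lipschitz norm $O(M)$ since $\phi$ is constant on partition elements). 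Your parameter bookkeeping checks out: the interval $(\delta/(q-1),\,1/q-\delta)$ for $r$ is nonempty precisely because $\delta<(q-1)/q^2$, and all error terms are $o(n^{-\delta})$ with the $\log n$ factors absorbed by the strict inequalities. Two cosmetic points: the bad-event contribution carries the weight $\phi\wedge M\leq M$, so it is $O(k_0M^{1-q})$ rather than $O(k_0M^{-q})$ (still negligible since $r(q-1)>\delta$); and the cutoff for the trivial range should read $|s|\geq Cn^{1-1/q}$ with $C\geq\norm{f}_{L^\infty}$. The paper's argument is shorter and avoids the mixing step; yours has the advantage of not requiring the observation that the Berry--Esseen bound on $Y$ persists for arbitrary Lipschitz initial densities, using only the single statement of Proposition~\ref{prop:BE_Delta} for $\mu_Y$ as a black box.
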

\begin{proof}
Consider a set $\Delta_{\alpha,i}$, with its renormalized probability
measure $\mu_{\alpha,i} =
\mu\restr{\Delta_{\alpha,i}}/\mu(\Delta_{\alpha,i})$. This measure is
sent by $T^{h_\alpha-i}_*$ to a measure on $Y$, which is equivalent
to $\mu_Y$, with a density bounded from above and from below, and
with uniformly bounded Lipschitz constant.
Proposition~\ref{prop:BE_Y} still works for this measure, with
uniform constants, since all we need to apply the spectral argument
is that the density is Lipschitz. It follows that
Proposition~\ref{prop:BE_Delta} also works for these measures. Adding
the additional error coming from the $h_\alpha-i$ first steps needed
to reach $Y$ (bounded by $(h_\alpha-i)/n^{1/q}$), we deduce: for
$n>h_\alpha-i$,
  \begin{equation*}
  \abs*{\mu_{\alpha,i}\{x\in \Delta_{\alpha,i} \st S_n f(x)/n^{1/q} > s\}
        - \Pbb(Z>s) }
  \leq C (n- (h_\alpha - i))^{-\delta} + C (h_\alpha-i)/n^{1/q}.
  \end{equation*}
Let $\Lambda_k$ denote the set of points in $\Delta$ that enter $Y$
after exactly $k$ steps. Multiplying the above inequality by
$\mu(\Delta_{\alpha,i})$ and summing over $(\alpha,i)$, we obtain:
  \begin{multline*}
  \abs*{\mu\{x\in \Delta \st S_n f(x)/n^{1/q} > s\} - \Pbb(Z>s)}
  \\
  \leq C \sum_{k<n} \mu(\Lambda_k) \min((n-k)^{-\delta} + k/n^{1/q}, 1) + \sum_{k\geq n} \mu(\Lambda_k).
  \end{multline*}
We have $\mu(\Lambda_k) = \tail_{k+1} \sim C k^{-q}$. Splitting the
above sum into $k\leq n^{1/q}$ and $k>n^{1/q}$, we get the bound
  \begin{equation*}
  C\sum_{k\leq n^{1/q}} k^{-q} (n^{-\delta} + k/n^{1/q}) + C\sum_{k> n^{1/q}} k^{-q}
  \leq C n^{-\delta} + C n^{(2-q)/q} / n^{1/q} + C n^{(1-q)/q}.
  \end{equation*}
This is bounded by $Cn^{-\delta'}$ for $\delta'=\min(\delta,
1-1/q)=\delta$.
\end{proof}

\bibliography{biblio}

\def\cprime{$'$}
\providecommand{\bysame}{\leavevmode\hbox to3em{\hrulefill}\thinspace}
\providecommand{\MR}{\relax\ifhmode\unskip\space\fi MR }
\providecommand{\MRhref}[2]{%
  \href{http://www.ams.org/mathscinet-getitem?mr=#1}{#2}
}
\providecommand{\href}[2]{#2}
\begin{thebibliography}{CESFZ08}

\bibitem[AD01]{aaronson_denker}
Jon Aaronson and Manfred Denker, \emph{Local limit theorems for partial sums of
  stationary sequences generated by {G}ibbs-{M}arkov maps}, Stoch. Dyn.
  \textbf{1} (2001), 193--237. \MR{MR1840194}

\bibitem[AHO03]{Armstead_etal03}
Douglas~N. Armstead, Brian~R. Hunt, and Edward Ott, \emph{Anomalous diffusion
  in infinite horizon billiards}, Phys. Rev. E (3) \textbf{67} (2003), 021110,
  7 pages. \MR{MR1974617}

\bibitem[ASW11]{astashkin_weak_lp_vonbahr}
Sergey Astashkin, Fedor Sukochev, and Chin~Pin Wong, \emph{Disjointification of
  martingale differences and conditionally independent random variables with
  some applications}, Studia Math. \textbf{205} (2011), 171--200.
  \MR{MR2824894}

\bibitem[BCD11]{BalintChernovDolgopyat11}
P{\'e}ter B{\'a}lint, Nikolai Chernov, and Dmitry Dolgopyat, \emph{Limit
  theorems for dispersing billiards with cusps}, Comm. Math. Phys. \textbf{308}
  (2011), 479--510. \MR{MR2851150}

\bibitem[BCD13]{Chernov_preprint}
\bysame, \emph{Convergence of moments for dispersing billiards with cusps},
  Preprint, 2013.

\bibitem[BG06]{balint_gouezel}
P{\'e}ter B{\'a}lint and S{\'e}bastien Gou{\"e}zel, \emph{Limit theorems in the
  stadium billiard}, Comm. Math. Phys. \textbf{263} (2006), 461--512.
  \MR{MR2207652}

\bibitem[Bra94]{braverman_rearrangement}
Michael~Sh. Braverman, \emph{Independent random variables and rearrangement
  invariant spaces}, London Mathematical Society Lecture Note Series, vol. 194,
  Cambridge University Press, Cambridge, 1994. \MR{MR1303591}

\bibitem[Bur73]{burkholder}
Donald~L. Burkholder, \emph{Distribution function inequalities for
  martingales}, Ann. Probability \textbf{1} (1973), 19--42. \MR{MR0365692}

\bibitem[CESFZ08]{Courbage_etal08}
M.~Courbage, M.~Edelman, S.~M. Saberi~Fathi, and G.~M. Zaslavsky, \emph{Problem
  of transport in billiards with infinite horizon}, Phys. Rev. E (3)
  \textbf{77} (2008), 036203, 5 pages. \MR{MR2495438}

\bibitem[CG07]{gouezel_chazottes_TCLps}
Jean-Ren{\'e} Chazottes and S{\'e}bastien Gou{\"e}zel, \emph{On almost-sure
  versions of classical limit theorems for dynamical systems}, Probab. Theory
  Related Fields \textbf{138} (2007), 195--234. \MR{MR2288069}

\bibitem[CG12]{gouezel_chazottes_concentration}
\bysame, \emph{Optimal concentration inequalities for dynamical systems}, Comm.
  Math. Phys. \textbf{316} (2012), 843--889. \MR{MR2993935}

\bibitem[DM14]{dedecker_merlevede_moment}
J{\'e}r{\^o}me Dedecker and Florence Merlev{\`e}de, \emph{Moment bounds for
  dependent sequences in smooth banach spaces}, Preprint, 2014.

\bibitem[Fel66]{feller_2}
William Feller, \emph{An introduction to probability theory and its
  applications. {V}ol. {II}}, John Wiley \& Sons Inc., New York, 1966.
  \MR{MR0210154}

\bibitem[Gou04a]{gouezel_stable}
S{\'e}bastien Gou{\"e}zel, \emph{Central limit theorem and stable laws for
  intermittent maps}, Probab. Theory Related Fields \textbf{128} (2004),
  82--122. \MR{MR2027296}

\bibitem[Gou04b]{gouezel_decay}
\bysame, \emph{Sharp polynomial estimates for the decay of correlations},
  Israel J. Math. \textbf{139} (2004), 29--65. \MR{MR2041223}

\bibitem[Gou04c]{gouezel_these}
\bysame, \emph{Vitesse de d{\'e}corr{\'e}lation et th{\'e}or{\`e}mes limites
  pour les applications non uniform{\'e}ment dilatantes}, Ph.D. thesis,
  Universit{\'e} Paris Sud, 2004.

\bibitem[Gou13]{gouezel_lausanne}
\bysame, \emph{Limit theorems in dynamical systems using the spectral method},
  preprint, 2013.

\bibitem[Kat66]{kato_pe}
Tosio Kato, \emph{Perturbation theory for linear operators}, Die Grundlehren
  der mathematischen Wissenschaften, Band 132, Springer-Verlag New York, Inc.,
  New York, 1966. \MR{MR0203473}

\bibitem[LSV99]{liverani_saussol_vaienti}
Carlangelo Liverani, Beno{\^{\i}}t Saussol, and Sandro Vaienti, \emph{A
  probabilistic approach to intermittency}, Ergodic Theory Dynam. Systems
  \textbf{19} (1999), 671--685. \MR{MR1695915}

\bibitem[Mel09]{M09b}
Ian Melbourne, \emph{Large and moderate deviations for slowly mixing dynamical
  systems}, Proc. Amer. Math. Soc. \textbf{137} (2009), 1735--1741.
  \MR{MR2470832}

\bibitem[MN08]{melbourne_nicol_large_deviations}
Ian Melbourne and Matthew Nicol, \emph{Large deviations for nonuniformly
  hyperbolic systems}, Trans. Amer. Math. Soc. \textbf{360} (2008), 6661--6676.
  \MR{MR2434305}

\bibitem[MT04]{melbourne_torok}
Ian Melbourne and Andrew T{\"o}r{\"o}k, \emph{Statistical limit theorems for
  suspension flows}, Israel J. Math. \textbf{144} (2004), 191--209.
  \MR{MR2121540}

\bibitem[MT12a]{melbourne_terhesiu}
Ian Melbourne and Dalia Terhesiu, \emph{Operator renewal theory and mixing
  rates for dynamical systems with infinite measure}, Invent. Math.
  \textbf{189} (2012), 61--110. \MR{MR2929083}

\bibitem[MT12b]{MTorok12}
Ian Melbourne and Andrei T{\"o}r{\"o}k, \emph{Convergence of moments for
  {A}xiom {A} and non-uniformly hyperbolic flows}, Ergodic Theory Dynam.
  Systems \textbf{32} (2012), 1091--1100. \MR{MR2995657}

\bibitem[PM80]{pomeau_manneville}
Yves Pomeau and Paul Manneville, \emph{Intermittent transition to turbulence in
  dissipative dynamical systems}, Comm. Math. Phys. \textbf{74} (1980),
  189--197. \MR{MR576270}

\bibitem[Sar02]{sarig_decay}
Omri Sarig, \emph{Subexponential decay of correlations}, Invent. Math.
  \textbf{150} (2002), 629--653. \MR{MR1946554}

\bibitem[SV07]{szasz_varju_infinite}
Domokos Sz{\'a}sz and Tam{\'a}s Varj{\'u}, \emph{Limit laws and recurrence for
  the planar {L}orentz process with infinite horizon}, J. Stat. Phys.
  \textbf{129} (2007), 59--80. \MR{MR2349520}

\bibitem[SW71]{stein_weiss_fourier}
Elias~M. Stein and Guido Weiss, \emph{Introduction to {F}ourier analysis on
  {E}uclidean spaces}, Princeton University Press, Princeton, N.J., 1971,
  Princeton Mathematical Series, No. 32. \MR{MR0304972}

\bibitem[vBE65]{vonbahr_esseen}
Bengt von Bahr and Carl-Gustav Esseen, \emph{Inequalities for the {$r$}th
  absolute moment of a sum of random variables, {$1\leq r\leq 2$}}, Ann. Math.
  Statist \textbf{36} (1965), 299--303. \MR{MR0170407}

\bibitem[You98]{lsyoung_annals}
Lai-Sang Young, \emph{Statistical properties of dynamical systems with some
  hyperbolicity}, Ann. of Math. (2) \textbf{147} (1998), 585--650.
  \MR{MR1637655}

\bibitem[You99]{lsyoung_recurrence}
\bysame, \emph{Recurrence times and rates of mixing}, Israel J. Math.
  \textbf{110} (1999), 153--188. \MR{MR1750438}

\end{thebibliography}
\bibliographystyle{amsalpha}
\end{document}